\newtheorem{theorem}{Theorem}[section]
\newtheorem{proposition}[theorem]{Proposition}
\newtheorem{corollary}[theorem]{Corollary}
\newtheorem{lemma}[theorem]{Lemma}
\newtheorem*{theorem-non}{Theorem}
\newtheorem{problemintro}{Problem}
\newtheorem{thmintro}{Theorem}
\theoremstyle{definition}
\newtheorem{definition}[theorem]{Definition}
\newtheorem{example}[theorem]{Example}
\newtheorem{remark}[theorem]{Remark}
\newtheorem*{nota-non}{Notation}
\newcommand{\Z}{\mathbb{Z}}
\newcommand{\C}{\mathbb{C}}
\newcommand{\T}{\mathbb{T}}
\newcommand{\norm}[1]{ \left\| #1 \right\| }
\newcommand{\setbuilder}[2] { \left\{ #1 \enskip \middle| \enskip #2 \right\} }
\newcommand{\card}[1]{\left| #1 \right|}
\newcommand{\set}[1]{\left\{#1\right\}}
\newcommand{\isoto}{\cong}
\newcommand{\surjectsonto}{\twoheadrightarrow}
\newcommand{\tensor}{\otimes}
\newcommand{\B}{\mathrm{B}}
\newcommand{\rC}{\mathrm{C}}
\newcommand{\ca}{\mathrm{C}^*}
\newcommand{\rZ}{\mathrm{Z}}
\DeclareMathOperator{\id}{id}
\DeclareMathOperator{\Ad}{Ad}
\title[Intermediate subalgebras for reduced crossed products]{Intermediate subalgebras for reduced crossed products of discrete groups}
\author{Matthew Kennedy}
\address{Department of Pure Mathematics, University of Waterloo \\ 200 University Avenue West \\ Waterloo, Ontario, N2L 3G1 \\ Canada}
\email{matt.kennedy@uwaterloo.ca}
\author{Dan Ursu}
\address{Mathematisches Institut, Fachbereich Mathematik und Informatik, Universit{\"a}t M{\"u}nster \\ Einsteinstrasse 62 \\ 48149 M{\"u}nster \\ Germany}
\email{dursu@uni-muenster.de}
\subjclass[2020]{46L55, 47L65}
\keywords{C*-algebra, group action, partial dynamical system, crossed product, intermediate subalgebra}
\thanks{The first author acknowledges the support of the Natural Sciences and Engineering Research Council of Canada (NSERC). The second author was funded by the Deutsche Forschungsgemeinschaft (DFG, German Research Foundation) -- Project-ID 427320536 -- SFB 1442, as well as under Germany's Excellence Strategy EXC 2044 390685587, Mathematics M{\"u}nster: Dynamics--Geometry--Structure, and by the ERC Advanced Grant 834267 -- AMAREC}
\begin{document}
    \begin{abstract}
         Let $\alpha : \Gamma \curvearrowright A$ be an action of a discrete group $\Gamma$ on a unital C*-algebra $A$ by *-automorphisms and let $A \rtimes_{\alpha,\lambda} \Gamma$ denote the corresponding reduced crossed product C*-algebra. Assuming that $\Gamma$ satisfies the approximation property, we establish a sufficient and (almost always) necessary condition on the action $\alpha$ for the existence of a Galois correspondence between intermediate C*-algebras for the inclusion $A \subseteq A \rtimes_{\alpha,\lambda} \Gamma$ and partial subactions of $\alpha$. This condition, which we refer to as pointwise residual proper outerness, is a natural noncommutative generalization of freeness.
    \end{abstract}
 
	\maketitle
	
	\tableofcontents
	
	\section{Introduction}
    \label{sec:introduction}
    
    Many interesting and important problems about operator algebras can be expressed in terms of the structure of intermediate operator algebras for an inclusion $A \subseteq B$ of two relatively well understood operator algebras $A$ and $B$. An intermediate operator algebra for this inclusion is an operator algebra $C$ satisfying
    \[ A \subseteq C \subseteq B. \]
    In this paper, we will obtain definitive results on one of the most well-studied variants of this problem, which arises from the study of group actions on C*-algebras.
    
    Let $\alpha : \Gamma \curvearrowright A$ be an action of a discrete group $\Gamma$ on a unital C*-algebra $A$ by *-automorphisms. We may construct the corresponding reduced crossed product C*-algebra $A \rtimes_{\alpha,\lambda} \Gamma$, which is generated by $A$ and a regular representation of $\Gamma$, and encodes the action $\alpha$. This gives rise to the inclusion of C*-algebras
    \[ A \subseteq A \rtimes_{\alpha,\lambda} \Gamma,\]
    which is the main subject of consideration in this paper.

    One way to obtain an intermediate C*-algebra for this inclusion is to consider a restriction $\alpha|_\Lambda : \Lambda \curvearrowright A$ of the action $\alpha$ to a subgroup $\Lambda \subseteq \Gamma$. The corresponding reduced crossed product $A \rtimes_{\alpha|_{\Lambda}, \lambda} \Lambda$ satisfies
    \[ A \subseteq A \rtimes_{\alpha|_{\Lambda}, \lambda} \Lambda \subseteq A \rtimes_{\alpha,\lambda} \Gamma. \]

    More generally, an intermediate C*-algebra for the inclusion can be obtained from a \emph{partial subaction} of the action $\alpha$. This is a partial action $\beta : \Gamma \curvearrowright A$ of $\Gamma$ on $A$ by partial *-automorphisms in the sense of e.g. \cite{exel_partial_dynamical_systems}, with the additional requirement that each partial *-automorphism $\beta_g$ is a restriction of the *-automorphism $\alpha_g$. A canonical copy of the corresponding reduced crossed product $A \rtimes_{\beta,\lambda} \Gamma$ satisfies
    \[ A \subseteq A \rtimes_{\beta,\lambda} \Gamma \subseteq A \rtimes_{\alpha,\lambda} \Gamma. \]
    It is natural to ask when every intermediate C*-algebra has this form, which is equivalent to the existence of a Galois correspondence between intermediate subalgebras and partial subactions.

    \begin{problemintro} \label{problemintro:main}
        Let $\alpha : \Gamma \curvearrowright A$ be an action of a discrete group $\Gamma$ on a unital C*-algebra $A$ by *-automorphisms. Find necessary and sufficient conditions so that every intermediate C*-algebra for the inclusion $A \subseteq A \rtimes_{\alpha,\lambda} \Gamma$ is the reduced crossed product of a partial subaction of $\alpha$.
    \end{problemintro}

    We will provide an (essentially) complete solution to Problem \ref{problemintro:main}. Specifically, we will establish a sufficient condition for the existence of a Galois correspondence between intermediate subalgebras and partial subactions. We will then prove that this condition is almost always necessary, in a precise sense. 
    
    Before stating our results, we briefly pause to discuss the history surrounding this problem. Since there is an enormous body of related literature dating back at least 45 years, we will highlight only the most relevant results.
    
    In the commutative setting when $A$ is a C*-algebra $\rC(X)$ of continuous functions on a compact Hausdorff space $X$, the best result was obtained by Brown, Exel, Fuller, Pitts, and Reznikoff in \cite[Corollary~5.8]{befpr_groupoid_intermediate}. Their result asserts that if $\alpha : \Gamma \curvearrowright \rC(X)$ is a free action of a discrete group with the approximation property on a unital commutative C*-algebra $\rC(X)$, then every intermediate C*-algebra for the inclusion $\rC(X) \subseteq \rC(X) \rtimes_{\alpha,\lambda} \Gamma$ is the reduced crossed product of a partial subaction of $\alpha$.

    In the noncommutative setting, the best result was obtained by Cameron and Smith \cite{cameron_smith_cstar_simple}. They prove that if $\alpha : \Gamma \curvearrowright A$ is an outer action of a discrete group on a unital simple C*-algebra $A$, then every intermediate C*-algebra for the inclusion $A \subseteq A \rtimes_{\alpha,\lambda} \Gamma$ is the reduced crossed product of a restriction of the action $\alpha$ to a subgroup of $\Gamma$. This extended previous results of Landstad, Olesen and Pedersen \cite[Theorem~3.1]{LOP1978} for the case when $\Gamma$ is abelian and Izumi \cite[Corollary~6.6]{Izu2002} for the case when $\Gamma$ is finite.

    The von Neumann-algebraic version of Problem \ref{problemintro:main} has also received extensive consideration. Let $\alpha : \Gamma \curvearrowright M$ be an action of a discrete group $\Gamma$ on a von Neumann algebra $M$ by *-automorphisms. Constructing the von Neumann algebra crossed product $M \overline{\rtimes}_\alpha \Gamma$ gives rise to the inclusion of von Neumann algebras $M \subseteq M \overline{\rtimes}_\alpha \Gamma$, and one can pose the analogue of Problem \ref{problemintro:main}. In this setting, the best results were obtained by Cameron and Smith \cite{cameron_smith_vn_arbitrary}. Under the assumption that $\Gamma$ has the approximation property and the action $\alpha$ is properly outer, they obtain a characterization of intermediate von Neumann algebras for the inclusion that is analogous to the desired characterization in Problem~\ref{problemintro:main}. This extended previous results of Cameron and Smith \cite[Theorem~4.4]{cameron_smith_vn_factor} for the case when $M$ is a factor, Izumi, Longo and Popa \cite[Theorem~3.13]{ILP1998} for the case when $M$ is a separable factor, and Choda \cite{Cho1978} for the case when $M$ is a separable factor and each intermediate von Neumann algebra is the range of a faithful normal conditional expectation.


    The above results suggested to us that a solution to Problem \ref{problemintro:main} would require the action $\alpha : \Gamma \curvearrowright A$ to satisfy some generalized notion of freeness for group actions on noncommutative C*-algebras. There are a number of such definitions in the literature, each with different strengths and weaknesses (see e.g.\ \cite{kwasniewski_meyer_aperiodicity}). For example, spectral freeness, outerness and proper outerness, to name just a few. One can also consider residual versions of these properties, which means that they pass to quotients by invariant ideals. 

    The main result in \cite{cameron_smith_cstar_simple} requires $\alpha$ to be outer. However, it also requires the simplicity of $A$, and it is known that for simple C*-algebras, many of the above definitions coincide. For non-simple C*-algebras, where this is no longer the case, the situation is much more complicated. Since Problem~\ref{problemintro:main} asks for both necessary and sufficient conditions, settling on the correct definition presented a major challenge.

    It turns out that the correct definition is something we call \emph{pointwise residual proper outerness}, which means that $\alpha_g$ is residually properly outer for every $g \in \Gamma \setminus \{e\}$. If $A$ is commutative, then pointwise residual proper outerness is equivalent to the freeness of the induced action on the spectrum of $A$. See Section~\ref{sec:properly_outer} for details.

    We are now able to state our first main result, which establishes sufficient conditions for a Galois correspondence between intermediate C*-algebras and partial subactions. We prove this result in Section~\ref{sec:free_action_correspondence}. 

    \begin{thmintro}
    \label{thmintro:main}
        Let $\alpha : \Gamma \curvearrowright A$ be an action of a discrete group $\Gamma$ with the approximation property on a unital C*-algebra $A$ by *-automorphisms. If either $\card{\Gamma} \leq 2$ or $\alpha$ is pointwise residually properly outer, then every intermediate C*-algebra for the inclusion $A \subseteq A \rtimes_{\alpha,\lambda} \Gamma$ is the reduced crossed product of a partial subaction of $\alpha$.
    \end{thmintro}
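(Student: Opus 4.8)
The plan is to prove that every intermediate C*-algebra $C$, with $A \subseteq C \subseteq A \rtimes_{\alpha,\lambda} \Gamma$, is \emph{$\Gamma$-graded} by the family of ideals it determines, and then to recognize this grading as a partial subaction. Let $E \colon A \rtimes_{\alpha,\lambda}\Gamma \to A$ be the canonical faithful conditional expectation, let $(u_g)_{g \in \Gamma}$ be the canonical unitaries, and for $g \in \Gamma$ write $E_g(x) \defeq E(x u_g^*)$ for the $g$-th Fourier coefficient, so that $x$ is determined by $(E_g(x))_{g}$. Put $J_g \defeq \setbuilder{a \in A}{a u_g \in C}$. The whole statement reduces to two claims: \textbf{(grading)} $E_g(x) u_g \in C$ for all $x \in C$ and $g \in \Gamma$; and \textbf{(reconstruction)} $C = \overline{\operatorname{span}}\,\setbuilder{a u_g}{g \in \Gamma,\ a \in J_g}$. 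Everything after grading is bookkeeping, so grading is the real content.

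The reconstruction step and a reduction to the diagonal are where the approximation property is used. The AP furnishes a net $(m_i)$ of finitely supported functions on $\Gamma$, uniformly bounded as Herz--Schur multipliers, with $m_i \to 1$ pointwise; the associated maps $M_{m_i}(x) = \sum_g m_i(g) E_g(x) u_g$ are then uniformly completely bounded and, since $M_{m_i} \to \id$ in norm on the dense $*$-subalgebra of finitely supported elements, converge to $\id$ in point-norm on all of $A \rtimes_{\alpha,\lambda}\Gamma$. Granting grading, each $M_{m_i}(x)$ lies in the closed span on the right-hand side of (reconstruction), and taking the norm limit gives $C \subseteq \overline{\operatorname{span}}\,\set{a u_g}$; the reverse inclusion is immediate. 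This also handles $\card\Gamma \le 2$ with no outerness hypothesis: here grading is automatic, because $E(x) = E_e(x) \in A \subseteq C$ forces $x - E(x) \in C$, and when $\Gamma = \set{e,g}$ this difference is exactly the single off-diagonal mode $E_g(x) u_g$.

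The crux, and the step I expect to be the main obstacle, is grading when $\card\Gamma \ge 3$: removing the diagonal leaves several off-diagonal modes that must be disentangled. The strategy is to approximate the $g$-th Fourier projection $P_g \colon x \mapsto E_g(x) u_g$, on each fixed $x$, by finite sums of the form $\Phi(x) = \sum_i a_i\, x\, \alpha_{g^{-1}}(a_i^*)$ with $a_i \in A$. Since $a_i, \alpha_{g^{-1}}(a_i^*) \in A \subseteq C$ and $C$ is a C*-algebra, each $\Phi$ preserves $C$, so it suffices to arrange $\Phi(x) \to E_g(x) u_g$. Writing out modes, $\Phi(x) = \sum_{s} \big(\sum_i a_i E_s(x)\, \alpha_{sg^{-1}}(a_i^*)\big) u_s$; the $g$-mode is $\sum_i a_i E_g(x) a_i^*$, while for $s \ne g$ the exponent $sg^{-1}$ is nontrivial. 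One thus wants a net $(a_i)$ that is an asymptotically central approximate unit -- so that $\sum_i a_i E_g(x) a_i^* \to E_g(x)$ -- and is at the same time so ``spread out'' by the action that $\sum_i a_i E_s(x)\,\alpha_{sg^{-1}}(a_i^*) \to 0$ for every $s \ne g$. Reconciling these two requirements is the heart of the matter, and it is exactly here that pointwise proper outerness enters, through a Kishimoto-type selection argument producing such spread-out partitions of unity. The \emph{residual} strength of the hypothesis is indispensable: the coefficient $E_g(x)$ may be nonzero only modulo an $\alpha$-invariant ideal $I$, forcing the selection to be carried out in $(A/I) \rtimes_{\alpha,\lambda}\Gamma$, and pointwise residual proper outerness is precisely the condition guaranteeing Kishimoto's construction survives every such quotient.

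Once grading holds, the partial subaction is read off routinely. The identities $E_g(ax) = a E_g(x)$ and $E_g(xa) = E_g(x)\alpha_g(a)$ show each $J_g$ is a closed two-sided ideal of $A$ with $J_e = A$ and $\alpha_{g^{-1}}(J_g) = J_{g^{-1}}$, so $\beta_g \defeq \alpha_g|_{\alpha_{g^{-1}}(J_g)}$ is a partial $*$-automorphism with domain $\alpha_{g^{-1}}(J_g)$, range $J_g$, and restricting $\alpha_g$. The partial-action compatibility among the ideals $D_g \defeq J_g$ follows from multiplicativity of $C$: for $a \in J_g$ and $b \in J_h$ one has $a u_g \cdot b u_h = a\alpha_g(b)\, u_{gh} \in C$, hence $a\alpha_g(b) \in J_{gh}$, which encodes $\beta_g(D_{g^{-1}} \cap D_h) = D_g \cap D_{gh}$. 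Finally, (reconstruction) identifies $C$ with the canonical copy of $A \rtimes_{\beta,\lambda}\Gamma$ inside $A \rtimes_{\alpha,\lambda}\Gamma$, completing the proof.
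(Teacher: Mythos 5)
Your overall architecture---establish the grading $E_g(x)\lambda_g \in C$, reconstruct $C$ from its Fourier ideals via the approximation property, and read off the partial subaction---coincides with the paper's, and the peripheral steps (the $\card{\Gamma}\le 2$ case, the use of the approximation property for reconstruction as in Proposition~\ref{prop:ap_crossed_products}, and the verification that the ideals $J_g$ assemble into a partial subaction) are essentially as in the paper and are fine. The problem is that you have correctly located the entire difficulty in the grading step and then discharged it by fiat. The ``Kishimoto-type selection argument producing spread-out partitions of unity''---a net of families $(a_i)$ with $\sum_i a_i c a_i^* \to c$ for the fixed coefficient $c = E_g(x)$ while simultaneously $\sum_i a_i d\,\alpha_{sg^{-1}}(a_i^*) \to 0$ for every $s \ne g$ in the support---is not a citable off-the-shelf result for an arbitrary unital C*-algebra. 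Kishimoto-type lemmas available under proper outerness (aperiodicity) produce a single positive contraction $a$ with $\norm{aca} \ge \norm{c} - \varepsilon$ and $\norm{a d\,\alpha_h(a)} < \varepsilon$; this preserves norms but does not approximately reproduce the element $c$ itself, and upgrading it to an approximate partition of unity that asymptotically commutes with $c$ is precisely the obstruction that makes the problem hard outside the commutative and simple settings (where your argument does work, via genuine partitions of unity or Dixmier-type averaging). As written, the central claim of your proof is unsupported, and it is essentially equivalent in strength to the theorem's main technical content.

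For contrast, the paper handles this step non-constructively. It works with general absolute $A$-convex combinations $\sum_i a_i x b_i$ (two independent coefficient families, not the symmetric form $b_i = \alpha_{g^{-1}}(a_i^*)$) and proves in Lemma~\ref{lem:average_one_element_to_expectation} that $b_e$ lies in the closed absolute $A$-convex hull of $b = b_e + b_g\lambda_g$ by contradiction: if it did not, a refinement (Proposition~\ref{prop:starhom_separation}) of Magajna's bimodule separation theorem would yield a completely bounded $A$-bimodule map restricting to a $*$-homomorphism on $A$, and pushing its value at $\lambda_g$ into the injective envelope of a quotient produces a unitary implementing $\alpha_g$ on some invariant quotient $A/J$, contradicting residual proper outerness. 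The general element is then handled by iterating one group element at a time (using contractivity of absolute $A$-convex combinations) and a translation trick, rather than by a single simultaneous partition of unity. To salvage your route you would need to actually prove the existence of your spread-out, asymptotically central partitions of unity from pointwise residual proper outerness---which would be a new and strictly stronger averaging statement---or fall back on a separation argument of the paper's kind.
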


    The requirement that $\Gamma$ has the approximation property is standard in this setting, and is generally required whenever $A$ is not assumed to be simple. We address this point in detail in Section~\ref{sec:approximation_property}. 

    The key technical step in the proof of Theorem~\ref{thmintro:main} is an averaging result to show that if $B$ is an intermediate C*-algebra for the inclusion $A \subseteq A \rtimes_{\alpha,\lambda} \Gamma$ and $b \in B$ has Fourier series $b \sim \sum_{g \in \Gamma} b_g \lambda_g$, then $b_g \lambda_g \in B$ for all $g \in \Gamma$.
    
    Our approach to the proof of this result is motivated by the boundary-theoretic techniques that have recently been developed to study the ideal structure of group C*-algebras and crossed products (see e.g.\ \cite{KK2017,BKKO2017,K2020,KS2019,GU2023}). We also make use of techniques from noncommutative convexity, in the form of a separation result for C*-convex sets that refines a separation result of Magajna \cite{magajna_bimodules}.

    A similar averaging result is proved using much different methods in the more restricted setting of \cite{cameron_smith_cstar_simple}. Their argument utilizes von Neumann-algebraic techniques applied to the bidual of the C*-algebra, and requires an intricate use of disintegration theory.

    Our second main result is a (nearly) complete converse to Theorem \ref{thmintro:main}. Together with Theorem \ref{thmintro:main}, it implies that for most groups, pointwise residual outerness of the action is both necessary and sufficient for every intermediate subalgebra to be the reduced crossed product of a partial subaction. This result is proved in Section \ref{sec:counterexample_constructions}.

    \begin{thmintro}
    \label{thmintro:converse}
        Let $\alpha : \Gamma \curvearrowright A$ be an action of a discrete group $\Gamma$ with $|\Gamma| \geq 3$ on a unital C*-algebra $A$ by *-automorphisms. For an ideal $K \triangleleft A$, let $\Gamma_K$ denote the corresponding rigid stabilizer, i.e.
        \[
            \Gamma_K = \{g \in \Gamma : \alpha_g(K) = K \text{ and } \alpha_g \text{ is quasi-inner on } A/K \}.
        \]
        If $\card{\Gamma_K} \geq 3$ for a proper ideal $K \triangleleft A$, then there is an intermediate C*-algebra for the inclusion $A \subseteq A \rtimes_{\alpha,\lambda} \Gamma$ that is not the reduced crossed product of a partial subaction of $\alpha$.
    \end{thmintro}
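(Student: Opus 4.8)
The plan is to build the required counterexample by hand, modeled on the smallest instance of the obstruction. For the trivial action of $\Z/3 = \{e,g,g^2\}$ on $\C$ one has the intermediate subalgebra
\[ \C \subseteq \C \cdot 1 + \C(\lambda_g + \lambda_{g^2}) \subseteq \ca_\lambda(\Z/3), \]
which fails to be closed under the Fourier truncations $b \sim \sum_h b_h \lambda_h \mapsto b_h \lambda_h$ (the truncation $\lambda_g$ of $\lambda_g + \lambda_{g^2}$ is not in it), and which therefore, since the reduced crossed product of a partial subaction is by construction closed under Fourier truncation, is not of that form. My entire goal is to reproduce such a \emph{symmetric but non-Fourier-closed} intermediate subalgebra inside $A \rtimes_{\alpha,\lambda} \Gamma$; the hypotheses $\card{\Gamma_K} \geq 3$ and quasi-innerness are exactly what is needed to do so.

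First I would reduce. Since $\Gamma_K$ is a subgroup of $\Gamma$, I may fix a subgroup $H \leq \Gamma_K$ that is either infinite cyclic, finite cyclic of order at least $3$, or isomorphic to $\Z/2 \times \Z/2$; the hypothesis $\card{\Gamma_K} \geq 3$ guarantees one of these occurs. As the restricted reduced crossed product is an intermediate C*-algebra $A \subseteq A \rtimes_{\alpha|_H,\lambda} H \subseteq A \rtimes_{\alpha,\lambda} \Gamma$, it suffices to produce the counterexample inside it. Because $K$ is $H$-invariant, there is a canonical surjection $q : A \rtimes_{\alpha|_H,\lambda} H \to (A/K) \rtimes_{\bar\alpha,\lambda} H$ of reduced crossed products intertwining the Fourier truncations, $q(b_h \lambda_h) = \overline{b_h}\,\lambda_h$, where $\overline{b_h}$ denotes the image of $b_h$ in $A/K$. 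Hence it is enough to construct an intermediate C*-algebra $A/K \subseteq \mathcal{B} \subseteq (A/K) \rtimes_{\bar\alpha,\lambda} H$ and an element $y \in \mathcal{B}$ with $y_{h_0}\lambda_{h_0} \notin \mathcal{B}$ for some $h_0$: then $B \defeq q^{-1}(\mathcal{B})$ is intermediate for $A \subseteq A \rtimes_{\alpha,\lambda}\Gamma$, any lift $b \in B$ of $y$ satisfies $b_{h_0}\lambda_{h_0} \notin B$ (apply $q$ and use that $q$ intertwines truncation), and so $B$ is not the reduced crossed product of a partial subaction. This reduces everything to the case $K = 0$ with $\bar\alpha = \alpha$ pointwise quasi-inner on $D \defeq A/K$ and $H$ of the three special forms.

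The construction of $\mathcal{B}$ is the heart of the matter, and the role of quasi-innerness is transparent in the \emph{inner} model case. Recall from Section~\ref{sec:properly_outer} that quasi-innerness of $\alpha_h$ means $\alpha_h = \Ad(u_h)$ for a unitary $u_h$ in the bidual $D^{**}$; the elements $v_h \defeq u_h^* \lambda_h$ then commute with $D$. Suppose for the moment that the $u_h$ can be taken inside $D$ and chosen multiplicatively, so that the $v_h$ are honest unitaries in $D \rtimes_{\alpha,\lambda} H$ commuting with $D$ and implementing a copy of the reduced group algebra of $H$. For $H \isoto \Z/2 \times \Z/2$ put $y \defeq \sum_{h \neq e} v_h$; a direct computation using $v_g v_h = v_{gh}$ gives $y^* = y$ and
\[ y^2 = (\card{H}-1)\cdot 1 + (\card{H}-2)\, y, \]
so that $\mathcal{B} \defeq D + D\,y$ is an intermediate C*-algebra. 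It is not Fourier-closed: the truncation $y_{h_0}\lambda_{h_0} = v_{h_0}$ of $y$ cannot equal $d_0 + d_1 y$ for $d_0, d_1 \in D$, because comparing coefficients of two distinct $v_h$ with $h \neq e$ (available since $\card{H} \geq 3$) would force $1 = 0$ in $D$. For $H$ cyclic one instead takes $y \defeq v_t + v_{t^{-1}}$ for a generator $t$, and $\mathcal{B} = D + D\,y$ is the fixed-point picture of the flip $v_h \mapsto v_{h^{-1}}$; the same coefficient comparison shows $v_t \notin \mathcal{B}$.

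The hard part will be the descent from this model to the genuine quasi-inner case. In general the implementing unitaries $u_h$ lie only in the bidual $D^{**}$ and need not be multiplicative, so that both $v_h = u_h^* \lambda_h$ and the resulting tensor/flip picture live in the von Neumann crossed product rather than in the C*-algebra $D \rtimes_{\bar\alpha,\lambda} H$. One cannot na\"ively intersect the von Neumann-level subalgebra with the C*-crossed product: the condition for an honest element $b = \sum b_h \lambda_h$ to lie in it forces products such as $b_{t}\, u_t u_{t^{-1}}^*$ to belong to $D$, which can fail for every nonzero $b_t$, collapsing the intersection onto $D$. Overcoming this --- realizing a genuinely intermediate C*-subalgebra that still fails Fourier closure --- is where the precise strength of quasi-innerness must be used, and I would expect to do so either by a measurable/disintegration argument in the spirit of Cameron and Smith, or by first passing to a nonzero $\alpha$-invariant hereditary subalgebra of $D$ on which the unitaries $u_h$ (and the $2$-cocycle $u_g u_h u_{gh}^*$) can be realized within the algebra, applying the model construction there, and extending back. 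Granting this, the reduction above converts $\mathcal{B}$ into the desired intermediate C*-algebra $B \subseteq A \rtimes_{\alpha,\lambda}\Gamma$ that is not the reduced crossed product of a partial subaction.
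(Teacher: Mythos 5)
Your global strategy coincides with the paper's: reduce via the rigid stabilizer to a subgroup $H$ that is cyclic of order at least $3$, infinite cyclic, or $\Z/2\Z\times\Z/2\Z$ acting quasi-innerly on $A/K$, and then exhibit a ``symmetric but non-Fourier-closed'' intermediate subalgebra. The reduction step and the model computation (when the implementing unitaries genuinely lie in $D$ and are multiplicative) are fine, modulo the slip that for cyclic $H$ of order $\ge 5$ the set $D+Dy$ with $y=v_t+v_{t^{-1}}$ is not closed under multiplication ($y^2$ involves $v_{t^2}+v_{t^{-2}}$), so you would have to pass to the full flip-fixed-point algebra; the paper instead takes $A+K$ for an ideal $K$ of $I\rtimes_{\alpha,\lambda}H\isoto I\tensor\ca_\lambda(H)$ whose nonzero elements are forced to have many nonzero Fourier coefficients.

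The genuine gap is exactly the step you defer: descending from quasi-innerness to an honest construction inside the C*-crossed product. Two separate obstructions have to be overcome, and neither is routine. First, the unitaries $u_h$ live only in the injective envelope $I(D)$ (not the bidual, as you write --- quasi-innerness is defined via $I(D)$ here), and one must manufacture a \emph{nonzero ideal} $I\normal D$ on which $\alpha_h$ becomes inner with implementing unitary in $M(I)$. The paper does this by a dichotomy: the covariant pair $(\id_{I(A)},u)$ gives $\pi:I(A)\rtimes_{\alpha,\lambda}H\to I(A)$, whose kernel meets $A\rtimes_{\alpha,\lambda}H$ nontrivially because that inclusion is \emph{essential} (Hamana); then either $A+J$ is already the desired counterexample, or its being a partial crossed product hands you the ideal $I_1$ with $I_1u\subseteq I_1$ and $uI_1\subseteq I_1$, hence $u\in M(I_1)$. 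For order-$2$ automorphisms the paper proves more (Lemma~\ref{lem:order_two_inner}: innerness on an \emph{essential} ideal with $u^2=1$), which is needed to intersect the ideals for $s$ and $t$. Your alternative of ``passing to a hereditary subalgebra where the $u_h$ can be realized'' is precisely the assertion requiring proof, and Examples~\ref{ex:quasi_innerness_ideals_not_invariant} and \ref{ex:quasi_innerness_doesnt_pass_quotients} show that quasi-innerness interacts badly with ideals and quotients, so this cannot be waved through. Second, in the $\Z/2\Z\times\Z/2\Z$ case the cocycle $u_su_tu_{st}^*$ can be genuinely nontrivial ($u_su_t=-u_tu_s$), in which case your element $y=\sum_{h\ne e}v_h$ no longer satisfies $y^2=3+2y$ and $D+Dy$ is not an algebra; the paper must switch to a different construction there, realizing $M(J)\rtimes_{\alpha,\lambda}(\Z/2\Z\times\Z/2\Z)\isoto M_2(M(J))$ and using the projection $r=\tfrac14(2\lambda_e-u_s^*\lambda_s+\sqrt3\,u_t^*\lambda_t)$. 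Since both of these points are the substantive content of the theorem, the proposal as written does not constitute a proof.
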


    The notion of a rigid stabilizer for a group action on a C*-algebra is an analogue of a point stabilizer for a group acting on a topological space, and the failure of an action to be pointwise residually properly outer is equivalent to the existence of non-trivial rigid stabilizers. See Section \ref{sec:counterexample_constructions:noncommutative} for details.

    The only case missing from Theorem \ref{thmintro:converse} is when every non-trivial rigid stabilizer for the action is of order $2$. It turns out that Theorem \ref{thmintro:converse} fails in this case. We will demonstrate this in Section~\ref{sec:noncommutative_counterexamples}. However, the failure of Theorem~\ref{thmintro:converse} in this special case is a strictly noncommutative phenomenon. In the commutative setting, we will establish a complete converse for Theorem~\ref{thmintro:main} in Section~\ref{thmintro:commutative}. This yields the following refinement of \cite[Corollary~5.8]{befpr_groupoid_intermediate}. 

    \begin{thmintro}
    \label{thmintro:commutative}
         Let $\theta : \Gamma \curvearrowright X$ be an action of a discrete group $\Gamma$ on a compact Hausdorff space $X$ by homeomorphisms and let $\alpha : \Gamma \curvearrowright \rC(X)$ denote the corresponding action of $\Gamma$ on the unital commutative C*-algebra $\rC(X)$ by *-automorphisms. If $\Gamma$ has the approximation property, and either $|\Gamma| \leq 2$ or $\theta$ is free, then every intermediate subalgebra for the inclusion $\rC(X) \subseteq \rC(X) \rtimes_{\alpha,\lambda} \Gamma$ is the reduced crossed product of a partial subaction of $\alpha$. Conversely, if $|\Gamma| \geq 3$ and $\theta$ is not free, then there is an intermediate C*-algebra for the above inclusion that is not the reduced crossed product of a partial subaction of $\alpha$. 
    \end{thmintro}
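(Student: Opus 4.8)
\emph{Strategy.} The plan is to derive both directions from the general theorems already proved, pushing all of the genuine work into the construction of counterexamples in the order-two regime.

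\emph{The forward implication.} For this I would simply invoke Theorem~\ref{thmintro:main}. The hypothesis $\card{\Gamma} \leq 2$ is one of its two alternatives, so that case is immediate. When $\theta$ is free, I would appeal to the equivalence recorded in Section~\ref{sec:properly_outer}: for a commutative C*-algebra $A = \rC(X)$, pointwise residual proper outerness of $\alpha$ is equivalent to freeness of the induced action $\theta$ on the spectrum $X$. Hence freeness of $\theta$ makes $\alpha$ pointwise residually properly outer, and Theorem~\ref{thmintro:main} (using the approximation property of $\Gamma$) yields the Galois correspondence verbatim.

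\emph{Reducing the converse.} Now assume $\card{\Gamma} \geq 3$ and that $\theta$ is not free, and fix $x_0 \in X$ with nontrivial stabilizer $\Gamma_{x_0}$. The point to exploit is that in the commutative setting an automorphism of a quotient $\rC(X)/K = \rC(Y)$ is quasi-inner only when it is the identity, so for the ideal $K_Y = \{f : f|_Y = 0\}$ attached to a closed set $Y$ one has $\Gamma_{K_Y} = \{g : \theta_g(Y) = Y \text{ and } \theta_g|_Y = \id_Y\}$. Taking $Y = \{x_0\}$ gives the proper ideal $K_{\{x_0\}}$ with $\Gamma_{K_{\{x_0\}}} = \Gamma_{x_0}$. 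Thus if some point stabilizer has order $\geq 3$, Theorem~\ref{thmintro:converse} immediately produces a non-graded intermediate algebra. Since $\Gamma_{K_Y} \subseteq \Gamma_y$ for every $y \in Y$, I may therefore reduce to the case that every point stabilizer has order $\leq 2$, so that every rigid stabilizer has order $\leq 2$ and Theorem~\ref{thmintro:converse} gives no information.

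\emph{The order-two construction (the crux).} In this regime I would build the counterexample directly, using the two characters of $\Z/2$. Fix a non-free point $x_0$ with $\Gamma_{x_0} = \{e,g\}$ and pass to the orbit closure $Z = \overline{\Gamma x_0}$; a counterexample on $\rC(Z) \rtimes_{\alpha,\lambda} \Gamma$ pulls back along the quotient $\rC(X) \rtimes_{\alpha,\lambda} \Gamma \twoheadrightarrow \rC(Z) \rtimes_{\alpha,\lambda} \Gamma$, since the quotient intertwines Fourier coefficients. In this regime the fixed sets of distinct involutions are disjoint (a common fixed point would have stabilizer of order $\geq 3$), so there is a well-defined central self-adjoint unitary $w = \sum_{y} \mathbf{1}_y \lambda_{g_y}$ in the bidual, where $g_y$ generates $\Gamma_y$ and $g_y = e$ at free points; conjugation by $\lambda_s$ sends $g_y$ to $g_{\theta_s y}$, so $w$ is central and $e = \tfrac12(1+w)$ is a central projection isolating the ``trivial character'' part of the isotropy. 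The candidate is the intermediate algebra $B$ of those $b$ whose compression $ebe$ lies in $\rC(Z)e$ — full on the complementary summand, scalar-diagonal on the trivial-character summand. When the orbit is finite this is the concrete inclusion $\rC(Z) \rtimes_{\alpha,\lambda} \Gamma \cong M_n \oplus M_n$ with $B \cong D_n \oplus M_n$, and one checks directly that $B$ fails to be closed under $b \mapsto b_s \lambda_s$: an element supported in the full summand has a Fourier coefficient at a suitable \emph{free} group element whose component is off-diagonal and hence leaves $B$. Since an intermediate algebra is the crossed product of a partial subaction precisely when it is closed under taking Fourier components, this exhibits $B$ as a non-example.

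\emph{The main obstacle.} I expect the difficulty to be concentrated in this construction when the orbit of $x_0$ is infinite, so that $\rC(Z) \rtimes_{\alpha,\lambda} \Gamma$ is no longer a finite direct sum of matrix algebras and the symmetry $w$ lives only in the bidual rather than in the C*-algebra itself. Here I would describe the crossed product over $Z$ through the imprimitivity/Morita picture governed by the $\Z/2$-isotropy, verify that $B$ (defined via the bidual projection $e$) is a genuine intermediate C*-subalgebra, and produce one element of $B$ together with a free group element whose Fourier coefficient escapes $B$. Controlling $w$ and these Fourier coefficients in the absence of clopen fixed sets — the precise point where the finite or totally disconnected picture breaks down — is the delicate step where I anticipate the bulk of the technical effort.
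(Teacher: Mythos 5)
Your forward direction and your reduction of the converse are correct and coincide with the paper's: freeness of $\theta$ is equivalent to pointwise residual proper outerness of $\alpha$ in the commutative case, so Theorem~\ref{thmintro:main} gives the Galois correspondence; and since a quasi-inner automorphism of a commutative quotient $\rC(Y)$ is the identity, the rigid stabilizer of the ideal of a closed set $Y$ is $\{g : \theta_g|_Y = \id_Y\} \subseteq \Gamma_y$, so when every point stabilizer has order at most $2$ Theorem~\ref{thmintro:converse} is indeed unavailable and one is reduced to the order-two case. The gap is that this remaining case -- which is the entire content of the converse beyond what Theorem~\ref{thmintro:converse} already gives -- is only established by your argument when the orbit of $x_0$ is finite. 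In general your construction has unresolved problems that you yourself flag but do not solve: (i) as written $w = \sum_y \mathbf{1}_y \lambda_{g_y}$ is not a unitary in the bidual, since $\sum_y \mathbf{1}_y$ is the atomic projection of $\rC(Z)^{**}$ rather than $1$, so $w$ must be corrected by the diffuse part; (ii) more seriously, once $e = \tfrac12(1+w)$ lives only in the bidual it is not clear that $B = \{b : ebe \in \rC(Z)e\}$ contains anything outside $\rC(Z)$. If $\{b \in \rC(Z)\rtimes_{\alpha,\lambda}\Gamma : be = 0\} = 0$ -- which is a real threat when the fixed-point sets have empty interior, e.g.\ for the infinite dihedral group acting on the circle -- then $B$ collapses to $\rC(Z)$ and is itself a (trivial) partial crossed product, so no counterexample is produced. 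Deferring ``the delicate step where the bulk of the technical effort'' lies is deferring the proof.

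For comparison, the paper's Proposition~\ref{prop:converse_two_stabilizers} handles this case by an entirely local construction that avoids the bidual and the orbit structure: choose distinct $r,s$ in the same nontrivial coset of $\Gamma_x$, a neighbourhood $U$ of $x$ with $rU \cap U = sU \cap U = \emptyset$ and $r^{-1}U = s^{-1}U$, and let $B$ consist of elements supported on $\{e, r^{\pm 1}, s^{\pm 1}, r^{-1}s, rs^{-1}\}$ whose coefficients are supported in $U$ (respectively $r^{-1}U$) and whose $r$- and $s$-coefficients agree at $x$. The pointwise coupling $f_5(x) = f_6(x)$ is preserved under all $\rC(X)$-bimodule operations and under products of such elements, so $B$ is a norm-closed intermediate C*-algebra in which a nonzero $r$-th Fourier coefficient forces a nonzero $s$-th one; hence $B$ is not a partial crossed product, with no hypotheses on the orbit or on the topology of the fixed-point sets. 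If you wish to salvage your character-of-the-isotropy picture, you would need to localize it to a neighbourhood of $x_0$ in essentially this way.
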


    Finally, we want to mention two recent papers that the authors found particularly inspiring. The first is Suzuki's \cite{suzuki_2020} paper on intermediate C*-algebras and intermediate von Neumann algebras for another type of inclusion arising from crossed products. The second is R\o{}rdam's \cite{rordam_2023} paper relating averaging properties for C*-algebras with various noncommutative generalizations of freeness.

    \section*{Acknowledgements}
    The authors are grateful to Bartosz Kwa\'{s}niewski for both helpful dicusssions about noncommutative freeness, and giving numerous corrections and suggestions while looking through a draft of this paper. The authors are also grateful to Camila Sehnem for answering a number of questions about partial actions and reduced crossed products of partial actions.

    \section{Preliminaries}
    \label{sec:preliminaries}
    
    \subsection{Actions, partial actions and reduced crossed products}
    \label{sec:dynamical_systems}
    
    In this section we will briefly review some facts about actions and partial actions of groups on C*-algebras, as well as their corresponding reduced crossed products. For a reference on group actions and their reduced crossed products, we refer the reader to the book of Brown and Ozawa \cite{brown_ozawa}. For a reference on partial actions and their reduced crossed products, we refer the reader to the book of Exel \cite{exel_partial_dynamical_systems} and the paper of Quigg and Raeburn \cite{QR}.

    Recall that an action $\alpha : \Gamma \curvearrowright A$ of a discrete group $\Gamma$ on a unital C*-algebra $A$ by *-automorphisms is a group homomorphism $\alpha : \Gamma \to \operatorname{Aut}(A)$. If $A$ is commutative, say $A = \rC(X)$ for a compact Hausdorff space $X$, then it follows from Gelfand duality that $\alpha$ corresponds to an action $\theta : \Gamma \curvearrowright X$ of $\Gamma$ on $X$ by homeomorphisms, with $\alpha_g(f) = f \circ \theta_{g^{-1}}$ for $g \in \Gamma$ and $f \in \rC(X)$. 
    
    \begin{definition}
    A \emph{partial action} $\beta : \Gamma \curvearrowright A$ of a discrete group $\Gamma$ on a unital C*-algebra $A$ by partial *-automorphisms consists of a family of ideals $\{I_g\}_{g \in \Gamma}$ and a family of *-isomorphisms $\beta = \{\beta_g : I_{g^{-1}} \to I_g\}_{g \in \Gamma}$ satisfying
    \begin{enumerate}
        \item $I_e = A$ and $\beta_e = \id_A$,
        \item $\beta_h^{-1}(I_h \cap I_{g^{-1}}) \subseteq I_{(gh)^{-1}}$ and $\beta_{gh} = \beta_g \beta_h$ on $\beta_h^{-1}(I_h \cap I_{g^{-1}})$ for all $g,h \in \Gamma$.
    \end{enumerate}
    If $\alpha : \Gamma \curvearrowright A$ is an action of $\Gamma$ on $A$ by *-automorphisms, then we will say that $\beta$ is a \emph{partial subaction} of $\alpha$ if $\beta_g = \alpha_g|_{I_{g^{-1}}}$ for all $g \in \Gamma$.
    \end{definition}

    If $A$ is commutative, say $A = \rC(X)$ for a compact Hausdorff space $X$, and $\beta : \Gamma \curvearrowright \rC(X)$ is a partial action, then it follows from Gelfand duality that $\beta$ corresponds to a partial action $\eta : \Gamma \curvearrowright X$ of $\Gamma$ on $X$ by partial homeomorphisms. Let $\{I_g\}_{g \in \Gamma}$ denote the family of ideals of $A$ from the definition of $\beta$ and let $\{U_g\}_{g \in \Gamma}$ denote the corresponding family of open subsets of $X$ defined by $U_g = \operatorname{supp}(I_g)$, so that $I_g = \rC_0(U_g)$ for $g \in \Gamma$. The *-isomorphism $\beta_g : \rC_0(U_{g^{-1}}) \to \rC_0(U_g)$ corresponds to the homeomorphism $\eta_{g^{-1}} : U_g \to U_{g^{-1}}$ via $\beta_g(f) = f \circ \eta_{g^{-1}}$ for $g \in \Gamma$ and $f \in \rC_0(U_{g^{-1}})$.

    Let $\alpha : \Gamma \curvearrowright A$ be an action of a discrete group $\Gamma$ on a unital C*-algebra $A$ by *-automorphisms. The corresponding reduced crossed product $A \rtimes_{\alpha,\lambda} \Gamma$ is a C*-algebra generated by a copy of $A$ and a regular representation $\lambda$ of $\Gamma$ that implements $\alpha$, in the sense that
    \[
        \alpha_g(a) = \lambda_g a \lambda_g^* \quad \text{ for all } \quad a \in A,\ g \in \Gamma.
    \]
    It is a convenient fact that elements in the algebraic span of $\{a \lambda_g : a \in A,\ g \in \Gamma\}$ are dense in $A \rtimes_{\alpha,\lambda} \Gamma$. 
    
    A characteristic property of $A \rtimes_{\alpha,\lambda} \Gamma$ is the existence of a faithful conditional expectation $E : A \rtimes_{\alpha,\lambda} \Gamma \to A$ satisfying
    \[
        E(a \lambda_g) = \begin{cases} a & g = e \\ 0 & \text{otherwise} \end{cases}, \quad \text{for} \quad a \in A,\ g \in \Gamma.
    \]
    An element $b \in A \rtimes_{\alpha,\lambda} \Gamma$ is uniquely determined by its Fourier series $b \sim \sum_{g \in \Gamma} b_g \lambda_g$, where $b_g = E(b \lambda_g^*)$ denotes the $g$-th Fourier coefficient of $b$. We note that, as in the classical setting, the Fourier series for $b$ does not necessarily converge to $b$ in any reasonable sense. We will refer to the set $\{g \in \Gamma : b_g \ne 0\}$ of nonzero Fourier coefficients of $b$ as the \emph{support} of $b$, so that $b$ belongs to the algebraic span of $\{a \lambda_g : a \in A,\ g \in \Gamma\}$ if and only if it has finite support.

    Let $\beta : \Gamma \curvearrowright A$ be a partial subaction of $\alpha$. The corresponding reduced crossed product $A \rtimes_{\beta,\lambda} \Gamma$, is typically defined as the reduced cross-sectional C*-algebra of a Fell bundle. However, for the purposes of this paper, it will be convenient to instead realize $A \rtimes_{\beta,\lambda} \Gamma$ as an intermediate C*-algebra for the inclusion $A \subseteq A \rtimes_{\alpha,\lambda} \Gamma$.
    
    Specifically, let $\{I_g\}_{g \in \Gamma}$ denote the family of ideals for $A$ from the definition of $\beta$. In the next result, we will prove that the closed span of
    \[
        \{a_g \lambda_g : a_g \in I_g,\ g \in \Gamma\} \subseteq A \rtimes_{\alpha,\lambda} \Gamma
    \]
    is an intermediate C*-algebra for the inclusion $A \subseteq A \rtimes_{\alpha,\lambda} \Gamma$ that is isomorphic to the standard definition of the reduced crossed product $A \rtimes_{\beta,\lambda} \Gamma$ of $\beta$. We will then be able to identify $A \rtimes_{\beta,\lambda} \Gamma$ with the above C*-algebra.

    \begin{proposition}
    \label{prop:partial_crossed_product_inclusion}
        Let $\alpha : \Gamma \curvearrowright A$ be an action of a discrete group $\Gamma$ on a unital C*-algebra $A$ by *-automorphisms and let $\beta : \Gamma \curvearrowright A$ be a partial subaction of $\alpha$ with corresponding family of ideals $\{I_g\}_{g \in \Gamma}$. The closed span of
        \[
            \{a_g \lambda_g : a_g \in I_g,\ g \in \Gamma\} \subseteq A \rtimes_{\alpha,\lambda} \Gamma
        \]
        is an intermediate C*-algebra for the inclusion $A \subseteq A \rtimes_{\alpha,\lambda} \Gamma$ that is isomorphic to the reduced crossed product $A \rtimes_{\beta,\lambda} \Gamma$.
    \end{proposition}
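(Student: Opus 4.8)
Write $C$ for the closed linear span of $\{a_g\lambda_g : a_g\in I_g,\ g\in\Gamma\}$ inside $A\rtimes_{\alpha,\lambda}\Gamma$. The inclusions $A\subseteq C\subseteq A\rtimes_{\alpha,\lambda}\Gamma$ are immediate, the first because $I_e=A$. To see that $C$ is a $*$-subalgebra it suffices to check closure under products and adjoints of the spanning elements. For products, $(a_g\lambda_g)(b_h\lambda_h)=a_g\alpha_g(b_h)\lambda_{gh}$, and writing $a_g\alpha_g(b_h)=\alpha_g\big(\alpha_{g^{-1}}(a_g)b_h\big)$ with $\alpha_{g^{-1}}(a_g)\in I_{g^{-1}}$ and $b_h\in I_h$, the partial-action axioms give $\alpha_g(I_{g^{-1}}\cap I_h)\subseteq I_{gh}$, so $a_g\alpha_g(b_h)\in I_{gh}$; for adjoints, $(a_g\lambda_g)^*=\alpha_{g^{-1}}(a_g^*)\lambda_{g^{-1}}$ with $\alpha_{g^{-1}}(a_g^*)\in I_{g^{-1}}$. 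Hence $C$ is an intermediate C*-algebra. To identify $C$ with $A\rtimes_{\beta,\lambda}\Gamma$, recall that the latter is the reduced cross-sectional algebra $\ca_\lambda(\mathcal{B})$ of the Fell bundle $\mathcal{B}=\{I_g\delta_g\}_{g\in\Gamma}$ attached to $\beta$, a quotient of the full cross-sectional algebra $\ca(\mathcal{B})$ via the regular representation $\Lambda\colon\ca(\mathcal{B})\to\ca_\lambda(\mathcal{B})$. I would first define a representation of $\mathcal{B}$ in $A\rtimes_{\alpha,\lambda}\Gamma$ by $a_g\delta_g\mapsto a_g\lambda_g$ for $a_g\in I_g$. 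Using $\beta_g=\alpha_g|_{I_{g^{-1}}}$, a direct computation shows that this is compatible with the bundle operations $(a_g\delta_g)(b_h\delta_h)=\beta_g\big(\beta_{g^{-1}}(a_g)b_h\big)\delta_{gh}$ and $(a_g\delta_g)^*=\beta_{g^{-1}}(a_g^*)\delta_{g^{-1}}$, matching the products and adjoints above. By the universal property of $\ca(\mathcal{B})$ this integrates to a $*$-homomorphism $\widetilde\Phi\colon\ca(\mathcal{B})\to A\rtimes_{\alpha,\lambda}\Gamma$ whose range has closure $C$.

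The crux, and the step I expect to be the main obstacle, is to show that $\widetilde\Phi$ descends to the \emph{reduced} algebra: a $*$-homomorphism defined on finitely supported sections is automatically compatible with the full norm, and the difficulty is to see that it also does not separate the reduced norm. Let $E_0\colon\ca(\mathcal{B})\to A$ and $E_\beta\colon\ca_\lambda(\mathcal{B})\to A$ be the canonical conditional expectations, so that $E_\beta\circ\Lambda=\Lambda\circ E_0$, the map $E_\beta$ is faithful, and $\Lambda$ is isometric on the unit fibre $A$. Faithfulness of $E_\beta$ together with injectivity of $\Lambda|_A$ yields the identity $\ker\Lambda=\{x\in\ca(\mathcal{B}): E_0(x^*x)=0\}$: indeed $\Lambda(x)=0$ iff $\Lambda(x^*x)=0$ iff $E_\beta\Lambda(x^*x)=0$ iff $\Lambda E_0(x^*x)=0$ iff $E_0(x^*x)=0$.

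On the ambient crossed product, let $E\colon A\rtimes_{\alpha,\lambda}\Gamma\to A$ be the canonical \emph{faithful} conditional expectation. Comparing Fourier coefficients on finitely supported sections and extending by continuity gives $E\circ\widetilde\Phi=\widetilde\Phi\circ E_0$. Consequently, if $E_0(x^*x)=0$ then $E\big(\widetilde\Phi(x)^*\widetilde\Phi(x)\big)=\widetilde\Phi(E_0(x^*x))=0$, and faithfulness of $E$ forces $\widetilde\Phi(x)=0$. Thus $\ker\Lambda\subseteq\ker\widetilde\Phi$, so $\widetilde\Phi$ factors as $\widetilde\Phi=\Psi\circ\Lambda$ for a $*$-homomorphism $\Psi\colon A\rtimes_{\beta,\lambda}\Gamma\to A\rtimes_{\alpha,\lambda}\Gamma$ with $\Psi|_A$ the inclusion of $A$.

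Finally I would verify that $\Psi$ is an isometric isomorphism onto $C$. Passing $E\circ\widetilde\Phi=\widetilde\Phi\circ E_0$ through the surjection $\Lambda$ gives $E\circ\Psi=\Psi\circ E_\beta$. Hence if $\Psi(y)=0$ then $\Psi\big(E_\beta(y^*y)\big)=E\big(\Psi(y^*y)\big)=0$; since $E_\beta(y^*y)\in A$ and $\Psi|_A$ is injective this yields $E_\beta(y^*y)=0$, and faithfulness of $E_\beta$ gives $y^*y=0$, so $y=0$. Thus $\Psi$ is injective, hence isometric, and its range is the closure of $\widetilde\Phi(\ca(\mathcal{B}))$, namely $C$. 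Therefore $\Psi\colon A\rtimes_{\beta,\lambda}\Gamma\to C$ is the desired $*$-isomorphism, and $C$ is exactly the reduced crossed product of $\beta$ realized as an intermediate C*-algebra.
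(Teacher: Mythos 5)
Your argument is correct, and it reaches the identification $C \isoto A \rtimes_{\beta,\lambda}\Gamma$ by a genuinely different route than the paper. The paper first builds partial isometries $v_g = p_g\lambda_g$ in the bidual $(A\rtimes_{\alpha,\lambda}\Gamma)^{**}$ from the central support projections of the ideals $I_g$, obtains a covariant representation $(\id_A, v)$ of $\beta$ and hence a representation of the full partial crossed product with image $B$, and then verifies that the canonical normal coaction of $\ca_u(\Gamma)$ on $A\rtimes_{\alpha,\lambda}\Gamma$ restricts to a coaction on $B$ compatible with the dual coaction $\hat\beta$; Quigg--Raeburn's coaction criterion then identifies $B$ with the reduced partial crossed product in one stroke. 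You instead stay entirely within the Fell-bundle picture and run the classical conditional-expectation argument twice: once to show that the integrated representation $\widetilde\Phi$ of $\ca(\mathcal{B})$ annihilates $\ker\Lambda$ (via $E\circ\widetilde\Phi = \widetilde\Phi\circ E_0$ and faithfulness of $E$), so that it descends to $\Psi$ on the reduced cross-sectional algebra, and once more to show $\Psi$ is injective (via $E\circ\Psi = \Psi\circ E_\beta$ and faithfulness of $E_\beta$). Your route is more elementary and self-contained, requiring only standard facts about Fell bundles (the universal property of $\ca(\mathcal{B})$ and faithfulness of the canonical expectation on $\ca_\lambda(\mathcal{B})$) rather than the machinery of normal coactions and the bidual; it also produces the useful intertwining $E\circ\Psi = \Psi\circ E_\beta$ explicitly. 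The paper's route is shorter on the page because the coaction criterion packages the descent and the injectivity together. As a minor bonus, your verification that $C$ is closed under products and adjoints, via $\alpha_g(I_{g^{-1}}\cap I_h)\subseteq I_{gh}$, spells out a step the paper dismisses as easy.
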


    \begin{proof}
        Let $B$ denote the closed span of $\{a_g \lambda_g : a_g \in I_g,\ g \in \Gamma\} \subseteq A \rtimes_{\alpha,\lambda} \Gamma$. It is easy to see that $B$ is a C*-algebra, and in particular an intermediate C*-algebra for the inclusion $A \subseteq A \rtimes_{\alpha,\lambda} \Gamma$. We will prove that $B$ is isomorphic to the reduced crossed product $A \rtimes_{\beta,\lambda} \Gamma$ using Quigg and Raeburn's characterization from \cite{QR} of reduced crossed products of partial actions in terms of coactions, making use of their terminology.

        For $g \in \Gamma$, let $p_g \in A^{**}$ denote the central support projection for the ideal $I_g$. Note that $\beta_{g^{-1}}(p_g) = p_{g^{-1}}$. Let $v_g = p_g \lambda_g \in (A \rtimes_{\alpha,\lambda} \Gamma)^{**}$. Then
        \[ v_g^* = \lambda_{g^{-1}} p_g = \beta_{g^{-1}}(p_g) \lambda_{g^{-1}} = p_{g^{-1}} \lambda_{g^{-1}} = v_{g^{-1}}, \]
        so $v_g$ is a partial isometry with initial projection $p_{g^{-1}}$ and final projection $p_g$. Hence $v : \Gamma \to (A \rtimes_{\alpha,\lambda} \Gamma)^{**}$ is a partial representation of $\Gamma$. Furthermore, for $a \in I_{g^{-1}}$,
        \[ v_g a v_g^* = p_g \lambda_g a \lambda_g^* p_g = \beta_g(a) \in I_g, \]
        so $v_g$ implements $\beta_g$.
        
        From above, the pair $(\id_A,v)$ is a covariant representation for the partial action $\beta$, and so gives rise to a representation $\id_A \times v$ of the full crossed product $A \rtimes_\beta \Gamma$ of $\beta$. The C*-algebra generated by this representation is exactly the C*-algebra $B$.
        
        Let $\ca_u(\Gamma)$ denote the full C*-algebra of $\Gamma$ generated by a universal unitary representation $u$ of $\Gamma$. Let $\delta : A \rtimes_{\alpha,\lambda} \Gamma \to A \rtimes_{\alpha,\lambda} \Gamma \otimes \ca_u(\Gamma)$ denote the canonical normal coaction on $A \rtimes_{\alpha,\lambda} \Gamma$ satisfying
        \[
            \delta(a \lambda_g) = a \lambda_g \otimes u_g \quad \text{for} \quad a \in A,\ g \in \Gamma.
        \]
        Note that $\delta$ is the composition of the isomorphism sending $a \lambda_g \to a \lambda_g \otimes \lambda_g \to $
        In particular, the restriction $\delta|_B : B \to B \otimes \ca_u(\Gamma)$ is a coaction on $B$ satisfying
        \[ \delta|_B \circ \id_A \times v = ((\id_A \times v) \otimes \id) \circ \hat{\beta}, \]
        where $\hat{\beta} : A \rtimes_\beta \Gamma \to A \rtimes_\beta \Gamma \otimes \ca_u(\Gamma)$ is the dual coaction for $\beta$. It now follows from \cite[Corollary 3.8]{QR} that $B$ is ismomorphic to $A \rtimes_{\beta,\lambda} \Gamma$.
    \end{proof}
    
    In light of Proposition~\ref{prop:partial_crossed_product_inclusion}, we are now able to make the following definition.
    
    \begin{definition}
        Let $\alpha : \Gamma \curvearrowright A$ be an action of a discrete group $\Gamma$ on a unital C*-algebra $A$ by *-automorphisms and let $\beta : \Gamma \curvearrowright A$ be a partial subaction of $\alpha$ with corresponding family of ideals $\{I_g\}_{g \in \Gamma}$. The reduced crossed product $A \rtimes_{\beta,\lambda} \Gamma$ of $\beta$ is the closed span of
        \[
            \{a_g \lambda_g : a_g \in I_g,\ g \in \Gamma\} \subseteq A \rtimes_{\alpha,\lambda} \Gamma.
        \]
    \end{definition}

    \subsection{Groups with the approximation property}
    \label{sec:approximation_property}
    In this section we will briefly discuss the \emph{approximation property} for discrete groups, which was introduced in the work of Haagerup and Kraus \cite{haagerup_kraus_approximation_property}. We take the following equivalent characterization of the approximation property as the definition (see e.g.\ \cite[Theorem~12.4.9]{brown_ozawa} and \cite[Definition~2.3]{suzuki_decreasing_intersection}).

    \begin{definition}
    \label{def:approximation_property}
        A discrete group $\Gamma$ is said to have the \emph{approximation property} if there is a net $\varphi_i : \Gamma \to \mathbb{C}$ of finitely supported functions such that the maps
        \[ m_{\varphi_i} \tensor \id_{\B(\ell^2)} : C^*_\lambda(\Gamma) \tensor_{\min} \B(\ell^2) \to C^*_\lambda(\Gamma) \tensor_{\min} \B(\ell^2) \]
        converge to the identity map in the point-norm topology, where $\ell^2$ denotes the unique separable infinite dimensional Hilbert space and $m_{\varphi} : \ca_\lambda(\Gamma) \to \ca_\lambda(\Gamma)$ is the unique bounded linear map satisfying $m_{\varphi}(\lambda_g) = \varphi(g) \lambda_g$.
    \end{definition}

    The following application of the approximation property will be of great importance to us. It follows from the proof of \cite[Proposition~3.4]{suzuki_decreasing_intersection}. We also note that this proof also holds for uncountable groups and non-separable C*-algebras, as it is an easy exercise that Definition~\ref{def:approximation_property} will hold when replacing $\B(\ell^2)$ with $\B(H)$ for a non-separable Hilbert space $H$. 

    \begin{proposition}
    \label{prop:ap_crossed_products}
        Assume $\Gamma$ is a discrete group with the approximation property, and that $\alpha : \Gamma \curvearrowright A$ is an action of $\Gamma$ on a unital C*-algebra $A$ by *-automorphisms. Then given any element $b \in A \rtimes_{\alpha,\lambda} \Gamma$ with Fourier series $b \sim \sum_{g \in \Gamma} b_g \lambda_g$, we have that $b$ belongs to the closed span of $\{b_g \lambda_g : g \in \Gamma\}$.
    \end{proposition}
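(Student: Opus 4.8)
The plan is to transfer the approximation property from $\ca_\lambda(\Gamma)$ to the crossed product using the canonical reduced coaction, and thereby approximate $b$ in norm by finitely supported Fourier truncations, each of which visibly lies in the span of $\{b_g \lambda_g\}$. First I would construct, for every finitely supported $\varphi : \Gamma \to \C$, a bounded map $\Phi_\varphi : A \rtimes_{\alpha,\lambda} \Gamma \to A \rtimes_{\alpha,\lambda} \Gamma$ with $\Phi_\varphi(a\lambda_g) = \varphi(g)\, a\lambda_g$. Let $\delta_r : A \rtimes_{\alpha,\lambda} \Gamma \to (A \rtimes_{\alpha,\lambda} \Gamma) \otimes_{\min} \ca_\lambda(\Gamma)$ be the reduced coaction determined by $\delta_r(a\lambda_g) = a\lambda_g \otimes \lambda_g$; it is an injective, hence isometric, $*$-homomorphism. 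Since a finitely supported $\varphi$ lies in the Fourier algebra of $\Gamma$, the multiplier $m_\varphi$ is completely bounded on $\ca_\lambda(\Gamma)$, so $\id \otimes m_\varphi$ is bounded on the minimal tensor product $(A \rtimes_{\alpha,\lambda} \Gamma) \otimes_{\min} \ca_\lambda(\Gamma)$. A direct computation on the dense span of $\{a\lambda_g\}$ shows $(\id \otimes m_\varphi) \circ \delta_r = \delta_r \circ \Phi_\varphi$ and that $\id \otimes m_\varphi$ carries $\delta_r(A \rtimes_{\alpha,\lambda} \Gamma)$ into itself; since $\delta_r$ is isometric, this simultaneously defines $\Phi_\varphi$ and bounds it.

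I would then record the two elementary features of $\Phi_\varphi$. Computing Fourier coefficients on the dense span and extending by continuity of $E$, the $h$-th coefficient of $\Phi_\varphi(b)$ equals $\varphi(h)\, b_h$. Because $\varphi$ is finitely supported, $\Phi_\varphi(b)$ therefore has finite support, and so coincides with the finite sum $\sum_{g \in \Gamma} \varphi(g)\, b_g \lambda_g$, which lies in the span of $\{b_g \lambda_g : g \in \Gamma\}$. It thus remains only to produce a net with $\Phi_{\varphi_i}(b) \to b$ in norm.

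For this I would feed the approximation property through $\delta_r$. Fix a faithful representation $A \subseteq \B(H)$, realize $A \rtimes_{\alpha,\lambda} \Gamma \subseteq \B(H')$ with $H' = H \otimes \ell^2(\Gamma)$, and take the net $\varphi_i$ supplied by Definition~\ref{def:approximation_property}. As observed in the paragraph preceding this proposition, Definition~\ref{def:approximation_property} continues to hold with $\B(\ell^2)$ replaced by $\B(H')$, so $m_{\varphi_i} \otimes \id_{\B(H')} \to \id$ in the point-norm topology on $\ca_\lambda(\Gamma) \otimes_{\min} \B(H')$. Applying the flip isomorphism $\ca_\lambda(\Gamma) \otimes_{\min} \B(H') \cong \B(H') \otimes_{\min} \ca_\lambda(\Gamma)$, which converts $m_{\varphi_i} \otimes \id$ into $\id \otimes m_{\varphi_i}$ and preserves point-norm convergence, yields $\id_{\B(H')} \otimes m_{\varphi_i} \to \id$ on $\B(H') \otimes_{\min} \ca_\lambda(\Gamma)$. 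Since $\delta_r(b) \in (A \rtimes_{\alpha,\lambda} \Gamma) \otimes_{\min} \ca_\lambda(\Gamma) \subseteq \B(H') \otimes_{\min} \ca_\lambda(\Gamma)$, this gives $\norm{(\id \otimes m_{\varphi_i})(\delta_r(b)) - \delta_r(b)} \to 0$, that is $\norm{\delta_r(\Phi_{\varphi_i}(b)) - \delta_r(b)} \to 0$. Isometry of $\delta_r$ then forces $\norm{\Phi_{\varphi_i}(b) - b} \to 0$, and since each $\Phi_{\varphi_i}(b)$ lies in the span of $\{b_g \lambda_g\}$, we conclude that $b$ belongs to its closed span.

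I expect the main obstacle to be the step that legitimizes applying the multiplier, namely realizing $\delta_r$ spatially and checking that $\id \otimes m_\varphi$ preserves $\delta_r(A \rtimes_{\alpha,\lambda} \Gamma)$. The cleanest route is a Fell-absorption computation: with $W$ the multiplicative unitary on $\ell^2(\Gamma) \otimes \ell^2(\Gamma)$ satisfying $W(\lambda_g \otimes 1)W^* = \lambda_g \otimes \lambda_g$ and fixing the diagonal operators on $\ell^2(\Gamma)$ coming from $A$, one verifies that conjugation by $1_H \otimes W$ sends $b \otimes 1$ to a realization of $\delta_r(b)$ inside $\B(H') \otimes_{\min} \ca_\lambda(\Gamma)$ and intertwines $\id \otimes m_\varphi$ with $\Phi_\varphi$. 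This also furnishes the isometry of $\delta_r$ directly. Once the coaction is in place, the remaining points — boundedness of $\Phi_\varphi$, the Fourier-coefficient identity, the flip, and the final limit — are routine.
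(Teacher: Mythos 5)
Your argument is correct and is, in substance, the very proof the paper invokes: the paper only cites the proof of Suzuki's Proposition~3.4 together with the remark that the approximation property persists when $\B(\ell^2)$ is replaced by $\B(H)$ for non-separable $H$, and that cited argument is precisely your Fell-absorption realization of the reduced coaction $\delta_r$ and the transfer of the multipliers $m_{\varphi_i}$ to maps $\Phi_{\varphi_i}$ on $A \rtimes_{\alpha,\lambda} \Gamma$ converging point-norm to the identity. No gaps; the writeup can stand as a proof of the proposition.
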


    Many groups are known to have the approximation property. For example, Haagerup and Kraus proved that this is true for weakly amenable groups. This class of groups includes all amenable groups and, by a result of Ozawa \cite{ozawa_2008}, all hyperbolic groups. However, Lafforgue and de la Salle \cite{LS2011} proved that $\mathrm{SL}_n(\mathbb{Z})$ does not have the approximation property for $n \geq 3$.

    \subsection{The injective envelope and pointwise residual proper outerness}
    \label{sec:properly_outer}
    
    In this section we will briefly review some background material about group actions on C*-algebras and injective envelopes before introducing the definition of a pointwise residually properly outer action.

    Recall that an action $\theta : \Gamma \curvearrowright X$ of a discrete group $\Gamma$ on a compact Hausdorff space $X$ by homeomorphisms is (pointwise) \emph{free} if for every $g \in G \setminus \{e\}$, the homeomorphism $\theta_g$ is free, i.e. the fixed point set of $\theta_g$ is empty. The action $\theta$ is (pointwise) \emph{topologically free} if for every $g \in \Gamma \setminus \{e\}$, the homeomorphism $\theta_g$ is topologically free, i.e.\ the fixed point set of $\theta_g$ has empty interior. 
    
    It is easy to verify that $\theta$ is free if and only if it is ``residually topologically free'', in the sense that for every $g \in G \setminus \{e\}$ and every closed subset $X_0 \subseteq X$ with $\theta_g(X_0) = X_0$, the restriction $\theta_g|_{X_0}$ is topologically free. 

    As mentioned in the introduction, we require an appropriate generalization of (pointwise) freeness for an action of a discrete group on a potentially noncommutative C*-algebra. Kishimoto \cite{Kishimoto_freely_acting} introduced a very natural generalization of topological freeness for *-automorphisms called \emph{proper outerness}, which has proven to be especially valuable in combination with the boundary-theoretic techniques mentioned in the introduction. Motivated by this, as well as the observation above that freeness can be characterized in terms of topological freeness on closed invariant subsets, our noncommutative generalization of (pointwise) freeness for an action, which we will call pointwise residual proper outerness, will be defined in terms of the proper outerness of the individual *-automorphism on invariant quotients.

    There are a number of equivalent characterizations of the proper outerness of a *-automorphism of a C*-algebra. It will be convenient for our purposes to follow Hamana \cite{hamana85-injective_envelopes_equivariant} and define it in terms of the injective envelope of the C*-algebra. Therefore, before defining proper outerness and pointwise residual proper outerness, we will briefly review some basic facts about the injective envelope of a C*-algebra from \cite{hamana79_injective_envelopes_cstaralg,hamana79_injective_envelopes_opsys}.

    For a unital C*-algebra $A$, the \emph{injective envelope} $I(A)$ is the smallest (in a precise sense) injective C*-algebra with the property that there is an injective *-homomorphism from $A$ into $I(A)$. It always exists and is unique up to isomorphism. If $A$ is non-unital, then the injective envelope of $A$ is defined to be the injective envelope of its minimal unitization. It is convenient to identify $A$ with its image in $I(A)$.

    The inclusion $A \subseteq I(A)$ is \emph{essential}, meaning that any unital completely positive map $\phi : I(A) \to B$ such that $\phi|_A$ is a complete isometry is a complete isometry. It is also \emph{rigid}, meaning that the only unital completely positive map $\phi : I(A) \to I(A)$ such that $\phi|_A = \id_A$ is $\id_{I(A)}$. If $C$ is an injective C*-algebra and the inclusion $A \subseteq C$ is rigid, then $C = I(A)$. An important consequence of the rigidity of the inclusion $A \subseteq I(A)$ is that every *-automorphism of $A$ has a unique extension to a unital completely positive map on $I(A)$ that is also a *-automorphism.
    
    The injective envelope $I(A)$ is typically not a von Neumann algebra, but like von Neumann algebras it is monotone complete, meaning that every bounded increasing net of self-adjoint elements in $I(A)$ admits a supremum. More generally, it is known that every injective C*-algebra is monotone complete - see, for example, \cite[Corollary~8.1.5]{SWMonotoneComplete}.

    Hamana observed in \cite[Proposition 5.1]{hamana82_mc_tensor_products_I} that Kallman's \cite{kallman_free_actions} decomposition of a *-automorphism of a von Neumann algebra into inner and properly outer parts applies more generally to monotone complete C*-algebras. Let $A$ be a monotone complete C*-algebra and let $\alpha : A \to A$ be a *-automorphism. Then there is a largest $\alpha$-invariant central projection $p \in A$ such that the restriction $\alpha|_{Ap}$ is \emph{inner}, meaning that there is a unitary $u \in Ap$ such that $\alpha(ap) = uapu^*$ for $a \in Ap$. The restriction $\alpha|_{A(1-p)}$ is said to be \emph{properly outer}.
    
    We are now ready to define quasi-innerness, proper outerness and residual proper outerness for a single *-automorphism. Kishimoto introduced the notion of a properly outer *-automorphism in \cite{Kishimoto_freely_acting} under the name ``freely acting,'' in terms of the Borchers spectrum of the action on invariant ideals. The definition of proper outerness that we give here differs from Kishimoto's definition, but it is equivalent by \cite[Section~7]{hamana85-injective_envelopes_equivariant}. We also note that this definition of proper outerness is equivalent to \emph{aperiodicity} as introduced by Kwa\'{s}niewski and Meyer (see e.g \cite{kwasniewski_meyer_aperiodicity}), which is defined more intrinsically in terms of hereditary subalgebras instead of the injective envelope. We are grateful to Bartosz Kwa\'{s}niewski for bringing this to our attention.
    
    \begin{definition}
    \label{def:residual_proper_outerness}
        Let $\alpha : A \to A$ be a *-automorphism of a C*-algebra $A$ and let $\tilde{\alpha} : I(A) \to I(A)$ denote the unique extension of $\alpha$ to a *-automorphism of $I(A)$. The *-automorphism $\alpha$ is said to be \emph{quasi-inner} if $\tilde{\alpha}$ is inner. It is said to be \emph{properly outer} if $\tilde{\alpha}$ is properly outer. It is said to be \emph{residually properly outer} if for every proper ideal $I \triangleleft A$ with $\alpha(I) = I$, the *-automorphism induced by $\alpha$ on the quotient $A/I$ is properly outer.
    \end{definition}


    \begin{remark}
    We now make an important remark about terminology. If $\alpha : A \to A$ is a *-automorphism of a C*-algebra $A$, we will say that an ideal $I \triangleleft A$ is \emph{$\alpha$-invariant} if $\alpha(I) = I$. This is necessary for $\alpha$ to induce a *-automorphism on the quotient $A/I$.
    \end{remark}

   \begin{definition}
        An action $\alpha : \Gamma \curvearrowright A$ of a discrete group $\Gamma$ on a C*-algebra $A$ by *-automorphisms is \emph{pointwise residually properly outer} if for every $g \in G \setminus \{e\}$, the *-automorphism $\alpha_g$ is residually properly outer.
    \end{definition}

    \begin{remark}
        The above definition of pointwise residual proper outerness for a group action appears in the recent paper \cite[Section~3.3]{pitts_smith_zarikian_2023}, where it is called ``residual proper outerness.'' However, this name has already been used in the literature to refer to a related but different property (see e.g.\ \cite[Definition~7.8]{kirchberg_sierakowski_2016} and \cite[Definition~4.1]{kirchberg_sierakowski_2018}). We have chosen to use the terminology ``pointwise residual proper outerness'' to avoid confusion.
    \end{remark}
    
    The next result will be useful in Section \ref{sec:counterexample_constructions:noncommutative}, when we need to work with *-automorphisms that are not residually properly outer.
    
    \begin{proposition}
    \label{prop:not_residually_outer_iff_quasi_inner}
        Let $\alpha : A \to A$ be an automorphism of a C*-algebra $A$. If $\alpha$ is not residually properly outer, then there is a proper $\alpha$-invariant ideal $I \triangleleft A$ such that the action induced by $\alpha$ on $A/I$ is quasi-inner.
    \end{proposition}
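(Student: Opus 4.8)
The plan is to first invoke the hypothesis to reduce to a cleaner statement, and then exploit the central projection coming from the Kallman--Hamana decomposition of $\tilde\beta$ on the injective envelope.

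Since $\alpha$ is not residually properly outer, by definition there is a proper $\alpha$-invariant ideal $I \triangleleft A$ such that the automorphism $\beta$ induced by $\alpha$ on $B \defeq A/I$ is not properly outer. It suffices to produce a proper $\beta$-invariant ideal $K \triangleleft B$ for which the induced automorphism on $B/K$ is quasi-inner: pulling $K$ back along the quotient map $A \to B$ yields a proper $\alpha$-invariant ideal $J \triangleleft A$ with $A/J \isoto B/K$ and quasi-inner induced automorphism. Passing to the injective envelope, let $\tilde\beta : I(B) \to I(B)$ be the unique *-automorphism extending $\beta$. Since $I(B)$ is monotone complete, the Kallman decomposition gives a largest $\tilde\beta$-invariant central projection $p \in I(B)$ with $\tilde\beta|_{I(B)p}$ inner, and the failure of proper outerness of $\beta$ means precisely that $p \neq 0$.

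Next I would set $K \defeq \set{b \in B : bp = 0}$. Using that $p$ is central and $\tilde\beta(p) = p$, one checks routinely that $K$ is a closed two-sided $\beta$-invariant ideal and that $b \mapsto bp$ descends to a *-isomorphism $B/K \isoto Bp \subseteq I(B)p$ intertwining the induced automorphism $\bar\beta$ on $B/K$ with the restriction $\tilde\beta|_{I(B)p}$. The heart of the argument is to identify $I(B)p$ as the injective envelope of $B/K$. It is injective, being a direct summand (the range of the central compression $x \mapsto xp$) of the injective C*-algebra $I(B)$. For rigidity, given any unital completely positive (u.c.p.) map $\phi : I(B)p \to I(B)p$ with $\phi|_{Bp} = \id$, I would extend it to $\tilde\phi : I(B) \to I(B)$, $\tilde\phi(x) \defeq \phi(xp) + x(1-p)$, which is u.c.p.\ and satisfies $\tilde\phi|_B = \id_B$; rigidity of $B \subseteq I(B)$ then forces $\tilde\phi = \id$, whence $\phi = \id$. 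Thus $B/K \subseteq I(B)p$ is a rigid inclusion into an injective C*-algebra, so $I(B)p = I(B/K)$. Since $\tilde\beta|_{I(B)p}$ is a *-automorphism of $I(B/K)$ restricting to $\bar\beta$, uniqueness of the automorphic extension gives $\widetilde{\bar\beta} = \tilde\beta|_{I(B)p}$, which is inner; hence $\bar\beta$ is quasi-inner, as required.

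The remaining point, and the one most prone to being overlooked, is that $K$ is proper, i.e.\ $Bp \neq 0$. I would argue by contradiction: if $Bp = 0$, then $B \subseteq I(B)(1-p)$, which is again injective as a direct summand of $I(B)$. Injectivity lets me extend the inclusion $B \hookrightarrow I(B)(1-p)$ to a u.c.p.\ map $r : I(B) \to I(B)(1-p)$ with $r|_B = \id_B$; viewed as a self-map of $I(B)$ it fixes $B$, so $r = \id_{I(B)}$ by rigidity, and then $p = r(p) \in I(B)(1-p)$ forces $p = p(1-p) = 0$, contradicting $p \neq 0$. I expect the main obstacle of the whole proof to be the rigidity computation identifying $I(B)p$ with $I(B/K)$, since everything downstream depends on being able to test quasi-innerness on the concrete corner $I(B)p$ rather than on an abstractly constructed injective envelope; the formula $\tilde\phi(x) = \phi(xp) + x(1-p)$ is what makes this accessible. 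When $A$, and hence $B$, is non-unital, one runs the same argument with $B$ replaced by its unitization inside $I(B) = I(B^+)$, which changes nothing essential.
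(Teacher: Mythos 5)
Your proof is correct and follows essentially the same route as the paper: reduce to a quotient $B=A/J$ on which the induced automorphism is not properly outer, take the nonzero inner part $p$ of the Kallman--Hamana decomposition on $I(B)$, pass to the kernel of compression by $p$, and identify $I(B)p$ with the injective envelope of the quotient via the rigidity map $\tilde\phi(x) = \phi(xp) + x(1-p)$. The only differences are cosmetic: the paper isolates the identification $I(Bp)=I(B)p$ as a separate proposition (proved exactly as you prove it inline), and it omits the properness check that you supply explicitly.
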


    \begin{proof}
        The automorphism $\alpha$ not being residually properly outer means, by definition, that there is an $\alpha$-invariant ideal $J \triangleleft A$ such that the *-automorphism $\alpha_0 : A/J \to A/J$ induced by $\alpha$ on $A/J$ is not properly outer. Let $\tilde{\alpha}_0 : I(A/J) \to I(A/J)$ denote the unique extension of $\alpha_0$ to a *-automorphism of $I(A/J)$. Then there is a $\tilde{\alpha}_0$-invariant central projection $p$ such that the restriction $\tilde{\alpha}_0|_{I(A/J)p}$ is inner. 

        Let $\pi : A \to I(A/J)p$ denote the *-homomorphism obtained by composing the quotient *-homomorphism $A \to A/J$ with the compression *-homomorphism $A/J \to I(A/J)p$. Let $I = \ker \pi$. Then by Proposition~\ref{prop:injective_envelope_of_central_quotient},
        \[ I(A/I) = I((A/J)p) = I(A/J)p. \]
        Hence from above, the *-automorphism induced by $\alpha$ on $A/I$ is quasi-inner.
    \end{proof}
    
    It is natural to ask if the ideal $I$ in Proposition \ref{prop:not_residually_outer_iff_quasi_inner} can be chosen maximal or at least maximal with the property of being $\alpha$-invariant. However, this is not possible in general. This may seem surprising in light of the fact that if $\alpha$ is inner, then (1) every ideal of $A$ is $\alpha$-invariant and (2) the *-automorphism induced by $\alpha$ on a quotient by an $\alpha$-invariant ideal is inner. However, the next two examples demonstrate that neither of these properties holds in general for quasi-inner *-automorphisms.

    \begin{example}
    \label{ex:quasi_innerness_ideals_not_invariant}
        Let $\theta : X \to X$ be a free homeomorphism of a compact Hausdorff space $X$, and let $\alpha : \rC(X) \to \rC(X)$ denote the corresponding *-automorphism on the C*-algebra $\rC(X)$. Choose a faithful representation $\rC(X) \subseteq \B(H)$ such that $\alpha$ extends to an inner *-automorphism $\tilde{\alpha} : \B(H) \to \B(H)$ of $\B(H)$, i.e. such that there is a unitary $u \in \B(H)$ such that $\alpha(f) = ufu^*$ for all $f \in \rC(X)$. This can be done, for example, by identifying $\alpha$ with a $\mathbb{Z}$-action $\alpha : \Z \curvearrowright \rC(X)$, then choosing  a faithful representation of the crossed product $\rC(X) \rtimes_{\alpha,\lambda} \Z$. Let $A = \mathrm{K}(H) + \rC(X)$, where $\mathrm{K}(H)$ denotes the compact operators on $H$. Then $A$ is a C*-algebra. Let $\beta = \tilde{\alpha}|_A$. Then $\beta$ is a *-automorphism. Since $I(\mathrm{K}(H)) = \B(H)$, it follows that $I(A) = \B(H)$. In particular, $\beta$ is quasi-inner. However, it is clear that ideals of $A$ of the form $\mathrm{K}(H) + \rC_0(X \setminus \{x\})$ are not $\beta$-invariant.
    \end{example}

    \begin{example}
    \label{ex:quasi_innerness_doesnt_pass_quotients}
        Let $B$ be a simple C*-algebra, and let $\alpha : B \to B$ be an outer *-automorphism. Then $\alpha$ is actually properly outer (see e.g. \cite[Corollary~7.8]{hamana85-injective_envelopes_equivariant}). As in the previous example, we can choose a faithful representation $B \subseteq \B(H)$ such that $\alpha$ extends to an inner *-automorphism on $\B(H)$, and by restricting this extension to the C*-algebra $A = B + \mathrm{K}(H)$, we obtain a quasi-inner *-automorphism of $A$. However, $\mathrm{K}(H)$ is an $\alpha$-invariant ideal of $A$, and the *-automorphism induced by $\alpha$ on $A/\mathrm{K}(H) = B$ is not quasi-inner by the initial assumption on $B$. 
    \end{example}

    We will require the following result about injective envelopes in Section \ref{sec:counterexample_constructions}. 

    \begin{proposition}
    \label{prop:injective_envelope_of_central_quotient}
        Let $A$ be a C*-algebra and let $p \in I(A)$ be a central projection. Then $I(Ap) = I(A)p$.
    \end{proposition}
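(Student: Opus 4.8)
The plan is to apply the rigidity characterization of the injective envelope recalled above: if $C$ is an injective C*-algebra containing a C*-algebra $D$ with the inclusion $D \subseteq C$ rigid, then $C = I(D)$. I will run this with $D = Ap$ and $C = I(A)p$. First note that, since $p$ is central in $I(A)$, the compression $a \mapsto ap$ is a $*$-homomorphism of $A$ onto $Ap$, so $Ap$ is a C*-subalgebra of $I(A)p$; moreover $p = 1_A p \in Ap$ is a unit for it, so $Ap \subseteq I(A)p$ is a unital inclusion of unital C*-algebras (I treat the unital case, which is the one used in the paper). It therefore suffices to establish two things: (i) $I(A)p$ is injective, and (ii) the inclusion $Ap \subseteq I(A)p$ is rigid.

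For (i), I would use that $I(A)p$ is the image of the completely positive idempotent map $x \mapsto px$ on $I(A)$, whose value at the unit is the unit $p$ of $I(A)p$. Since the range of a completely positive projection on an injective C*-algebra is again injective (equivalently, since $I(A) \cong I(A)p \directsum I(A)(1-p)$ and a direct summand of an injective C*-algebra is injective), it follows that $I(A)p$ is injective.

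For (ii), let $\phi : I(A)p \to I(A)p$ be a unital completely positive map with $\phi|_{Ap} = \id_{Ap}$; I must show $\phi = \id$. The key idea is to promote $\phi$ to a self-map of all of $I(A)$ that fixes $A$, so that rigidity of $A \subseteq I(A)$ can be invoked. Using the central decomposition, define $\Phi : I(A) \to I(A)$ by $\Phi(x) = \phi(xp) + x(1-p)$. Then $\Phi$ is unital completely positive, being the sum of the completely positive map $x \mapsto \phi(xp)$ landing in $I(A)p$ and the $*$-homomorphism $x \mapsto x(1-p)$ landing in the orthogonal summand $I(A)(1-p)$, with $\Phi(1) = \phi(p) + (1-p) = 1$. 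For $a \in A$ we have $ap \in Ap$, so $\Phi(a) = \phi(ap) + a(1-p) = ap + a(1-p) = a$; that is, $\Phi|_A = \id_A$. By rigidity of $A \subseteq I(A)$ this forces $\Phi = \id_{I(A)}$. Finally, for $y \in I(A)p$ we have $y = yp$ and $y(1-p) = 0$, whence $\phi(y) = \Phi(y) = y$, so $\phi = \id_{I(A)p}$ and the inclusion is rigid.

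Combining (i) and (ii) with the rigidity characterization yields $I(Ap) = I(A)p$. The step I expect to require the most care is the extension construction in (ii): it relies essentially on $p$ being \emph{central}, so that compression by $p$ and by $1-p$ are both $*$-homomorphisms landing in orthogonal summands, which is precisely what allows a map fixing only $Ap$ to be repaired into a map fixing all of $A$. Beyond verifying that $\Phi$ is genuinely unital completely positive, I anticipate no serious obstacle.
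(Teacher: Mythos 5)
Your proof is correct and follows essentially the same route as the paper: both establish injectivity of $I(A)p$ and then verify rigidity of the inclusion $Ap \subseteq I(A)p$ by extending a given unital completely positive map $\phi$ to $\Phi(x) = \phi(xp) + x(1-p)$ on $I(A)$ and invoking rigidity of $A \subseteq I(A)$. Your write-up simply supplies more detail on the points the paper leaves as "clear."
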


    \begin{proof}
        It is clear that $I(A)p$ is injective, so it suffices to prove that the inclusion $Ap \subseteq I(A)p$ is rigid. For this, let $\phi : I(A)p \to I(A)p$ be a unital completely positive map such that $\phi|_{Ap} = \id_{Ap}$. Then the unital completely positive map $\psi : I(A) \to I(A)$ defined by $\psi(a) = \phi(ap) + a(1-p)$ satisfies $\psi|_A = \id_A$, so by the rigidity of the inclusion $A \subseteq I(A)$, $\psi = \id_{I(A)}$. It follows that $\phi = \id_{I(A)p}$. 
    \end{proof} 

    \section{Sufficient condition for a Galois correspondence}
    \label{sec:free_action_correspondence}
    
    In this section we will prove Theorem~\ref{thmintro:main}, which asserts that if $\alpha : \Gamma \curvearrowright A$ is an action of a discrete group $\Gamma$ on a unital C*-algebra $A$ by *-automorphisms such that $\Gamma$ has the approximation property and either $\alpha$ is pointwise residually properlly outer or $\card{\Gamma} \leq 2$, then every intermediate C*-algebra for the inclusion $A \subseteq A \rtimes_{\alpha,\lambda} \Gamma$ is the reduced crossed product of a partial subaction of $\alpha$.

    As mentioned in the introduction, the key technical step in the proof is an averaging argument to show that if $B$ is an intermediate C*-algebra for the inclusion $A \subseteq A \rtimes_{\alpha,\lambda} \Gamma$ and $b \in B$ has Fourier series $b \sim \sum_{g \in \Gamma} b_g \lambda_g$, then $b_g \lambda_g \in B$ for all $g \in \Gamma$.

    We now give a rough sketch of this argument. First, using the pointwise residual proper outerness of $\alpha$, we will show that if the support of $b \in A \rtimes_{\alpha,\lambda} \Gamma$ contains at most one element of $\Gamma \setminus \{e\}$, then we can ``average'' $b$ arbitrarily close to the conditional expectation $E(b)$ using certain contractive $A$-bimodule operations. The contractivity means that this process can be iterated, and using this we will be able to show that we can average any finitely supported element $b \in A \rtimes_{\alpha,\lambda} \Gamma$ arbitrarily close to $E(b)$. It will follow immediately from the density of the finitely supported elements in $A \rtimes_{\alpha,\lambda} \Gamma$ that this can be done for arbitrary elements.
    
    Once we have shown that every $b \in A \rtimes_{\alpha,\lambda} \Gamma$ can be averaged arbitrarily close to $E(b)$, an easy translation argument will show that if $b$ has Fourier series $b \sim \sum_{g \in \Gamma} b_g \lambda_g$, then $b$ can also be averaged arbitrarily close to $b_g \lambda_g$ for all $g \in \Gamma$. If $B$ is an intermediate C*-algebra for the inclusion $A \subseteq A \rtimes_{\alpha,\lambda} \Gamma$, then $B$ is an $A$-bimodule, so it will follow from this that $b_g \lambda_g \in B$ for all $g \in \Gamma$.

    For the above averaging argument, we will utilize a particular kind of $A$-bimodule operation arising in the theory of C*-convexity.

    \begin{definition}
        Let $A \subseteq B$ be a unital inclusion C*-algebras. An $A$-bimodule $K \subseteq B$ is \emph{absolutely $A$-convex} if for every $n \geq 1$, every sequence $(x_i)_{i=1}^n$ in $K$ and every pair of sequences $(a_i)_{i=1}^n$ and $(b_i)_{i=1}^n$ in $A$ satisfying  $\sum_{i=1}^n a_i a_i^* \leq 1$ and $\sum_{i=1}^n b_i^* b_i \leq 1$, we have
        \[
            \sum_{i=1}^n a_i x_i b_i \in K
        \]
        We will refer to the above expression as an \emph{absolute $A$-convex combination}.
    \end{definition}

    First, we make the easy observation that an absolute $A$-convex combination of a sequence of contractions is a contraction. 

    \begin{lemma}
    \label{lem:absolutely_convex_combination_contractive}
        Let $A \subseteq B$ be a unital inclusion C*-algebras. If a sequence $(x_i)_{i=1}^n$ in $B$ satisfies $\|x_i\| \leq 1$ for all $i$, then any absolute $A$-convex combination $\sum_{i=1}^n a_i x_i b_i$ satisfies $\|\sum_{i=1}^n a_i x_i b_i\| \leq 1$.
    \end{lemma}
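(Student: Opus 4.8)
The plan is to realize the absolute $A$-convex combination $\sum_{i=1}^n a_i x_i b_i$ as a product of three operators of norm at most $1$ inside the matrix amplification $M_n(B)$, and then to invoke submultiplicativity of the C*-norm. First I would view $B$ as a C*-subalgebra of $M_n(B)$ in the standard way and introduce the row, diagonal, and column elements
\[
    R = (a_1, \ldots, a_n) \in M_{1,n}(B), \quad D = \operatorname{diag}(x_1, \ldots, x_n) \in M_n(B), \quad C = (b_1, \ldots, b_n)^{\mathsf{T}} \in M_{n,1}(B),
\]
so that a direct computation of the matrix product gives $\sum_{i=1}^n a_i x_i b_i = R D C$, an element of $M_{1,1}(B) = B$.

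Next I would estimate each of the three factors separately. Using the C*-identity in the appropriate matrix algebra, the row $R$ satisfies $\norm{R}^2 = \norm{R R^*} = \norm{\sum_{i=1}^n a_i a_i^*} \leq 1$, and likewise the column $C$ satisfies $\norm{C}^2 = \norm{C^* C} = \norm{\sum_{i=1}^n b_i^* b_i} \leq 1$; these use precisely the two standing hypotheses $\sum_i a_i a_i^* \leq 1$ and $\sum_i b_i^* b_i \leq 1$. The diagonal element $D$ is normal with $\norm{D} = \max_{1 \leq i \leq n} \norm{x_i} \leq 1$, since the norm of a block-diagonal operator is the maximum of the norms of its blocks. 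Submultiplicativity of the operator norm then yields
\[
    \norm{\sum_{i=1}^n a_i x_i b_i} = \norm{R D C} \leq \norm{R}\,\norm{D}\,\norm{C} \leq 1,
\]
which is the desired conclusion.

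There is essentially no serious obstacle here; the only point requiring a moment of care is the identification of the row and column norms with the C*-norms of $\sum_i a_i a_i^*$ and $\sum_i b_i^* b_i$ respectively, which is immediate once one applies the C*-identity in $M_n(B)$. Should one prefer to avoid matrix amplifications altogether, the same bound can be extracted directly from the operator Cauchy--Schwarz inequality applied to $\sum_i a_i x_i b_i$, but the matrix formulation above is the cleanest and makes the role of the two hypotheses entirely transparent.
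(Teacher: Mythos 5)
Your proof is correct and follows essentially the same route as the paper's: both factor the combination as a row--diagonal--column product in a matrix amplification, bound each factor by $1$ via the C*-identity and the block-diagonal norm formula, and conclude by submultiplicativity.
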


    \begin{proof}
        Let
        \[
        A = \begin{bmatrix} a_1 & \cdots & a_n \end{bmatrix},\quad B = \begin{bmatrix} b_1 \\ \vdots \\ b_n \end{bmatrix},\quad X = \begin{bmatrix} x_1 & & \\ & \ddots & \\ & & x_n \end{bmatrix}.
        \]
        Then the conditions $\sum_{i=1}^n a_i a_i^* \leq 1$, $\sum_{i=1}^n b_i^* b_i \leq 1$ and $\|x_i\| \leq 1$ for all $i$ are equivalent to $\|A\| \leq 1$, $\|B\| \leq 1$ and $\|X\| \leq 1$ respectively. Hence
        \[
        \left\| \sum_{i=1}^n a_i x_i b_i \right\| = \|A X B\| \leq \|A\|\|X\|\|B\| \leq 1. \qedhere
        \]
    \end{proof}

    We will prove a strong separation result for the absolutely $A$-convex sets that arise in our setting. For this, we will first require the following specialization of a separation theorem of Magajna \cite[Theorem~1.1(ii)]{magajna_bimodules}. Magajna's theorem applies to bimodules where the left and right C*-algebra are allowed to differ, which we do not require.

    \begin{proposition}
    \label{prop:magajna_separation}
        Let $A \subseteq B$ be a unital inclusion C*-algebras and let $K \subseteq B$ be a a norm-closed $A$-absolutely convex set. Suppose $y \in B \setminus K$. Then there is a Hilbert space $H$, a unital *-homomorphism $\pi : A \to \B(H)$ and a completely bounded $A$-bimodule map $\phi : B \to \B(H)$ with the property that $\|\phi(x)\| \leq 1$ for all $x \in K$ but $\|\phi(y)\| > 1$.
    \end{proposition}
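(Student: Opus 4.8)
The plan is to deduce this directly from Magajna's separation theorem \cite[Theorem~1.1(ii)]{magajna_bimodules} by specializing to the case where the left and right acting C*-algebras both equal $A$; the work is almost entirely in matching up the definitions and in repackaging Magajna's two-sided conclusion into the one-representation form demanded here. First I would regard $B$ as an operator $A$-$A$-bimodule, with $A$ acting on both sides by multiplication through the inclusion $A \subseteq B$ (concretely, after fixing a faithful unital representation $B \subseteq \B(H_0)$). The point to verify is that our notion of absolute $A$-convexity coincides with Magajna's notion of bimodule absolute convexity: an absolute bimodule combination in his sense is an expression $\sum_i a_i x_i b_i$ in which the row $[a_1 \ \cdots \ a_n]$ and the column $[b_1 \ \cdots \ b_n]^{t}$ are contractions, and a row (resp.\ column) is a contraction precisely when $\sum_i a_i a_i^* \leq 1$ (resp.\ $\sum_i b_i^* b_i \leq 1$). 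Thus $K$ is a norm-closed absolutely convex $A$-$A$-subbimodule of $B$ in Magajna's sense, with $y \in B \setminus K$, and his theorem applies.

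Applying \cite[Theorem~1.1(ii)]{magajna_bimodules} then produces Hilbert spaces $H_1, H_2$, unital *-representations $\pi_1 : A \to \B(H_1)$ and $\pi_2 : A \to \B(H_2)$ (which we may assume unital since $A$ is unital), and a completely bounded bimodule map $\psi : B \to \B(H_2, H_1)$, meaning $\psi(a x b) = \pi_1(a)\psi(x)\pi_2(b)$ for $a,b \in A$ and $x \in B$, satisfying the separation estimate $\norm{\psi(x)} \leq 1$ for all $x \in K$ while $\norm{\psi(y)} > 1$. The only discrepancy with the desired statement is that this yields \emph{two} representations of $A$, on possibly different Hilbert spaces, whereas the proposition asks for a single representation $\pi$ and a map $\phi$ valued in $\B(H)$ for one Hilbert space $H$.

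To reconcile this, I would set $H = H_1 \oplus H_2$, take $\pi = \pi_1 \oplus \pi_2 : A \to \B(H)$, which is again a unital *-homomorphism, and define $\phi : B \to \B(H)$ to be the upper-right corner embedding
\[
    \phi(x) = \begin{pmatrix} 0 & \psi(x) \\ 0 & 0 \end{pmatrix}.
\]
A direct block computation gives $\pi(a)\phi(x)\pi(b) = \phi(axb)$ for all $a,b \in A$ and $x \in B$, so $\phi$ is an $A$-bimodule map, and it is completely bounded because $\psi$ is and the corner embedding is completely isometric. Since the off-diagonal corner preserves operator norm, $\norm{\phi(x)} = \norm{\psi(x)}$ for every $x \in B$, whence $\norm{\phi(x)} \leq 1$ on $K$ and $\norm{\phi(y)} > 1$, as required.

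The main obstacle is not any hard estimate but the bookkeeping: one must confirm carefully that the definition of absolute $A$-convexity used in this paper is exactly Magajna's bimodule absolute convexity (so that his hypotheses are literally satisfied), and that the bimodule-map identity $\psi(axb) = \pi_1(a)\psi(x)\pi_2(b)$ survives the corner construction to give a genuine $A$-bimodule map over the single representation $\pi$. Both are routine once the definitions are aligned, so the entire proposition reduces to invoking Magajna's theorem and performing this direct-sum repackaging.
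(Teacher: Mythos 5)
Your proposal is correct and follows essentially the same route as the paper: invoke Magajna's two-sided separation theorem \cite[Theorem~1.1(ii)]{magajna_bimodules} to obtain two representations and a separating bimodule map, then repackage via the direct sum $H_1 \oplus H_2$ and the off-diagonal corner embedding to get a single representation $\pi$ and an $A$-bimodule map $\phi$ with the same norms. Your write-up is, if anything, slightly more careful than the paper's in tracking the two Hilbert spaces and verifying that the corner embedding preserves the norm and the bimodule identity.
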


    Note that the assertion $\phi$ is an $A$-bimodule map means that $\phi(axb) = \pi(a)\phi(x)\pi(b)$ for $a,b \in A$ and $x \in B$. 
    
    \begin{proof}
        Applying Magajna's two-sided separation theorem \cite[Theorem~1.1(ii)]{magajna_bimodules}, we obtain a Hilbert space $H_0$, unital *-homomorphisms $\pi_1 : A \to \B(H)$ and $\pi_2 : A \to \B(H_0)$, and a completely bounded $A-A$-module map $\phi_0 : B \to \B(H)$  with the property that $\|\phi_0(x)\| \leq 1$ for all $x \in K$ but $\|\phi(y)\| > 1$. Note that the assertion $\phi_0$ is an $A-A$ module map means that $\phi_0(a x b) = \pi_1(a) \phi_0(x) \pi_2(b)$ for $a,b \in A$ and $x \in B$.
        
        Let $H = H_0 \oplus H_0$ and define a new *-homomorphism $\pi : A \to \B(H)$ by
        \[ \pi(a) = \begin{bmatrix} \pi_1(a) & 0 \\ 0 & \pi_2(a) \end{bmatrix} \quad \text{for} \quad a \in A. \]
        Similarly, define a completely bounded map $\phi : B \to \B(H)$ by
        \[ \phi(b) = \begin{bmatrix} 0 & \Phi(b) \\ 0 & 0 \end{bmatrix} \quad \text{for} \quad b \in B. \]
        Then it is easy to verify that $\phi$ is an $A$-bimodule satisfying the desired properties.
    \end{proof}

    The $A$-convex sets that we will consider will be subsets of intermediate C*-algebras for the inclusion $A \subseteq A \rtimes_{\alpha,\lambda} \Gamma$, and the elements that we will be interested in will belong to $A$ itself. In this setting, we are able to obtain a significantly stronger separation result. 

    \begin{proposition}
    \label{prop:starhom_separation}
        Let $A \subseteq B$ be a unital inclusion of C*-algebras and let $K \subseteq B$ be a norm-closed absolutely $A$-convex set. Suppose $y \in A \cap (B \setminus K)$. Then there is a Hilbert space $H$, a unital *-homomorphism $\pi : A \to \B(H)$ and a completely bounded $A$-bimodule map $\phi : B \to \B(H)$ with $\phi|_A = \pi$ such that $\|\phi(x)\| \leq \xi$ for all $x \in K$ but $\|\phi(y)\| > \xi$ for some constant $\xi > 0$.
    \end{proposition}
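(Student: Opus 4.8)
The plan is to start from the separating map produced by Magajna's theorem and then \emph{correct} its restriction to $A$ into a genuine $*$-homomorphism. Concretely, apply Proposition~\ref{prop:magajna_separation} to obtain a Hilbert space $H_0$, a unital $*$-homomorphism $\pi_0 \colon A \to \B(H_0)$ and a completely bounded $A$-bimodule map $\phi_0 \colon B \to \B(H_0)$ with $\norm{\phi_0(x)} \le 1$ for $x \in K$ and $\norm{\phi_0(y)} > 1$. The only thing preventing us from taking $(\pi,\phi) = (\pi_0,\phi_0)$ is that $\phi_0|_A$ need not be multiplicative, but the bimodule identity pins down this defect exactly: writing $T \defeq \phi_0(1)$ we have $\phi_0(a) = \phi_0(a\cdot 1) = \pi_0(a) T$ and likewise $\phi_0(a) = T \pi_0(a)$, so $T \in \pi_0(A)'$ and $\phi_0|_A = \pi_0(\cdot)\,T$. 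In particular, because $y \in A$, the separation lives entirely in the ``$T$-direction'': $\phi_0(y) = \pi_0(y) T$. The strategy is therefore to divide out $T$.

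First I would reduce to a positive defect. Take the polar decomposition $T = U\abs{T}$ with $U, \abs{T} \in \pi_0(A)'$, and replace $\phi_0$ by $\phi_1 \defeq U^* \phi_0(\cdot)$. Since $U^* \in \pi_0(A)'$, this is again a completely bounded $A$-bimodule map over $\pi_0$, with $\norm{\phi_1(x)} \le 1$ on $K$, and now $\phi_1|_A = \pi_0(\cdot)\abs{T}$ with $\abs{T} \ge 0$. One checks that $\norm{\phi_1(y)} = \norm{\pi_0(y)\abs{T}} = \norm{\phi_0(y)} > 1$ (the range of $\phi_0(y)$ already lies in the final projection of $U$, on which $U^*$ is isometric). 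Next, to cancel $\abs{T}$, I would conjugate: for a spectral projection $p_\delta \defeq \chi_{[\delta,\infty)}(\abs{T}) \in \pi_0(A)'$, on which $\abs{T}^{-1/2}$ is bounded, set $\pi \defeq \pi_0|_{p_\delta H_0}$ and let $\phi(b) \defeq \abs{T}^{-1/2}\phi_1(b)\abs{T}^{-1/2}$ compressed to $p_\delta H_0$. Because $\abs{T}^{\pm 1/2}$ commute with $\pi_0(A)$, this $\phi$ is a completely bounded $A$-bimodule map over $\pi$, and the two $\abs{T}^{-1/2}$ factors exactly absorb the $\abs{T}$ in $\phi_1|_A$, so that $\phi|_A = \pi$ is a genuine unital $*$-homomorphism and $\phi(y) = \pi(y) = \pi_0(y)|_{p_\delta H_0}$.

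The hard part is that this conjugation inflates norms: one only gets $\norm{\phi(x)} \le \norm{\abs{T}^{-1/2}p_\delta}^2\,\norm{\phi_1(x)} \le \delta^{-1}$ for $x \in K$, while $\norm{\phi(y)} = \norm{\pi_0(y)p_\delta}$ is a priori uncontrolled and could be small, so a careless choice of $\delta$ destroys the separation. The whole point is to choose $\delta$ well, and this is where $y \in A$ (hence $\phi_1(y) = \pi_0(y)\abs{T}$ with $\pi_0(y)$ commuting with $\abs{T}$) is used decisively. The needed input is an elementary spectral lemma: if $S \ge 0$ commutes with $Y$ and $\norm{YS} > 1$, then there is a $\delta > 0$ with $\norm{Y\,\chi_{[\delta,\infty)}(S)} > \delta^{-1}$. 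To see this, cut the spectrum of $S$ into geometric layers $\chi_{[\,r^{-k-1},\,r^{-k})}(S)$ for $r > 1$ close to $1$; since $Y$ commutes with every spectral projection of $S$, the operator $YS$ is block diagonal across these layers, so $\norm{YS}$ is the supremum of the block norms. As $\norm{YS} > 1$, some layer near a value $\lambda$ satisfies $\norm{Y|_{\text{layer}}} > \lambda^{-1}$, and taking $\delta$ to be the bottom of that layer yields $\norm{Y\,\chi_{[\delta,\infty)}(S)} > \delta^{-1}$ once $r$ is chosen close enough to $1$.

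Applying the lemma with $Y = \pi_0(y)$ and $S = \abs{T}$ (using $\norm{\pi_0(y)\abs{T}} > 1$) produces a threshold $\delta$ with $\norm{\pi_0(y)p_\delta} > \delta^{-1}$. Setting $\xi \defeq \delta^{-1}$ then gives $\norm{\phi(x)} \le \xi$ for all $x \in K$ and $\norm{\phi(y)} = \norm{\pi_0(y)p_\delta} > \xi$, with $\phi$ a completely bounded $A$-bimodule map restricting to the unital $*$-homomorphism $\pi$ on $A$, as required.
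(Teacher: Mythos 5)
Your proof is correct, and while its first half coincides with the paper's --- both invoke Proposition~\ref{prop:magajna_separation}, identify the defect $c \defeq \phi_0(1) \in \pi_0(A)'$ from the bimodule identity, and reduce to $c \geq 0$ by the same polar-decomposition computation --- the second half takes a genuinely different route. The paper passes to the commutative C*-algebra $\ca(c,1) \isoto \rC(Z)$, uses central support projections in the bidual (via Cohen--Hewitt) to build fiber maps $\sigma_z$, locates a point $z_0$ with $\norm{\sigma_{z_0}(\pi_0(y))}\,c(z_0) > 1$ from the fiberwise norm formula $\norm{\pi_0(y)c} = \sup_z \norm{\sigma_z(\pi_0(y))c(z)}$, and rescales the compression $\sigma_{z_0} \circ \phi_0$ by the scalar $c(z_0)^{-1}$, so that the resulting $\pi$ is a further quotient of $\pi_0$. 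You instead stay inside $\B(H_0)$: you cut down to the spectral subspace of $p_\delta = \chi_{[\delta,\infty)}(c)$ and conjugate by $c^{-1/2}p_\delta$, so your $\pi$ is a subrepresentation of $\pi_0$, and the separation survives because of your layering lemma, which is correct as stated: writing $\norm{\pi_0(y)c} = 1+\epsilon$, some layer $q_k = \chi_{[r^{-k-1},\,r^{-k})}(c)$ satisfies $\norm{\pi_0(y)c\,q_k} > 1+\epsilon/2$, hence $\norm{\pi_0(y)q_k} > (1+\epsilon/2)r^k$, which exceeds $\delta^{-1} = r^{k+1}$ once $r < 1+\epsilon/2$. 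Your version is more elementary (no biduals, no central supports, no Cohen--Hewitt) and yields an explicit $\xi = \delta^{-1}$; the paper's version keeps $\phi$ as a scalar multiple of a compression of $\phi_0$ rather than a two-sided conjugation, though nothing in the application (Lemma~\ref{lem:average_one_element_to_expectation}) depends on that distinction. One cosmetic remark: your justification that $\norm{U^*\phi_0(y)} = \norm{\phi_0(y)}$ via the final projection of $U$ is the right idea but slightly misstated; the clean statement is that $U^*U$ is the projection onto $\closure{\ran \abs{T}}$, so $U^*\phi_0(y) = U^*U\abs{T}\pi_0(y) = \abs{T}\pi_0(y)$, which is exactly the paper's computation $\norm{u^*c\,\pi_0(y)} = \norm{\pi_0(y)^*c^*c\,\pi_0(y)}^{1/2} = \norm{c\,\pi_0(y)}$.
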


    \begin{proof}
        Applying Proposition \ref{prop:magajna_separation}, we obtain a Hilbert space $H_0$, a unital *-homomorphism $\pi_0 : A \to \B(H)$ and a completely bounded $A$-bimodule map $\phi_0 : B \to \B(H)$ with the property that $\|\phi_0(x)\| \leq 1$ for all $x \in K$ but $\|\phi_0(y)\| > 1$. Let $c = \phi_0(1)$. The fact that $\phi_0$ is an $A$-bimodule map implies that $c \in \pi_0(A)'$. We will first show that we can assume $c \geq 0$. 
        
        Let $c = u|c|$ be the polar decomposition of $c$ and note that $u \in \pi_0(A)'$. Define $\phi_1 : B \to \B(H)$ by $\phi_1(b) = u^*\phi_0(b)$. Then $\phi_1$ is a completely bounded $A$-bimodule map. Furthermore, for $x \in K$,
        \[ \|\phi_1(x)\| = \|u^*\phi_0(x)\| \leq \|\phi_0(x)\| \leq 1, \]
        while the fact that $y \in A$ implies
        \[ \|\phi_1(y)\| = \|u^* c \pi_0(y)\| = \|\pi_0(y)^* c^*c \pi_0(y)\|^{1/2} = \|c \pi_0(y)\| = \|\phi_0(y)\| > 1. \]
        Therefore, by replacing $\phi_0$ by $\phi_1$, we can assume that $c \geq 0$.

        Let $Z$ denote the spectrum of the commutative C*-algebra $\ca(z,1)$ and identify $\ca(z,1)$ with $\rC(Z)$. Then letting $C = \ca(\pi_0(A),z)$, $\rC(Z) \subseteq \rZ(C) \subseteq C$. For $z \in Z$, let $I_z \triangleleft C$ denote the ideal in $C$ generated by the corresponding maximal ideal in $\rC(Z)$. By the Cohen-Hewitt factorization theorem, $I_z = C \cdot \rC_0(Z \setminus \{z\})$. Let $p_z \in C^{**}$ denote the central support projection for $I_z$, and note that we may view $C^{**} \subseteq \B(H)^{**}$. Let $\sigma_z : \B(H) \to p_z \B(H)^{**} p_z$ denote compression by $p_z$. Note that the restriction $\sigma_z|_C$ is a *-homomorphism from $C$ to $C^{**}p_z$ with $\ker \sigma_z|_C = I_z$.

        The norm of $\phi_0(y)$ can be computed as
        \[ \|\phi_0(y)\| = \|\pi_0(y) c \| = \sup_{z \in Z} \|\sigma_z(\pi_0(y)) c(z)\|. \]
        Since $\|\phi_0(y)\| > 1$, it follows that $\|\sigma_{z_0}(\pi_0(y)) c(z_0)\| > 1$ for some $z_0 \in Z$. Note in particular that $c(z_0) > 0$.

        We now define the completely bounded map $\phi : B \to p_{z_0} \B(H)^{**} p_{z_0}$ by
        \[ \phi(b) = c(z_0)^{-1} \sigma_{z_0}(\phi_0(b)) \quad \text{for} \quad b \in B. \]
        Letting $\xi = c(z_0)^{-1}$, it is easy to see that $\|\phi(x)\| \leq \xi$ for all $x \in K$, while $\|\phi(y)\| > \xi$. Furthermore, for $a \in A$,
        \[
            \phi(a) = c(z_0)^{-1} \sigma_{z_0}(\phi_0(a)) = c(z_0)^{-1} \sigma_{z_0}(\pi_0(a)c) = \sigma_{z_0}(\pi_0(a)),
        \]
        so $\phi|_A$ is a *-homomorphism and we may define $\pi = \phi|_A$. This also lets us show that for $a \in A$ and $b \in B$,
        \begin{align*}
            \phi(a) \phi(b) &= \sigma_{z_0}(\pi_0(a)) \cdot c(z_0)^{-1} \sigma_{z_0}(\phi_0(b)) = c(z_0)^{-1} \sigma_{z_0}(\pi_0(a) \phi_0(b)) \\
            &= c(z_0)^{-1} \sigma_{z_0}(\phi_0(ab)) = \phi(ab),
        \end{align*}
        where we have used the fact that $\pi_0(a)$ lies in the multiplicative domain of the completely positive map $\sigma_{z_0}$, which restricts to a *-homomorphism on $C$ from above. Similarly, $\phi(b) \phi(a) = \phi(ba)$, and we conclude that $\phi$ is an $A$-bimodule map.
    \end{proof}

    We now apply our separation result, Proposition \ref{prop:starhom_separation}, to show that if $b \in A \rtimes_{\alpha,\lambda} \Gamma$ is supported on at most two elements of $\Gamma$, then we can ``average'' $b$ arbitrarily close to the conditional expectation $E(b)$ using absolute $A$-convex combinations.

    \begin{lemma}
    \label{lem:average_one_element_to_expectation}
        Let $\alpha : \Gamma \curvearrowright A$ be an action of a discrete group $\Gamma$ on a unital C*-algebra $A$. For $g \in G \setminus \{e\}$, suppose that $\alpha_g$ is residually properly outer. Let $b \in A \rtimes_{\alpha,\lambda} \Gamma$ be an element of the form $b = b_e + b_g \lambda_g$ for $b_e,b_g \in A$. Then $b_e = E(b)$ belongs to the norm closure of the absolute $A$-convex hull of $b$.
    \end{lemma}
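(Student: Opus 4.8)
The plan is to argue by contradiction using the strong separation result Proposition~\ref{prop:starhom_separation}, and then to reach a contradiction by an averaging argument that exploits the residual proper outerness of $\alpha_g$. First, let $K$ denote the norm closure of the absolute $A$-convex hull of $b$. By Lemma~\ref{lem:absolutely_convex_combination_contractive} this hull is bounded, and it is routine that it is absolutely $A$-convex, so $K$ is a norm-closed absolutely $A$-convex set with $b \in K$. Suppose toward a contradiction that $b_e = E(b) \notin K$. Since $b_e \in A \cap \left( (A\rtimes_{\alpha,\lambda}\Gamma) \setminus K \right)$, Proposition~\ref{prop:starhom_separation} furnishes a Hilbert space $H$, a unital $*$-homomorphism $\pi : A \to \B(H)$, a completely bounded $A$-bimodule map $\phi : A\rtimes_{\alpha,\lambda}\Gamma \to \B(H)$ with $\phi|_A = \pi$, and a constant $\xi > 0$ such that $\|\phi(x)\| \le \xi$ for all $x \in K$ while $\|\phi(b_e)\| = \|\pi(b_e)\| > \xi$.

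Next, set $w = \phi(\lambda_g) \in \B(H)$. Applying $\phi$ to the covariance identity $\lambda_g a = \alpha_g(a)\lambda_g$ and using that $\phi$ is an $A$-bimodule map with $\phi|_A = \pi$ yields the intertwining relation
\[
    w\,\pi(a) = \pi(\alpha_g(a))\,w \qquad \text{for all } a \in A.
\]
Moreover $\phi(b) = \pi(b_e) + \pi(b_g)\,w$, and for any positive contraction $h \in A$ the element $hbh$ is an absolute $A$-convex combination of $b$ (take $a_1 = b_1 = h$), hence lies in $K$, so
\[
    \left\| \pi(h)\,\pi(b_e)\,\pi(h) + \pi(h)\,\pi(b_g)\,w\,\pi(h) \right\| = \|\phi(hbh)\| \le \xi.
\]
Since $\|\pi(b_e)\| > \xi$, fix a unit vector $\zeta \in H$ with $\|\pi(b_e)\zeta\| > \xi$.

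The heart of the argument, and the step I expect to be the main obstacle, is to produce for each $\varepsilon > 0$ a positive contraction $h \in A$ with the two competing properties that $\pi(h)$ nearly fixes both $\zeta$ and $\pi(b_e)\zeta$, while $\|\pi(h)\,\pi(b_g)\,w\,\pi(h)\| < \varepsilon$. Granting such an $h$, one estimates
\[
    \xi \ge \left\| \bigl( \pi(h)\pi(b_e)\pi(h) + \pi(h)\pi(b_g)w\pi(h) \bigr)\zeta \right\| \ge \|\pi(h)\pi(b_e)\pi(h)\zeta\| - \varepsilon,
\]
and the approximate fixing of $\zeta$ and $\pi(b_e)\zeta$ gives $\pi(h)\pi(b_e)\pi(h)\zeta \approx \pi(b_e)\zeta$, whence $\xi \gtrsim \|\pi(b_e)\zeta\| > \xi$, a contradiction once $\varepsilon$ is taken smaller than the gap $\|\pi(b_e)\zeta\| - \xi$.

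It remains to establish the existence of such an $h$, which is a Kishimoto-type averaging phenomenon driven by proper outerness. Using $w\,\pi(h) = \pi(\alpha_g(h))\,w$, one rewrites $\pi(h)\pi(b_g)w\pi(h) = \pi\bigl(h\,b_g\,\alpha_g(h)\bigr)\,w$, so it suffices to keep $\pi(h)$ large on the finitely many prescribed vectors while forcing $\pi\bigl(h\,b_g\,\alpha_g(h)\bigr)$ small. Proper outerness of $\alpha_g$ reconciles these demands: it supplies positive contractions that behave like local approximate units yet whose $\alpha_g$-translates are nearly orthogonal to themselves, so the twisted product $h\,b_g\,\alpha_g(h)$ is negligible. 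The reason \emph{residual} proper outerness is needed, rather than mere proper outerness, is that $\pi$ need not be faithful: the relevant norm is detected modulo $\ker\pi$, so the averaging must be run in the quotient $A/J$, where $J$ is the largest $\alpha_g$-invariant ideal contained in $\ker\pi$, and residual proper outerness guarantees that the induced automorphism on $A/J$ remains properly outer, making the averaging available there. Assembling this averaging lemma, lifting the resulting elements to positive contractions in $A$, and feeding them into the estimate above completes the proof.
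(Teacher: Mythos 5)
Your first half coincides with the paper's argument: you take $K$ to be the closed absolute $A$-convex hull of $b$, assume $b_e\notin K$, invoke Proposition~\ref{prop:starhom_separation}, and derive the intertwining relation $w\pi(a)=\pi(\alpha_g(a))w$ for $w=\phi(\lambda_g)$; all of that is correct. From there you switch to a Kishimoto-type averaging strategy, and the step you yourself flag as the main obstacle is exactly where the argument has a genuine gap rather than a routine assembly. Two concrete problems. First, the largeness condition is misformulated: Kishimoto-type lemmas produce a positive norm-one $h$ localized in a small hereditary subalgebra with $\|hah\|$ close to $\|a\|$ for a prescribed positive $a$; they do not produce an $h$ such that $\pi(h)$ approximately fixes two vectors $\zeta$ and $\pi(b_e)\zeta$ chosen in advance (this part is repairable by choosing the vector after $h$). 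Second, and more seriously, there is a quotient mismatch. The smallness $\|\pi(h b_g \alpha_g(h))\|<\varepsilon$ must be certified in $A/I$ with $I=\ker\pi$, which need not be $\alpha_g$-invariant, so proper outerness is only available on $A/J$ for the largest invariant ideal $J\subseteq I$. Since $J\subseteq I$, smallness modulo $J$ does imply smallness modulo $I$; but the competing requirement $\|h b_e h + I\|>\xi$ points the wrong way: the element $h$ supplied by the Kishimoto property of $A/J$ is only guaranteed to have norm one modulo $J$ and to sit in a hereditary subalgebra of $A/J$, and nothing prevents its image from lying in $I/J$, in which case $\pi(h)=0$ and the largeness estimate collapses. (Commutative picture: $A/I=\rC(Y)$, $A/J=\rC(Z)$ with $Z$ the invariant closure of $Y$; a bump function on $Z$ disjoint from its translate may well be supported off $Y$.) Resolving this tension is the actual content of the lemma, and your sketch does not address it.

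The paper avoids averaging entirely after the separation step: it composes $\phi$ with a ucp map $\psi:\B(H)\to I(A/I)$ extending the identification $\pi(A)\cong A/I$, shows that $r=\psi(\phi(\lambda_g))$ is nonzero with $r^*r$ and $rr^*$ central in $I(A/I)$, extracts from the polar decomposition a unitary $u$ in a central corner $I(A/I)p$ satisfying $u\sigma(a)=\sigma(\alpha_g(a))u$, and concludes via Proposition~\ref{prop:injective_envelope_of_central_quotient} that $J=\ker(\sigma(\cdot)p)$ is an $\alpha_g$-invariant ideal on whose quotient $\alpha_g$ is quasi-inner, contradicting residual proper outerness directly. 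If you wish to pursue the averaging route instead, you would need to formulate and prove a relative Kishimoto lemma valid modulo a non-invariant ideal; this does not follow from proper outerness of the induced automorphism on $A/J$ alone.
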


    \begin{proof}
        Let $K \subseteq A \rtimes_{\alpha,\lambda} \Gamma$ denote the norm closure of the absolutely $A$-convex hull of $b$ and suppose for the sake of contradiction that $b_e = E(b) \notin K$. We will construct an $\alpha_g$-invariant ideal $J$ of $A$ such that the induced action of $\alpha_g$ on $A/J$ is not properly outer, contradicting the assumption that $\alpha_g$ is residually properly outer.
        
        Applying Proposition \ref{prop:starhom_separation}, there is a Hilbert space $H$, a unital *-homomorphism $\pi : A \to \B(H)$ and a completely bounded $A$-bimodule map $\phi : A \rtimes_{\alpha,\lambda} \Gamma \to \B(H)$ with $\phi|_A = \pi$ such that $\norm{\phi(x)} \leq \xi$ for all $x \in K$ but $\|\phi(b_e)\| > \xi$ for some constant $\xi > 0$.

        Let $I = \ker \pi$. By the injectivity of the injective envelope of the quotient $I(A/I)$, there is a unital completely positive map $\psi : \B(H) \to I(A/I)$ extending the *-isomorphism from $\pi(A)$ to $A/I$. Note that $\pi(A)$ belongs to the multiplicative domain of $\psi$. 

        Let $\theta = \psi \circ \phi$. Then $\|\theta(x)\| \leq \|\phi(x)\| \leq \xi$ for $x \in K$. Also, $\theta|_A = \psi \circ \pi = \sigma$, where $\sigma : A \to A/I$ is the quotient *-homomorphism. Since $b_e \in A$, this implies
        \[ \|\theta(b_e)\| = \|\sigma(b_e)\| = \|\pi(b_e)\| = \|\phi(b_e)\| > \xi. \]
        Furthermore, since $\phi$ is an $A$-bimodule map and $\pi(A)$ belongs to the multiplicative domain of $\psi$, $\theta$ is also an $A$-bimodule map, i.e. $\theta(a_1 c a_2) = \sigma(a_1) \theta(c) \sigma(a_2)$ for $a_1,a_2 \in A$ and $c \in A \rtimes_{\alpha,\lambda} \Gamma$. 
        
        Now, since $b \in K$,
        \[ \|\sigma(b_e) + \sigma(b_g) \theta(\lambda_g) \| = \|\theta(b)\| \leq \xi < \|\theta(b_e)\|, \]
        which implies that $\theta(\lambda_g) \ne 0$. Let $r = \theta(\lambda_g)$. Observe that for $a \in A$,
        \[ r \sigma(a) = \theta(\lambda_g a) = \theta(\alpha_g(a) \lambda_g) = \sigma(\alpha_g(a)) r. \]
        Similarly,
        \[ r^* \sigma(a) = (\sigma(a^*) r)^* = \theta(a^* \lambda_g)^* = \theta(\lambda_g \alpha_{g^{-1}}(a^*))^* = (r \sigma(\alpha_{g^{-1}}(a^*)))^* = \sigma(\alpha_{g^{-1}}(a)) r^*. \]
        Hence
        \[ r^*r \sigma(a) = r^* \sigma(\alpha_g(a)) r = \theta(a) r^*r, \]
        so that $r^*r$ commutes with $A/I$ in $I(A/I)$. Hence by \cite[Corollary~4.3]{hamana79_injective_envelopes_cstaralg}, $r^*r$ belongs to the center $Z(I(A/I))$ of $I(A/I)$. Similarly, $rr^*$ belongs to the center of $I(A/I)$.

        Let $r = u|r|$ be the polar decomposition of $r$. Then $p = u^*u$ is the left support projection for $r$ and $|r^*|$. Similarly, $q = uu^*$ is the right support projection for $r$ and $|r|$. Since $|r|,|r^*| \in Z(I(A/I))$, it follows that $p,q \in Z(I(A/I))$ and hence that $p = q$. Note that this also implies that $p$ is the (left and right) support projection for $u$ and $u^*$. It follows from above that $u$ is a unitary in $I(A/I)p$ satisfying $u \sigma(a) = \sigma(\alpha_g(a)) u$ for $a \in A$.

        We would like to conclude that the induced action of $\alpha_g$ on $A/I$ is not properly outer, which would contradict the assumption that $\alpha_g$ is residually properly outer. However, the ideal $I$ is not necessarily $\alpha_g$-invariant. Instead, define $\rho : A \to I(A/I)p$ by $\rho(a) = \sigma(a)p$. Then $\rho$ is a *-homomorphism satisfying $\rho(\alpha_g(a)) = u \rho(a) u^*$ for all $a \in A$. Hence $J = \ker \rho$ is $\alpha_g$-invariant, and the induced action of $\alpha_g$ on $A/J$ is quasi-inner, as by Proposition~\ref{prop:injective_envelope_of_central_quotient} we have $I(A/J) \isoto I(A/I)p$.
    \end{proof}

    With the previous lemma in hand, we have now overcome the main technical hurdle. Before proceeding further, we make an important remark.

    \begin{remark} \label{rem:composition_convex_combinations}
        It is a slightly subtle point that the absolute $A$-convex hull of a single element $x$ can be characterized as the norm closure of
        \[ \setbuilder{\sum_{i=1}^n a_i x b_i}{a_1,\ldots,a_n,b_1,\ldots,b_n \in A \text{ with } \sum_{i=1}^n a_i a_i^* \leq 1,\ \sum_{i=1}^n b_i^*b_i \leq 1 \text{ for } n \geq 1}, \]
        simply because absolute $A$-convex combination of absolute $A$-convex combinations can always be written as a single absolute $A$-convex combination.
    \end{remark}

    In the next result, we show that the averaging process in Lemma \ref{lem:average_one_element_to_expectation} can be applied to arbitrary elements by iterating. 

    \begin{proposition}
    \label{prop:average_all_to_expectation}
        Let $\alpha : \Gamma \curvearrowright A$ be a pointwise residually properly outer action of a discrete group $\Gamma$ on a unital C*-algebra $A$. Then for every element $b \in A \rtimes_{\alpha,\lambda} \Gamma$, the conditional expectation $E(b)$ belongs to the norm closure of the absolute $A$-convex hull of $b$.
    \end{proposition}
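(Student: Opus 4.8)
The plan is to prove the statement first for finitely supported elements and then pass to arbitrary $b$ by density, handling the finitely supported case by induction on the number of elements of $\Gamma \setminus \{e\}$ in the support, reducing the support by one at each step using Lemma~\ref{lem:average_one_element_to_expectation}. Before anything else I would record the two soft ingredients that make such an iteration possible. Since every absolute $A$-convex combination of contractions is a contraction (Lemma~\ref{lem:absolutely_convex_combination_contractive}), for fixed coefficient sequences $(a_i)$ and $(d_i)$ the operation $T \colon x \mapsto \sum_i a_i x d_i$ is a linear contraction on $A \rtimes_{\alpha,\lambda} \Gamma$; and by Remark~\ref{rem:composition_convex_combinations} the absolute $A$-convex hull is stable under composing such operations, so anything reachable by averaging from an element of the absolute $A$-convex hull of $b$ again lies in the norm closure of the absolute $A$-convex hull of $b$.

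Using the contractivity of $T$ together with the contractivity of $E$, a standard three-$\epsilon$ argument reduces the general case to finitely supported $b$: approximate $b$ in norm by a finitely supported $b'$, average $b'$ close to $E(b')$ by some $T$, and then transport this averaging back to $b$, estimating $\|T(b) - E(b)\|$ by $\|T(b-b')\| + \|T(b') - E(b')\| + \|E(b'-b)\|$. Thus it suffices to treat finitely supported $b$, and I would induct on the size of the non-identity support $S = \{g_1,\dots,g_k\}$, the case $S = \varnothing$ being trivial since then $b = b_e = E(b)$.

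The heart of the argument is the inductive step. Fix $g \in S$ and apply Lemma~\ref{lem:average_one_element_to_expectation} to the two-term element $b_e + b_g \lambda_g$, which is legitimate because $\alpha_g$ is residually properly outer; this yields an averaging operation $T \colon x \mapsto \sum_i a_i x d_i$ for which $\|(\sum_i a_i b_e d_i) - b_e\|$ and $\|\sum_i a_i b_g \alpha_g(d_i)\|$ are both small. The key point is that applying the \emph{same} $T$ to $b$ gives an element whose $e$-coefficient is exactly $\sum_i a_i b_e d_i$ and whose $g$-coefficient is exactly $\sum_i a_i b_g \alpha_g(d_i)$, so $E(T(b))$ is close to $b_e$ while the $g$-coefficient of $T(b)$ is small; moreover $T(b)$ still lies in the absolute $A$-convex hull of $b$ and is supported in $\{e\} \cup S$. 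I would then truncate the small $g$-coefficient to obtain $b''$ with $\|b'' - T(b)\|$ small, $E(b'') = E(T(b))$, and non-identity support $S \setminus \{g\}$ of size $k-1$. The inductive hypothesis places $E(b'')$ in the norm closure of the absolute $A$-convex hull of $b''$; applying contractivity once more to transfer an averaging of $b''$ into the corresponding averaging of $T(b)$, and hence of $b$, lands within a controlled distance of $b_e$. Letting all errors tend to $0$ and using that the absolute $A$-convex hull is norm-closed gives $E(b) = b_e$ in its norm closure.

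The main obstacle, and the point the argument must finesse, is that a single averaging operation acts on every Fourier coefficient simultaneously through the same coefficients, so using Lemma~\ref{lem:average_one_element_to_expectation} to kill the $g$-term unavoidably scrambles the remaining terms $b_h$ with $h \in S \setminus \{g\}$. The resolution is that one never attempts to preserve those terms: it is enough that the operation approximately kills the $g$-term while keeping the $e$-coefficient near $b_e$, after which truncation genuinely lowers the support and the induction proceeds, with all accumulated error controlled by the uniform contractivity of absolute $A$-convex combinations (Lemma~\ref{lem:absolutely_convex_combination_contractive}) and the composition property of Remark~\ref{rem:composition_convex_combinations}.
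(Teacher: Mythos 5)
Your proposal is correct and follows essentially the same route as the paper: reduce to finitely supported elements by density and contractivity, then remove the non-identity support one group element at a time by applying Lemma~\ref{lem:average_one_element_to_expectation} to the two-term element $b_e + b_g\lambda_g$, using Lemma~\ref{lem:absolutely_convex_combination_contractive} and Remark~\ref{rem:composition_convex_combinations} to control the accumulated error. The only (immaterial) difference is presentational: the paper iterates and carries the small remainder terms along, whereas you truncate them and invoke an induction hypothesis, and you make explicit the coefficient-wise (twisted) action of an absolute $A$-convex combination on Fourier coefficients, which the paper uses implicitly.
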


    \begin{proof}
        First suppose that $b \in A \rtimes_{\alpha,\lambda} \Gamma$ is finitely supported, say
        \[ b = b_e + b_{g_1} \lambda_{g_1} + \cdots + b_{g_n} \lambda_{g_n} \]
        for distinct $g_1,\ldots,g_n \in \Gamma$. Fix $\varepsilon > 0$. By Lemma \ref{lem:average_one_element_to_expectation}, there is an absolute $A$-convex combination $\psi_1(\cdot) = \sum_i a_i \cdot b_i$ such that $\|\psi_1(b_e + b_{g_1} \lambda_{g_1}) - b_e\| < \varepsilon/n$. Therefore, we can write
        \[ \psi_1(b) = b_e + b_{g_2}' \lambda_{g_2} + \cdots + b_{g_n}' \lambda_{g_n} + r_1, \]
        for $b_{g_2}',\ldots,b_{g_n}' \in A$ and $\|r_1\| < \varepsilon/n$. 

        Applying Lemma \ref{lem:average_one_element_to_expectation} again, there is an absolute $A$-convex combination $\psi_2$ such that $\|\psi_2(b_e + b_{g_2}' \lambda_{g_2}) - b_e\| < \varepsilon/n$. Therefore, we can write
        \[
            \psi_2 \circ \psi_1(b) = b_e + b_{g_3}'' \lambda_{g_3} + \cdots + b_{g_n}'' \lambda_{g_n} + \psi_2(r_1) + r_2,
        \]
        for $b_{g_3}'',\ldots,b_{g_n}'' \in A$ and $r_2 < \varepsilon/n$. Note that $\|\psi_2(r_1)\| < \varepsilon/n$ by Lemma \ref{lem:absolutely_convex_combination_contractive}, so $\|\psi_2(r_1) + r_2\|< 2 \varepsilon/n$. 

        Iterating this process $n-2$ more times by composing suitable $A$-convex combinations and applying Remark~\ref{rem:composition_convex_combinations} yields a single absolute $A$-convex combination $\psi$ such that $\psi(b) = b_e + r$ for $\|r\| < \varepsilon$. In particular, $\|\psi(b) - b_e\| < \varepsilon$.

        Now for an arbitrary element $b \in A \rtimes_{\alpha,\lambda} \Gamma$ and $\epsilon > 0$, we can choose a finitely supported element $b_0 \in A \rtimes_{\alpha,\lambda} \Gamma$ with $E(b_0) = E(b)$ and $\|b - b_0\| < \varepsilon/2$. From above there is an absolute $A$-convex combination $\psi$ such that $\|\psi(b_0) - E(b)\| < \varepsilon/2$. Hence
        \[ \|\psi(b) - E(b)\| \leq \|\psi(b - b_0)\| + \|\psi(b_0) - E(b)\| < \varepsilon, \]
        where we have applied Lemma \ref{lem:absolutely_convex_combination_contractive}. It follows that $E(b)$ belongs to the norm closure of the absolute $A$-convex hull of $b$.
    \end{proof}

    The following corollary is the desired averaging result. The proof is an application of a standard translation trick to Proposition \ref{prop:average_all_to_expectation},

    \begin{corollary}
        \label{cor:average_all_to_any_coefficient}
        Let $\alpha : \Gamma \curvearrowright A$ be a pointwise residually properly outer action of a discrete group $\Gamma$ on a unital C*-algebra $A$. Then for every element $b \in A \rtimes_{\alpha,\lambda} \Gamma$ with Fourier series $b \sim \sum_{g \in \Gamma} b_g \lambda_g$, the element $b_g \lambda_g$ belongs to the norm closure of the absolute $A$-convex hull of $b$ for all $g \in \Gamma$.
    \end{corollary}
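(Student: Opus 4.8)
The plan is to deduce this from Proposition~\ref{prop:average_all_to_expectation} by a \emph{translation trick} that trades the Fourier coefficient $b_g$ for a conditional expectation. Recall that the $g$-th Fourier coefficient is given by $b_g = E(b\lambda_g^*)$, so that $b_g$ is exactly the conditional expectation of the shifted element $b\lambda_g^* \in A \rtimes_{\alpha,\lambda} \Gamma$. First I would apply Proposition~\ref{prop:average_all_to_expectation} to $b\lambda_g^*$ in place of $b$, which produces a sequence of absolute $A$-convex combinations $\psi_k(\,\cdot\,) = \sum_i a_i^{(k)} \,\cdot\, c_i^{(k)}$ with $\psi_k(b\lambda_g^*) \to b_g$ in norm.

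The heart of the argument is to recognize each $\psi_k(b\lambda_g^*)$ as an absolute $A$-convex combination of $b$ itself, post-multiplied by the fixed unitary $\lambda_g^*$. Using the covariance relation $\lambda_g^* c \lambda_g = \alpha_{g^{-1}}(c)$ for $c \in A$, I would compute
\[
    \psi_k(b\lambda_g^*) = \sum_i a_i^{(k)} b \lambda_g^* c_i^{(k)} = \left( \sum_i a_i^{(k)} b\, \alpha_{g^{-1}}(c_i^{(k)}) \right) \lambda_g^*.
\]
Setting $d_i^{(k)} = \alpha_{g^{-1}}(c_i^{(k)})$ and using that $\alpha_{g^{-1}}$ is a *-automorphism, the right constraint $\sum_i (c_i^{(k)})^* c_i^{(k)} \leq 1$ transforms into $\sum_i (d_i^{(k)})^* d_i^{(k)} = \alpha_{g^{-1}}\!\left(\sum_i (c_i^{(k)})^* c_i^{(k)}\right) \leq 1$, while the left constraint $\sum_i a_i^{(k)} (a_i^{(k)})^* \leq 1$ is untouched. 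Hence $\Phi_k(b) := \sum_i a_i^{(k)} b\, d_i^{(k)}$ is a genuine absolute $A$-convex combination of $b$, and the computation above reads $\psi_k(b\lambda_g^*) = \Phi_k(b)\lambda_g^*$.

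Finally, since right multiplication by the unitary $\lambda_g^*$ is isometric, the convergence $\Phi_k(b)\lambda_g^* = \psi_k(b\lambda_g^*) \to b_g$ yields $\Phi_k(b) \to b_g \lambda_g$ upon multiplying on the right by $\lambda_g$. As each $\Phi_k(b)$ lies in the absolute $A$-convex hull of $b$, the limit $b_g \lambda_g$ lies in its norm closure, which is the desired conclusion. Note that pointwise residual proper outerness is used only indirectly here, being invoked inside Proposition~\ref{prop:average_all_to_expectation}; the transfer between the two convex hulls rests solely on $\alpha_{g^{-1}}$ being a *-automorphism. I expect the only (minor) obstacle to be the bookkeeping in confirming that the shifted coefficients $d_i^{(k)}$ still satisfy the defining inequalities of an absolute $A$-convex combination, which is exactly the step where the *-automorphism property of $\alpha_{g^{-1}}$ is essential.
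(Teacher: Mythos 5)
Your argument is correct and is essentially identical to the paper's proof: both apply Proposition~\ref{prop:average_all_to_expectation} to $b\lambda_g^*$ and then use the covariance relation $\lambda_g^* c = \alpha_{g^{-1}}(c)\lambda_g^*$ to rewrite the resulting combination as an absolute $A$-convex combination of $b$ times $\lambda_g^*$, finishing by right-multiplying by the unitary $\lambda_g$. The bookkeeping you flag (that $\sum_i d_i^* d_i = \alpha_{g^{-1}}(\sum_i c_i^* c_i) \leq 1$) goes through exactly as you describe.
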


    \begin{proof}
        Fix $g \in \Gamma$ and consider the element $b \lambda_g^*$, which satisfies $E(b \lambda_g^*) = b_g$. For $\varepsilon > 0$, applying Proposition \ref{prop:average_all_to_expectation} yields an absolute $A$-convex combination $\psi$ such that $\|\psi(b \lambda_g^*) - b_g\| < \epsilon$. Expand $\psi(\cdot) = \sum_i a_i^* \cdot b_i$ and define a new absolute $A$-convex combination $\psi_g(\cdot) = \sum_i a_i^* \cdot \alpha_{g^{-1}}(b_i)$. Then $\psi_g(b) = \psi(b \lambda_g^*) \lambda_g$, so
        \[ \|\psi_g(b) - b_g \lambda_g\| = \|\psi(b \lambda_g^*) \lambda_g - b_g \lambda_g\| \leq \|\psi(b \lambda_g^*) - b_g\| < \epsilon. \]
        It follows that $b_g \lambda_g$ belongs to the norm closure of the absolute $A$-convex hull of $b$.
    \end{proof}

    We are now ready to prove Theorem~\ref{thmintro:main}.

    \begin{proof}[Proof of Theorem~\ref{thmintro:main}]
        Assuming that $\Gamma$ has the approximation property, we want to show that if either $|\Gamma| \leq 2$ or the action $\alpha : \Gamma \curvearrowright A$ is pointwise residually properly outer, then every intermediate C*-algebra for the inclusion $A \subseteq A \rtimes_{\alpha,\lambda} \Gamma$ is the reduced crossed product of a partial subaction of $\alpha$. Fix an intermediate C*-algebra $B$ for this inclusion. Note that $B$ is an $A$-bimodule, and in particular is closed under absolute $A$-convex combinations.

        We first claim that for any element $b \in B$ with Fourier series $b \sim \sum_{g \in \Gamma} b_g \lambda_g$, we have $b_g \lambda_g \in B$ for all $g \in \Gamma$. The claim is trivial if $\Gamma$ is trivial. If $|\Gamma| = 2$, then the claim follows immediately from the fact that $A \subseteq B$. Otherwise, if $|\Gamma| > 2$, then $\alpha$ is pointwise residually properly outer by assumption, so the claim is implied by Corollary \ref{cor:average_all_to_any_coefficient}.

        We now construct the partial subaction $\beta : \Gamma \curvearrowright A$. For $g \in \Gamma$, let $I_g = \{a \in A : a \lambda_g \in B\}$. It is clear that $I_e = A$, and since $B$ is an $A$-bimodule, it follows easily from the first claim that each $I_g$ is a (closed) ideal of $A$.
        
        We claim that the restriction $\alpha_g|_{I_{g^{-1}}}$ is a *-isomorphism from $I_{g^{-1}}$ to $I_g$. To see this, first note that for $a \in I_{g^{-1}}$, the self-adjointness of ideals implies $a^* \in I_{g^{-1}}$, so $a^* \lambda_{g^{-1}} \in B$. Taking the adjoint gives $\lambda_g a = \alpha_g(a) \lambda_g \in B$, so $\alpha_g(a) \in I_g$. Hence $\alpha_g(I_{g^{-1}}) \subseteq I_g$. Similarly, $\alpha_{g^{-1}}(I_g) \subseteq I_{g^{-1}}$, and applying $\alpha_g$ to each side gives $I_g \subseteq \alpha_g(I_{g^{-1}})$. Therefore, $I_g = \alpha_g(I_{g^{-1}})$, and we see that $\alpha_g|_{I_{g^{-1}}}$ is a *-isomorphism from $I_{g^{-1}}$ to $I_g$ with inverse $\alpha_{g^{-1}}|_{I_g}$, proving the claim. 
        
        For $g \in \Gamma$, let $\beta_g = \alpha_g|_{I_{g^{-1}}}$. In order to conclude that $\beta$ is a partial subaction, we must verify that $\beta_h^{-1}(I_h \cap I_{g^{-1}}) \subseteq I_{(gh)^{-1}}$ for all $g,h \in \Gamma$. For this, we first note that it follows from basic C*-algebra theory that $I_h \cap I_{g^{-1}} = I_h I_{g^{-1}}$. Hence for $a \in I_h$ and $b \in I_{g^{-1}}$, $ab \in I_h$, so from above $\beta_{h^{-1}}(ab) \in I_{h^{-1}}$. Hence $\beta_{h^{-1}}(ab) \lambda_{h^{-1}} \in B$. We want to show that $\beta_{h^{-1}}(ab) \lambda_{(gh)^{-1}} = \beta_{h^{-1}}(ab) \lambda_{h^{-1}} \lambda_{g^{-1}} \in B$. For this, let $(e_i)$ be an approximate unit for $I_h \cap I_{g^{-1}}$. Then $e_i \lambda_{g^{-1}} \in B$ for each $i$, so $\beta_{h^{-1}}(ab) \lambda_{h^{-1}} e_i \lambda_{g^{-1}} \in B$. Now
        \[
            \beta_{h^{-1}}(ab) \lambda_{h^{-1}} e_i \lambda_{g^{-1}} = \beta_{h^{-1}}(ab e_i) \lambda_{h^{-1}} \lambda_{g^{-1}} \in B,
        \]
        and taking the limit implies $\beta_{h^{-1}}(ab) \lambda_{h^{-1}} \lambda_{g^{-1}} \in B$ as required. Hence $\beta$ is a partial subaction of $\alpha$.

        From above, the closed span of $\{a_g \lambda_g : a_g \in I_g,\ g \in \Gamma\}$ is contained in $B$. The reverse inclusion follows from the first claim and Proposition \ref{prop:ap_crossed_products}.
    \end{proof}

    In the special case when $A$ is simple, the action $\alpha : A \curvearrowright \Gamma$ is pointwise residually properly outer if and only if it is properly outer. Moreover, it is known that for simple C*-algebras, proper outerness is equivalent to outerness (see e.g. \cite[Corollary~7.8]{hamana85-injective_envelopes_equivariant}). Therefore, the main result in \cite{cameron_smith_cstar_simple} is equivalent to the assertion that if $A$ is simple, then Theorem~\ref{thmintro:main} holds even if $\Gamma$ does not have the approximation property. For completeness, we now indicate how this result can be proved by a simple modification of our arguments.

    \begin{theorem}
         Let $\alpha : \Gamma \curvearrowright A$ be an action of a discrete group $\Gamma$ on a simple unital C*-algebra $A$ by *-automorphisms. If either $\card{\Gamma} \leq 2$ or $\alpha$ is pointwise outer, then every intermediate C*-algebra for the inclusion $A \subseteq A \rtimes_{\alpha,\lambda} \Gamma$ is the reduced crossed product $A \rtimes_{\alpha|_\Lambda,\lambda} \Lambda$ of a restricted action $\alpha|_\Lambda$ for a subgroup $\Lambda \leq \Gamma$.
    \end{theorem}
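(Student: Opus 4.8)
The plan is to notice that the averaging machinery developed above---in particular Lemma~\ref{lem:average_one_element_to_expectation}, Proposition~\ref{prop:average_all_to_expectation}, and Corollary~\ref{cor:average_all_to_any_coefficient}---never invokes the approximation property; it uses only pointwise residual proper outerness. Consequently this part of the argument remains available here, and the only role played by the approximation property in the proof of Theorem~\ref{thmintro:main} (supplying the reverse inclusion via Proposition~\ref{prop:ap_crossed_products}) is what must be replaced, and I would replace it using the simplicity of $A$.

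First I would reduce the hypotheses to a form in which the earlier results apply. Since $A$ is simple, its only $\alpha_g$-invariant ideals are $\{0\}$ and $A$, so residual proper outerness of $\alpha_g$ asserts precisely that $\alpha_g$ is properly outer on $A$; and by \cite[Corollary~7.8]{hamana85-injective_envelopes_equivariant}, proper outerness coincides with outerness for simple C*-algebras. Hence ``$\alpha$ pointwise outer'' is exactly ``$\alpha$ pointwise residually properly outer'' in this setting, and Corollary~\ref{cor:average_all_to_any_coefficient} applies. Fixing an intermediate C*-algebra $B$, the corollary together with the fact that $B$ is a norm-closed $A$-bimodule yields, exactly as in the proof of Theorem~\ref{thmintro:main}, that $b_g \lambda_g \in B$ for every $b \in B$ and every $g \in \Gamma$. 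Thus each Fourier coefficient $b_g$ lies in the ideal $I_g = \{a \in A : a\lambda_g \in B\}$, and here simplicity enters decisively: each $I_g$ is either $\{0\}$ or $A$. Setting $\Lambda = \{g \in \Gamma : \lambda_g \in B\} = \{g : I_g = A\}$, the relations $\lambda_e = 1 \in B$, $\lambda_g^* = \lambda_{g^{-1}}$, and $\lambda_g \lambda_h = \lambda_{gh}$, together with $B$ being a $*$-subalgebra, show that $\Lambda$ is a subgroup of $\Gamma$. In other words, the partial subaction produced by the argument of Theorem~\ref{thmintro:main} degenerates to the genuine restriction $\alpha|_\Lambda$.

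Finally I would identify $B$ with $A \rtimes_{\alpha|_\Lambda,\lambda} \Lambda$. The inclusion $A \rtimes_{\alpha|_\Lambda,\lambda} \Lambda \subseteq B$ is immediate, since $a\lambda_g \in B$ for all $a \in A$ and $g \in \Lambda$ and $B$ is norm-closed. For the reverse inclusion---which is where the approximation property was previously used, and which I expect to be the only genuine obstacle---I would invoke the standard fact that for an arbitrary subgroup $\Lambda \leq \Gamma$ there is a conditional expectation $E_\Lambda : A \rtimes_{\alpha,\lambda} \Gamma \to A \rtimes_{\alpha|_\Lambda,\lambda} \Lambda$, characterized on Fourier coefficients by retaining $b_g$ for $g \in \Lambda$ and annihilating $b_g$ for $g \notin \Lambda$ (see e.g.\ \cite{brown_ozawa}); crucially, this expectation exists with no hypothesis whatsoever on $\Gamma$. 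Given $b \in B$, the preceding paragraph shows $b_g = 0$ for all $g \notin \Lambda$, so $E_\Lambda(b)$ and $b$ have identical Fourier coefficients and therefore coincide by the uniqueness of Fourier expansions in $A \rtimes_{\alpha,\lambda} \Gamma$. Hence $b = E_\Lambda(b) \in A \rtimes_{\alpha|_\Lambda,\lambda} \Lambda$, establishing equality. The case $\card{\Gamma} \leq 2$ is handled exactly as in Theorem~\ref{thmintro:main}: the sole nontrivial Fourier coefficient lies automatically in $B$ because $A \subseteq B$, so $\Lambda$ is either trivial or all of $\Gamma$.
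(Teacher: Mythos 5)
Your proposal is correct and follows essentially the same route as the paper's proof: reduce pointwise outerness to pointwise residual proper outerness via simplicity and \cite[Corollary~7.8]{hamana85-injective_envelopes_equivariant}, run the averaging argument (which indeed never uses the approximation property) to get $b_g\lambda_g \in B$, observe that simplicity forces each $I_g$ to be $0$ or $A$ so the partial subaction collapses to a subgroup $\Lambda$, and replace Proposition~\ref{prop:ap_crossed_products} by the conditional expectation $E_\Lambda$ to obtain $B \subseteq A \rtimes_{\alpha|_\Lambda,\lambda}\Lambda$. The only cosmetic difference is that you justify $b = E_\Lambda(b)$ directly from $b_g = 0$ for $g \notin \Lambda$, while the paper routes this through the multiplicative-domain property of $E_\Lambda$; both are valid.
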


    \begin{proof}
        Let $B$ be an intermediate C*-algebra. As mentioned above, outerness is equivalent to proper outerness for simple C*-algebras, so the action $\alpha$ is pointwise residually properly outer. Arguing as in the proof of Theorem~\ref{thmintro:main}, given any element $b \in B$ with Fourier series $b \sim \sum_{g \in \Gamma} b_g \lambda_g$, we have $b_g \lambda_g \in B$ for all $g \in \Gamma$. We also obtain an ideal $I_g = \{a \in A : a \lambda_g \in B \}$ of $A$ for all $g \in \Gamma$, and by the simplicity of $A$, each $I_g$ is either $0$ or $A$. 

        Let $\Lambda = \{g \in \Gamma : I_g = A \}$. It is easy to see that $\Lambda$ is a subgroup of $\Gamma$, since $g \in \Lambda$ if and only if $\lambda_g \in B$. Therefore, $A \rtimes_{\alpha|_\Lambda,\lambda} \Lambda \subseteq B$. 

        For the reverse inclusion, let $E_{\Lambda} : A \rtimes_{\alpha,\lambda} \Gamma \to A \rtimes_{\alpha|_\Lambda,\lambda} \Lambda$ denote the conditional expectation satisfying
        \[ E_{\Lambda}(a \lambda_g) = \begin{cases} a \lambda_g & g \in \Lambda \\ 0 & \text{otherwise} \end{cases}, \quad \text{for} \quad a \in A,\ g \in \Gamma. \]
        Since $E_{\Lambda}$ is the identity on $\ca_\lambda(\Lambda)$, $\ca_\lambda(\Lambda)$ belongs to the multiplicative domain of $E_{\Lambda}$. Hence for $b \in A \rtimes_{\alpha,\lambda} \Gamma$ and $g \in \Lambda$, $E(E_{\Lambda}(b)\lambda_g^*) = E(E_{\Lambda}(b \lambda_g^*)) = E(b \lambda_g^*)$. In other words, $E_{\Lambda}$ preserves the Fourier coefficients corresponding to elements of $\Lambda$. Hence for $b \in B$, $E_{\Lambda}(b)$ has the same Fourier coefficients as $b$, implying $b = E_{\Lambda}(b) \in A \rtimes_{\alpha|_\Lambda,\lambda} \Lambda$. Therefore, $B \subseteq A \rtimes_{\alpha|_\Lambda,\lambda} \Lambda$.
    \end{proof}
    
    \section{Necessary conditions for a Galois correspondence}
    \label{sec:counterexample_constructions}

    \subsection{The noncommutative case}
    \label{sec:counterexample_constructions:noncommutative}

    In this section we will prove Theorem~\ref{thmintro:converse}, establishing as much as possible the converse to Theorem~\ref{thmintro:main}. Specifically, let $\alpha : \Gamma \curvearrowright A$ be the action of a discrete group on a unital C*-algebra $A$ by *-automorphisms. If $\alpha$ is not pointwise residually properly outer, then our goal is to explicitly construct an intermediate C*-algebra $A \subseteq B \subseteq A \rtimes_{\alpha,\lambda} \Gamma$ such that $B$ is not the reduced crossed product of a partial subaction of $\alpha$. Note that we will not require the approximation property in this section.

    The following definition will be quite useful for us. The terminology is inspired by the notion of a ``rigid stabilizer'' in topological dynamics.

    \begin{definition}
    \label{def:nc_rigid_stabilizer}
        Let $\alpha : \Gamma \curvearrowright A$ be an action of a discrete group $\Gamma$ on a C*-algebra $A$ by *-automorphisms. For an ideal $K \triangleleft A$, the \emph{rigid stabilizer} of $K$ is the subgroup $\Gamma_K \leq K$ consisting of elements $g \in \Gamma$ such that $K$ is $\alpha_g$-invariant and the *-automorphism induced by $\alpha_g$ on the quotient $A/K$ is quasi-inner.
    \end{definition}

    \begin{remark}
    \label{rem:rigid_stabilizer}
    Proposition~\ref{prop:not_residually_outer_iff_quasi_inner} implies that if the action $\alpha : \Gamma \curvearrowright A$ is not pointwise residually properly outer, then there is a proper ideal $K \triangleleft A$ such that the corresponding rigid stabilizer $\Gamma_K$ is non-trivial.
    \end{remark}

    The following result is straightforward, but it provides two important sources of intermediate C*-algebras that are not reduced crossed products of partial subactions. 

    \begin{lemma}
    \label{lem:intermediate_subalgebra_reductions}
        Let $\alpha : \Gamma \curvearrowright A$ be an action of a discrete group on a unital C*-algebra $A$ by *-automorphisms.
        \begin{enumerate}
            \item Let $\Lambda \leq \Gamma$ be a subgroup such that some intermediate C*-algebra $B$ for the inclusion $A \subseteq B \subseteq A \rtimes_{\alpha|_\Lambda, \lambda} \Lambda$ is not the reduced crossed product of a partial subaction of $\alpha|_{\Lambda}$. Then $B$ is also an intermediate C*-algebra for the inclusion $A \subseteq A \rtimes_{\alpha,\lambda} \Gamma$ that is not the reduced crossed product of a partial subaction of $\alpha$. 

            \item Let $J \triangleleft A$ be a $\Gamma$-invariant ideal such that some intermediate C*-algebra $B$ for the inclusion $A/J \subseteq A/J \rtimes_{\alpha_{A/J},\lambda} \Gamma$ is not the reduced crossed product of a partial subaction of $\alpha|_{A/J}$, where $\alpha|_{A/J}$ denotes the action induced by $\alpha$ on $A/J$. Then letting $\pi : A \rtimes_{\alpha,\lambda} \Gamma \to (A/J) \rtimes_{\alpha_{A/J},\lambda} \Gamma$ be the quotient *-homomorphism, the pullback $\pi^{-1}(B)$ is an intermediate C*-algebra for the inclusion $A \subseteq A \rtimes_{\alpha,\lambda} \Gamma$ that is not the reduced crossed product of a partial subaction of $\alpha$.
        \end{enumerate}
    \end{lemma}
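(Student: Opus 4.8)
The plan is to prove both parts by contradiction: in each case I would assume that the candidate intermediate algebra \emph{is} the reduced crossed product of a partial subaction of $\alpha$, and then manufacture from it a partial subaction in the ``smaller'' setting (over $\Lambda$ in part (1), over $A/J$ in part (2)), contradicting the hypothesis.

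For part (1), I would first note that the inclusion $A \rtimes_{\alpha|_\Lambda,\lambda}\Lambda \subseteq A \rtimes_{\alpha,\lambda}\Gamma$ makes $B$ automatically an intermediate C*-algebra for $A \subseteq A \rtimes_{\alpha,\lambda}\Gamma$. Suppose toward a contradiction that $B = \overline{\operatorname{span}}\{a_g\lambda_g : a_g \in I_g,\ g \in \Gamma\}$ for a partial subaction $\beta$ of $\alpha$ with ideals $\{I_g\}_{g\in\Gamma}$. The key observation is that every element of $A \rtimes_{\alpha|_\Lambda,\lambda}\Lambda$ is supported on $\Lambda$: for $g \notin \Lambda$ and $a_g \in I_g$, the element $a_g\lambda_g \in B \subseteq A\rtimes_{\alpha|_\Lambda,\lambda}\Lambda$ has $g$-th Fourier coefficient $E(a_g\lambda_g\lambda_g^*) = a_g$, which must vanish, forcing $I_g = 0$. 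Consequently the restriction of $\beta$ to $\Lambda$ is a partial subaction of $\alpha|_\Lambda$ (the partial-action axioms for indices lying in the subgroup $\Lambda$ are inherited verbatim), and $B$ is exactly its reduced crossed product, the desired contradiction.

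For part (2), I would first check that $\pi^{-1}(B)$ is an intermediate C*-algebra: it is a C*-algebra as the preimage of one, it contains $A$ because $\pi(A) = A/J \subseteq B$, and it sits inside $A\rtimes_{\alpha,\lambda}\Gamma$ by construction. Since $0 \in B$ we have $\ker\pi \subseteq \pi^{-1}(B)$, so surjectivity of $\pi$ gives $\pi(\pi^{-1}(B)) = B$. Now suppose toward a contradiction that $\pi^{-1}(B) = \overline{\operatorname{span}}\{a_g\lambda_g : a_g \in I_g,\ g \in \Gamma\}$ for a partial subaction $\beta$ of $\alpha$ with ideals $\{I_g\}$. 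Writing $q : A \to A/J$ for the quotient map and using that $\pi$ sends $a_g\lambda_g$ to $q(a_g)\lambda_g$, I would show $B = \pi(\pi^{-1}(B))$ equals the closed span of $\{c_g\lambda_g : c_g \in q(I_g),\ g \in \Gamma\}$: the inclusion ``$\subseteq$'' follows from continuity and density of $\operatorname{span}\{a_g\lambda_g\}$ in $\pi^{-1}(B)$, and ``$\supseteq$'' from lifting each $c_g \in q(I_g)$ to some $a_g \in I_g$, so that $c_g\lambda_g = \pi(a_g\lambda_g) \in B$.

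The main obstacle, and the crux of part (2), is verifying that the pushed-down data $\{q(I_g)\}$ together with the induced partial isomorphisms $\bar\beta_g = (\alpha_{A/J})_g|_{q(I_{g^{-1}})}$ forms a genuine partial subaction of $\alpha_{A/J}$. The delicate point is the composition axiom $\bar\beta_h^{-1}(q(I_h)\cap q(I_{g^{-1}})) \subseteq q(I_{(gh)^{-1}})$, which would fail if $q$ did not preserve intersections of ideals. Here I would invoke the C*-algebraic fact (already used in the proof of Theorem~\ref{thmintro:main}) that the intersection of two ideals equals their product; since a *-homomorphism preserves products of ideals, this yields $q(I_h \cap I_{g^{-1}}) = q(I_h) \cap q(I_{g^{-1}})$. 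Combined with the $\Gamma$-invariance of $J$ (so that $\alpha_{A/J}$ is well defined and $q$ intertwines $\alpha$ with $\alpha_{A/J}$), the remaining axioms transfer by applying $q$ to the corresponding identities for $\beta$. This exhibits $B$ as the reduced crossed product of a partial subaction of $\alpha_{A/J}$, contradicting the hypothesis and completing the proof.
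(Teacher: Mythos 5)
Your proposal is correct and follows essentially the same route as the paper: the paper dismisses part (1) as immediate and proves part (2) by exactly your observation that $\pi$ carries the reduced crossed product of a partial subaction of $\alpha$ onto the reduced crossed product of the induced partial subaction on $A/J$, yielding the contradiction. You have simply written out the details (vanishing of $I_g$ for $g \notin \Lambda$, $\pi(\pi^{-1}(B)) = B$, and $q(I_h \cap I_{g^{-1}}) = q(I_h) \cap q(I_{g^{-1}})$ via $I \cap J = IJ$) that the paper leaves implicit.
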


    \begin{proof}
        The first statement is immediately clear. For the second statement, it suffices to observe that if $\beta : \Gamma \curvearrowright A$ is a partial subaction of $\alpha$, then the image of the reduced crossed product $A \rtimes_{\beta,\lambda} \Gamma$ under $\pi$ is the reduced crossed product $A/J \rtimes_{\beta_{A/J}, \lambda} \Gamma$ of the partial action $\beta|_{A/J}$ induced by $\beta$ on the quotient $A/J$.
    \end{proof}
    
    We are now ready to commence with the proof of Theorem \ref{thmintro:converse}. Fix an action $\alpha : \Gamma \curvearrowright A$ of a discrete group $\Gamma$ on a unital C*-algebra $A$ by *-automorphisms such that $\alpha$ is not pointwise residually properly outer. By Remark \ref{rem:rigid_stabilizer}, there is a proper ideal $K \triangleleft A$ such that the corresponding rigid stabilizer $\Gamma_K$ satisfies $\card{\Gamma_K} \geq 3$. We will divide the proof of Theorem \ref{thmintro:converse} into two cases, depending on the order of the elements in $\Gamma_K$ as follows: 
    \begin{enumerate}
        \item The first case is when $\Gamma_K$ contains an element of order at least $3$, with infinite order allowed. This is the easiest case and is done in Proposition~\ref{prop:counterexample_order_three}.

        \item The second, more difficult, case is when every non-trivial element in $\Gamma_K$ is of order $2$. This case is more difficult, and is done in Proposition~\ref{prop:counterexample_klein_four}.
    \end{enumerate}
    
    We begin with the easiest case when $\Gamma_K$ contains an element of order at least $3$.

    \begin{proposition}
    \label{prop:counterexample_order_three}
        If $\Gamma_K$ contains an element of order at least $3$, then there is an intermediate C*-algebra for the inclusion $A \subseteq A \rtimes_{\alpha,\lambda} \Gamma$ that is not the reduced crossed product of a partial subaction of $\alpha$.
    \end{proposition}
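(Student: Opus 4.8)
The plan is to reduce, via Lemma~\ref{lem:intermediate_subalgebra_reductions}, to a cyclic group acting on a quotient where $\alpha_g$ is quasi-inner, and then to manufacture the counterexample inside the crossed product of the injective envelope, where the action becomes inner. Fix $g \in \Gamma_K$ of order $n \geq 3$ (with $n = \infty$ allowed) and put $\Lambda = \langle g \rangle$. Since $g \in \Gamma_K$, the ideal $K$ is $\alpha_{g^k}$-invariant for all $k$, hence $\Lambda$-invariant, so Lemma~\ref{lem:intermediate_subalgebra_reductions}(1) lets me replace $\Gamma$ by $\Lambda$, and Lemma~\ref{lem:intermediate_subalgebra_reductions}(2) lets me pass to $A/K$. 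After relabelling I may therefore assume $\Gamma = \Lambda = \langle g \rangle$ is cyclic of order $n \geq 3$ and that $\alpha_g$ is quasi-inner on $A$.

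Next I would pass to the injective envelope. The equivariant inclusion $A \subseteq I(A)$ induces a Fourier-coefficient-preserving inclusion $A \rtimes_{\alpha,\lambda} \Lambda \subseteq I(A) \rtimes_{\tilde\alpha,\lambda} \Lambda$. By quasi-innerness there is a unitary $u \in I(A)$ with $\tilde\alpha_g = \Ad(u)$; when $n < \infty$ the element $u^n$ is central, and since $Z(I(A))$ is a commutative monotone complete C*-algebra its unitaries admit $n$-th roots, so I may adjust $u$ by a central unitary to arrange $u^n = 1$. The element $v = u^* \lambda_g$ is then a unitary commuting with all of $I(A)$, with $v^k = u^{*k}\lambda_{g^k}$, so that $\mathrm{C}^*(I(A),v) \cong I(A) \otimes \ca_\lambda(\Lambda)$ and $\lambda_g = uv$. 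Because $|\Lambda| \geq 3$, this commuting copy $\ca_\lambda(\Lambda)$ has a proper intermediate subalgebra, and I would exploit the associated symmetry: let $\gamma$ be the $*$-automorphism of $I(A) \rtimes_{\tilde\alpha,\lambda}\Lambda$ fixing $I(A)$ pointwise and flipping $v \mapsto v^*$ (well defined since $\Lambda$ is abelian), which on generators reads $\gamma(\lambda_g) = u^2 \lambda_{g^{-1}}$.

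I would then define
\[
    B = \{\, b \in A \rtimes_{\alpha,\lambda} \Lambda : \gamma(b) = b \,\}.
\]
Although $\gamma$ need not preserve $A \rtimes_{\alpha,\lambda} \Lambda$, its fixed points inside it are closed under the algebraic operations, so $B$ is a C*-algebra with $A \subseteq B \subseteq A \rtimes_{\alpha,\lambda} \Lambda$. Since $\gamma(\lambda_g) = u^2\lambda_{g^{-1}}$ is supported on $g^{-1} \neq g$ (here the order $\geq 3$ enters, so that $g \ne g^{-1}$), one checks that the canonical ideal $I_g = \{a \in A : a\lambda_g \in B\}$ is zero. Thus if $B$ has a single element $b$ with nonzero $g$-th Fourier coefficient $b_g$, then $b_g\lambda_g \notin B$, so $B$ differs from $\overline{\bigoplus}_k I_{g^k}\lambda_{g^k}$ and hence is not the reduced crossed product of a partial subaction. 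A flip-fixed witness is produced from any nonzero $a \in A$ with $au^2 \in A$, namely $b = a\lambda_g + au^2\lambda_{g^{-1}}$, which one verifies is $\gamma$-invariant and has $b_g = a \neq 0$. So everything comes down to the single nondegeneracy statement that $\{\, a \in A : au^2 \in A \,\} \neq 0$.

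This nondegeneracy is the main obstacle, and is precisely the place where the order-$2$ analysis would break down. In the commutative (or genuinely inner) situation $u$ can be taken in $A$ and the condition is automatic; for an authentically quasi-inner $\alpha_g$ the ``tilted'' commuting copy $\mathrm{C}^*(A,v)$ meets $A \rtimes_{\alpha,\lambda}\Lambda$ only through the essentiality of $A \subseteq I(A)$, and this is the technical heart of the argument. The strategy I would pursue is a contradiction argument via rigidity: were the intersection trivial, the powers of $u$ would render $\mathrm{C}^*(A,u) \subseteq I(A)$ a topologically $\Lambda$-graded algebra with unit fibre $A$, and the grading conditional expectation onto $A$ would extend, by injectivity of $I(A)$, to a unital completely positive map $I(A) \to I(A)$ that restricts to $\id_A$ but annihilates $u$ --- contradicting the rigidity of $A \subseteq I(A)$, which forces any such extension to be $\id_{I(A)}$. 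Making this precise (extracting a genuine flip-fixed element from the failure of gradedness, and treating the powers $u^k$ and the case $n = \infty$ uniformly) is where I expect the real work to lie.
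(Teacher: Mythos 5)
Your reduction to a cyclic group acting quasi-innerly on a quotient, and your use of the commuting unitary $v = u^*\lambda_g$ to split off a copy of $\ca_\lambda(\Lambda)$, both match the paper. The genuine gap is the nondegeneracy claim you yourself flag: you need a nonzero $a \in A$ with $au^2 \in A$ (equivalently $Au^2 \cap A \neq 0$) to produce a flip-fixed witness, and without it your fixed-point algebra $B$ can collapse to $A$ (or to $A + A\lambda_{n/2}$ when $n$ is even), which \emph{is} the reduced crossed product of a partial subaction --- so your construction is not unconditionally a counterexample. Quasi-innerness only gives $u \in I(A)$; it says nothing about $Au^k$ meeting $A$. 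The order-$2$ case is deceptive here: there a nonzero element $a + b\lambda_1$ of $J = \ker\pi \cap (A \rtimes_{\alpha,\lambda}\Gamma)$ immediately yields $bu = -a \in A$, but for order $\geq 3$ the relation $\sum_k a_k u^k = 0$ pins down no individual $a_k u^k$ inside $A$. Your proposed repair via rigidity does not close this: the hypothesis $Au^2 \cap A = 0$ does not make $\rC^*(A,u)$ topologically graded, and the existence of a \emph{positive, contractive} conditional expectation onto the degree-zero fibre is essentially equivalent to the topological grading you are trying to establish --- algebraic independence of the fibres (which you also have not derived) would not suffice, so there is no UCP map to feed into the rigidity of $A \subseteq I(A)$.

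The paper sidesteps exactly this obstruction with a dichotomy you are missing. It forms $\pi : I(A) \rtimes_{\alpha,\lambda}\Gamma \to I(A)$ from the covariant pair $(\id_{I(A)},u)$, sets $J = \ker\pi \cap (A\rtimes_{\alpha,\lambda}\Gamma)$ (nonzero by essentiality of $A\rtimes_{\alpha,\lambda}\Gamma \subseteq I(A)\rtimes_{\alpha,\lambda}\Gamma$), and considers $B = A + J$. Either $B$ is already not a partial crossed product --- done --- or it is one, and then its ideal $I_1 = \{a : a\lambda_1 \in B\}$ is nonzero and satisfies $I_1 u \subseteq A$, which is precisely the nondegeneracy you need, now obtained \emph{conditionally} rather than in general. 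Only at that point, with $\alpha_1$ honestly inner on a nonzero ideal $I \triangleleft A$ via a unitary in $M(I)$, does the paper run the tensor-splitting argument (choosing an ideal of $I \otimes \ca_\lambda(\Gamma)$ forcing many nonzero Fourier coefficients; your flip-fixed-point algebra would serve equally well at this stage). To repair your proof, you should adopt this two-case structure rather than attempt to prove $Au^2 \cap A \neq 0$ outright, since the latter can fail.
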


    \begin{proof}
        Let $t \in \Gamma_K$ be an element of order at least $3$. In light of Lemma~\ref{lem:intermediate_subalgebra_reductions}, we may assume without loss of generality that $\Gamma = \langle t \rangle$ and, by passing to $A/K$ and applying Lemma \ref{lem:intermediate_subalgebra_reductions}, that the action of $\Gamma$ on $A$ itself is quasi-inner. Recall that this means the unique extension of $\alpha_t$ to a *-automorphism of $I(A)$, which we continue to denote by $\alpha_t$ for convenience, is inner. This means there  is a unitary $u \in I(A)$ that implements $\alpha_t$, i.e. such that $\alpha_t = \Ad_u$.

        If $t$ is of infinite order, so that $\Gamma \isoto \Z$, then the pair $(\id_{I(A)},u)$ is a covariant representation for $\alpha$. Otherwise, if $t$ has finite order $n$, so that $\Gamma \isoto \Z/n\Z$, then the fact that $\alpha_t^n = \alpha_{t^n} = \alpha_e = \id_{I(A)}$ implies $u^n = z$ for a central unitary $z \in Z(I(A))$. Since $Z(I(A))$ is monotone complete, we can choose an $n$-th root, say $w \in Z(I(A))$ for $u$. Replacing $u$ by $uw^*$, we can assume that $u^n = 1$, so that the pair $(\id_{I(A)},u)$ is a covariant representation for $\alpha$.

        Since $\Gamma$ is amenable, the covariant representation   $(\id_{I(A)},u)$ corresponds to a *-homomorphism $\pi : I(A) \rtimes_{\alpha,\lambda} \Gamma \to I(A)$. Let $K = \ker \pi$. Clearly $K$ is non-zero. Also, the inclusion $A \rtimes_{\alpha,\lambda} \Gamma \subseteq I(A) \rtimes_{\alpha,\lambda} \Gamma$ is essential by \cite[Theorem 3.4]{hamana85-injective_envelopes_equivariant}. Therefore, $J = K \cap A \rtimes_{\alpha,\lambda} \Gamma$ is a non-zero ideal in $A \rtimes_{\alpha,\lambda} \Gamma$.

        The C*-algebra $B = A + J$ is an intermediate C*-algebra for the inclusion $A \subseteq A \rtimes_{\alpha,\lambda} \Gamma$. If $B$ is not the reduced crossed product of a partial subaction of $\alpha$, then we are done. So we may assume that $B$ is the reduced crossed product of a partial subaction $\beta : \Gamma \curvearrowright A$ of $\Gamma$ on $A$, say $B = A \rtimes_{\beta,\lambda} \Gamma$.

        It will be convenient to write elements in $\Gamma$ as integers. Let $\{I_k\}_{k \in \Gamma}$ denote the family of ideals of $A$ corresponding to $\beta$. Since $J \ne 0$, $I_k \ne 0$ for some $k$, and since $J$ is an ideal of $A \rtimes_{\alpha,\lambda} \Gamma$, meaning in particular that is closed under left and right multiplication by $\lambda_1$ and $\lambda_{-1}$, it follows that $I_1 \ne 0$. Note that since $\Gamma$ is abelian, $I_1$ is $\Gamma$-invariant. 

        The fact that $I_1 \lambda_1 \subseteq B = A + J$ implies that for $a \in I_1$, $a\lambda_1 = b + c$ for $b \in A$ and $c \in J$. Applying $\pi$ to this equality gives $a u = b$, so in particular $au \in A$. For $a \geq 0$, this gives $a u = a^{1/2} (a^{1/2}u) \subseteq I_1$. Hence $I_1 u \subseteq I_1$. Also, since $I_1$ is $\Gamma$-invariant, $I_1 \lambda_1 = \lambda_1 I_1$, so arguing similarly gives $u I_1 \subseteq I_1$.

        Let $p \in I(A)$ denote the central support projection for $I_1$, which is obtained as the supremum of an approximate unit for $I_1$. Then $v = up \in M(I_1)$ is a unitary in $M(I_1)$ (see e.g. \cite[Theorem 8.1.29]{SWMonotoneComplete}) that implements $\alpha_1$ on $I_1$. In other words, the restriction $\alpha_1|_{I_1}$ is inner. 

        In summary, we have found a nontrivial ideal $I \triangleleft A$ such that the restriction $\alpha_1|_I$ is inner, i.e. $\alpha_1|_I = \Ad_v$ with $v \in M(I)$. Further, if $\Gamma = \Z/n\Z$, then $v^n = 1$. This implies that
        \[ I \rtimes_{\alpha,\lambda} \Gamma \isoto I \tensor C^*_\lambda(\Gamma) \]
        via the *-isomorphism sending $a \tensor \lambda_k$ to $a v^k \tensor \lambda_k$ for $a \in I$ and $k \in \Gamma$. We will now utilize this *-isomorphism to construct intermediate C*-algebras that are not reduced crossed products of partial subactions of $\alpha$.

        First consider the case when $\Gamma = \Z/n\Z$. In this case, the sum
        \[ p = \frac{1}{n} \sum_{k \in \Gamma} \lambda_k \in C^*_\lambda(\Gamma) \]
        is a nontrivial rank one projection. Then $I \tensor p$ is the ideal in $I \tensor C^*_\lambda(\Gamma)$ consisting of elements with constant Fourier coefficients. In particular, the Fourier coefficients of a nonzero element in this ideal are all nonzero. Pulling back to $I \rtimes_{\alpha,\lambda} \Gamma$, we see that that an element of the form $\sum a_k \tensor \lambda_k \in I \otimes p$ corresponds to $\sum av^{-k} \lambda_k \in I \rtimes_{\alpha,\lambda} \Gamma$. Let $K \triangleleft I \rtimes_{\alpha,\lambda} \Gamma$ be the image of $I \otimes p \triangleleft I \otimes \ca_\lambda(\Gamma)$ under this pullback. From above, if an element in $K$ is nonzero, then all of its Fourier coefficients are nonzero. 

        Now $K$ is also an ideal in $A \rtimes_{\alpha,\lambda} \Gamma$, so $A+K$ is an intermediate C*-algebra for the inclusion $A \subseteq A \rtimes_{\alpha,\lambda} \Gamma$. Moreover, since $\card{\Gamma} \geq 3$, it follows from above that every element $b \in A+K \setminus A$ always has at least two nonzero Fourier coefficients. Therefore, $A +K$ is not the reduced crossed product of a partial subaction of $\alpha$.

        The argument for the case when $\Gamma = \Z$ is similar. Since $\ca_\lambda(\Gamma) \isoto \rC(\T)$,
        \[ I \rtimes_{\alpha,\lambda} \Gamma \isoto I \otimes \ca_\lambda(\Gamma) \isoto I \otimes \rC(\T). \]
        Pick a nontrivial closed subset $F \subseteq \T$ with nonempty interior, and let $K$ denote the kernel of the corresponding quotient *-homomorphism
        \[ I \rtimes_{\alpha,\lambda} \Gamma \isoto I \tensor \rC(\T) \isoto \rC(\T, I) \surjectsonto \rC(F, I). \]
        We claim that any nonzero element $b \in K$ must have infinitely many nonzero Fourier coefficients. Choose nonzero $b \in K$ and write $b \sim \sum_{k \in \Gamma} b_k \lambda_k$ in $I \rtimes_{\alpha,\lambda} \Gamma$, which corresponds to $b \sim \sum_{k \in \Gamma} b_k v^k e^{ik\theta}$ in $\rC(F,I)$, and suppose for the sake of contradiction that at most finitely many $b_k$'s are nonzero. Since $b \in K$, $\sum_{k \in \Gamma} b_k v^k e^{ik\theta} = 0$ for $\theta \in F$. Since $F$ has non-empty interior, it is easy to check that this implies $b_k v^k = 0$ for all $k \in \Gamma$. Hence $b = 0$, which gives a contradiction.

        In summary, we have shown that any nonzero element in $K$ has infinitely many nonzero Fourier coefficients. An argument similar to the one made above implies that $A + K$ is an intermediate C*-algebra for the inclusion $A \subseteq A \rtimes_{\alpha,\lambda} \Gamma$ that is not the reduced crossed product of a partial subaction of $\alpha$.
    \end{proof}

    We now proceed to the case when every non-trivial element in $\Gamma_K$ is of order $2$. We will require the following preliminary result, which highlights an interesting property of *-automorphisms of order $2$. Namely, if they are quasi-inner, then they restrict to an inner *-automorphism on an essential ideal. 

    \begin{lemma}
    \label{lem:order_two_inner}
        Let $\alpha : A \to A$ be a *-automorphism of a unital C*-algebra $A$ satisfying $\alpha^2 = \id_A$. If $\alpha$ is quasi-inner, then there is an essential $\alpha$-invariant ideal $I \triangleleft A$ with the property that $\alpha$ is inner on $I$, in the sense that $\alpha|_I = \Ad_u$ for some unitary $u$ in the multiplier algebra $M(I)$. Furthermore, we may require that $u^2 = 1$.
    \end{lemma}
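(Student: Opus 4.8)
The plan is to manufacture both the ideal and its implementing symmetry directly inside the injective envelope, and then to prove essentiality by exploiting the order-two hypothesis in an essential way.

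First I would extend $\alpha$ to the inner automorphism $\tilde{\alpha} = \Ad_w$ of $I(A)$ provided by quasi-innerness, where $w \in I(A)$ is a unitary. Since the extension to $I(A)$ is unique, $\tilde{\alpha}^2 = \widetilde{\alpha^2} = \id_{I(A)}$, so $\Ad_{w^2} = \id_{I(A)}$ and hence $w^2$ is a central unitary. As the center $\rZ(I(A))$ is a commutative monotone complete C*-algebra, it admits a square root exactly as in the order-$n$ root argument of Proposition~\ref{prop:counterexample_order_three}; choosing a central unitary $c$ with $c^2 = (w^2)^{-1}$ and replacing $w$ by $u_0 \defeq wc$, I may assume $u_0^2 = 1$, so $u_0 = u_0^*$ is a self-adjoint symmetry still implementing $\tilde{\alpha}$. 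I would then record the covariance relation $u_0 a = \alpha(a) u_0$ for $a \in A$, which follows from $\alpha(a) = u_0 a u_0$.

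Next, define $I \defeq \{a \in A : u_0 a \in A\}$. Using the covariance relation and $u_0^2 = 1$ it is routine to check that $I$ is a norm-closed two-sided $\alpha$-invariant ideal of $A$ (for a left factor one rewrites $u_0(ba) = \alpha(b)(u_0 a)$), and that $u_0 I \subseteq I$ and $I u_0 \subseteq I$. Consequently $u_0$ defines a double multiplier of $I$, i.e. a self-adjoint $u \in M(I)$ with $u^2 = 1$, and for $a \in I$ one has $u a u^* = u_0 a u_0 = \alpha(a)$, so $\alpha|_I = \Ad_u$. This already produces the required symmetry, and it remains only to prove that $I$ is essential.

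For essentiality I would show that $J \defeq I^\perp = \{a \in A : aI = 0\}$ vanishes. A short computation shows $J$ is again an $\alpha$-invariant ideal, so I may form its central support projection $z \in I(A)$, which is $\alpha$-invariant and hence commutes with $u_0$; set $p \defeq z(1+u_0)/2$ and $p' \defeq z(1-u_0)/2$, projections with $p + p' = z$ and $u_0 p = p$, $u_0 p' = -p'$. The decisive use of the order-two hypothesis is this: if $0 \ne a \in A_+$ is supported under $p$ (that is, $pa = a$), then $u_0 a = u_0 p a = pa = a \in A$, so $a \in I$; likewise any $a \in A_+$ supported under $p'$ satisfies $u_0 a = -a \in A$ and lies in $I$. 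Assuming $J \ne 0$, I would take $0 \ne b \in J_+$ and argue $pbp = 0$: otherwise one could find $0 \ne a \in A_+$ with $a \leq pbp \leq \|b\|\,p$, which then lies in $I$ by the previous sentence, while $b \in I^\perp$ forces $ba = 0$, hence $ba^{1/2} = 0$, and therefore $a^2 \leq a^{1/2}(pbp)a^{1/2} = a^{1/2}ba^{1/2} = 0$, a contradiction. The same argument gives $p'bp' = 0$. As $p,p'$ are self-adjoint, $pbp = 0$ yields $bp = 0$ (from $b^{1/2}p = 0$), and similarly $bp' = 0$, so $bz = b(p+p') = 0$; since $z$ is the central support of $J$ this forces $J = 0$, and essentiality of $I$ follows.

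The main obstacle is the descent step hidden inside the claim that a nonzero $pbp$ dominates a nonzero element of $A$: namely that every nonzero positive element of $I(A)$ dominates a nonzero positive element of $A$ (order density of $A$ from below in $I(A)$). I expect this to be the delicate point, to be extracted from the monotone completeness and regularity of the inclusion $A \subseteq I(A)$ — it is precisely this regularity that discards the ``singular'' positive masses which obstruct such a descent in, say, the bidual $A^{**}$. Everything else is elementary, and it is exactly the order-two hypothesis that makes positive elements of $A$ lying below $p$ or $p'$ fall automatically into $I$, which is what drives the essentiality.
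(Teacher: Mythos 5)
Your construction of the symmetry $u_0$ with $u_0^2=1$ and of the ideal $I=\{a\in A: u_0a\in A\}$, together with the verification that $I$ is a closed $\alpha$-invariant two-sided ideal on which $u_0$ acts as a self-adjoint unitary multiplier implementing $\alpha|_I$, coincides with the first part of the paper's proof. The divergence is in the essentiality step, and there you have a genuine gap --- the one you flag yourself. Your argument stands or falls on the claim that every nonzero positive element of $I(A)$ dominates a nonzero positive element of $A$, and you neither prove this nor can you cite it in the form you need. What the literature provides is (i) order density of $A$ in Hamana's regular monotone completion $\overline{A}\subseteq I(A)$, which does not apply here because your element $pbp$ involves the unitary $u_0\in I(A)$, which has no reason to lie in $\overline{A}$; and (ii) the weaker statement that every nonzero $x\in I(A)_+$ dominates a nonzero $a\in A_+$ in the Cuntz sense, $a=\lim_n r_nxr_n^*$. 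The Cuntz version is not enough for you: your argument needs the literal inequality $a\leq pbp$ precisely so that $a$ is forced into the corner under $p$ (giving $u_0a=u_0pa=pa=a\in A$, hence $a\in I$), and Cuntz subequivalence does not place $a$ in that corner. So as written the essentiality argument does not close, and everything downstream of ``otherwise one could find $0\neq a\leq pbp$'' is unsupported.

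The paper avoids any such descent from $I(A)$ into $A$ by a bootstrapping argument: assuming $L_0\defeq I^{\perp}\neq 0$, it passes to the unitization $L$ of $L_0$, observes that $\alpha|_L$ is again an order-two quasi-inner automorphism (implemented by $up$ on $I(L)=I(A)p$, where $p$ is the central support of $L_0$), and reruns the first part of the proof inside $L$ --- namely the crossed-product kernel argument, which produces a nonzero $\alpha$-invariant ideal of $L$ whose elements are carried into $L$ by right multiplication by $up$. Intersecting with $L_0$ yields a nonzero $b\in I^{\perp}$ with $bu\in A$, i.e.\ $b\in I\cap I^{\perp}=0$, a contradiction. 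Note that this same crossed-product step (a nonzero element $a+b\lambda_1$ of $\ker\pi\cap(A\rtimes_{\alpha,\lambda}\Z/2\Z)$ with $bu=-a\in A$) is also how the paper shows $I\neq 0$ in the first place. To repair your proof you would either have to establish the order-density claim for $I(A)$ itself, or replace the spectral decomposition $z=p+p'$ by an argument of this kind that only ever asks whether concrete elements of $A$ are carried into $A$ by $u_0$.
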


    \begin{proof}
        Several steps in the proof will be reminiscent of steps in the proof of Proposition~\ref{prop:counterexample_order_three}, so we will not go into as much detail here. 
        
        First, recall that $\alpha$ being quasi-inner on $A$ means that the unique extension of $\alpha$ to the injective envelope $I(A)$, which we will continue to denote by $\alpha$ for convenience, is inner, i.e. $\alpha = \Ad_u$ for a unitary $u \in I(A)$. Since $\Ad_{u^2} = \alpha^2 = \id$, $u^2 \in Z(I(A))$. Since $Z(I(A))$ is monotone complete and commutative, we can replace $u$ by a unitary in $I(A)$ satisfying $u^2 = 1$.

        We obtain a *-homomorphism $\pi : I(A) \rtimes_{\alpha,\lambda} \Z/2\Z \to I(A)$ satisfying $\pi|_{I(A)} = \id_{I(A)}$ and $\pi(\lambda_1) = u$, where we have written $\Z/2\Z = \{0,1\}$. Let $K = \ker \pi$. Clearly $K \ne 0$, and therefore and so the intersection $J = K \cap (A \rtimes_{\alpha,\lambda} \Z/2\Z)$ must also satisfy $J \ne 0$.

        Note that $J \cap A = 0$. In particular, there is an element in $J$ of the form $a + b \lambda_1$ with $b \ne 0$. Applying $\pi$ yields that $a + bu = 0$, and in particular, that $bu \in A$. Let
        \[ I = \setbuilder{b \in A}{bu \in A}. \]
        Then $I$ is clearly closed under left multiplication by elements in $A$. It is also closed under right multiplication by elements in $A$ since $bau = bu \alpha^{-1}(a)$ for $a \in A$ and $b \in I$. Hence $I$ is an ideal in $A$.

        There are additional properties of $I$ that are important. First, we claim that if $b \in I$, then $ub \in A$. This claim follows from having $b^*u \in A$, and then taking the adjoint, which gives $ub = u^*b \in A$. Second, we claim that $I$ is $\alpha$-invariant. To see this claim, note that for any $b \in I$, we have
        \[ \alpha(b)u = u^2\alpha(b)u = ub \in A. \]
        Hence, $\alpha(I) \subseteq I$. We can deduce that $\alpha(I) = I$ holds from the fact that $I \subseteq \alpha^{-1}(I)$, and the fact that $\alpha$ is of order two, meaning $\alpha^{-1} = \alpha$. Additionally, arguing as in the proof of Proposition~\ref{prop:counterexample_order_three} yields $Iu \subseteq I$ and $uI \subseteq I$.

        Finally, we will show that $I$ is an essential ideal. Supposing otherwise, let $L_0 = I^{\perp} \neq 0$ and note that $L_0$ is also $\alpha$-invariant. Let $p$ denote the central support projection for $L_0$ in $I(A)$ and let $L = \ca(L,p)$ be the minimal unitization of $L$. Then $L$ is a unital C*-algebra and the restriction $\alpha|_L$ is quasi-inner since $I(L) = I(A)p$ and $\alpha|_{I(L)} = \Ad_{up}$. Consequently, there is a nontrivial $\alpha$-invariant ideal $M \triangleleft L$ which is invariant under left/right multiplication by $up$.
        
        By essentiality of $L_0$ in $L$, the ideal $M \cap L_0 = ML_0 = L_0M$ is non-trivial and closed under left and right multiplication by $up$. However, any nonzero element in $M \cap L_0$ does not belong to $I$ since $L_0 = I^\perp$, contradicting the definition of $I$. It follows that $I$ is essential. 

        Applying \cite[Theorem 8.1.29]{SWMonotoneComplete}, we conclude that $u \in M(I)$.
    \end{proof}

    \begin{proposition}
        \label{prop:counterexample_klein_four}
        If every non-trivial element in $\Gamma_K$ is of order $2$ but $\card{\Gamma_K} \geq 3$, then there is an intermediate C*-algebra for the inclusion $A \subseteq A \rtimes_{\alpha,\lambda} \Gamma$ that is not the reduced crossed product of a partial subaction of $\alpha$. 
    \end{proposition}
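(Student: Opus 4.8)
The plan is to exploit the fact that a group in which every non-identity element has order $2$ is elementary abelian; since $\card{\Gamma_K} \geq 3$, the rigid stabilizer $\Gamma_K$ therefore contains a copy of the Klein four-group $V = \Z/2\Z \times \Z/2\Z$, generated by two commuting involutions $s,t$. Using Lemma~\ref{lem:intermediate_subalgebra_reductions}(1) to restrict to $V$ and Lemma~\ref{lem:intermediate_subalgebra_reductions}(2) to pass to the quotient by the ($V$-invariant) ideal $K$, I would reduce to the situation $\Gamma = V$ with $A$ unital and each $\alpha_g$, $g \in V \setminus \{e\}$, quasi-inner. Extending to the injective envelope, each $\tilde\alpha_g = \Ad_{u_g}$ for a unitary $u_g \in I(A)$, and, as in Lemma~\ref{lem:order_two_inner}, I can arrange $u_s^2 = u_t^2 = 1$. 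Setting $u_{st} = u_s u_t$, the element $z = u_s u_t u_s u_t \in \rZ(I(A))$ measures the failure of $u_s,u_t$ to commute, and a short computation using centrality of $z$ together with $u_s^2 = u_t^2 = 1$ gives $u_s z u_s = z^{-1} = z$, so that $z^2 = 1$ and $q = \tfrac12(1+z)$ is a $V$-invariant central projection.

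On the corner $q$ the unitaries commute, and on $1-q$ they anticommute. Using Proposition~\ref{prop:injective_envelope_of_central_quotient} together with Lemma~\ref{lem:intermediate_subalgebra_reductions}(2), I would pass to whichever of the central quotients $A/J_q$, $A/J_{1-q}$ is nonzero, and thereby assume $z \equiv 1$ (the commuting case) or $z \equiv -1$ (the anticommuting case). In either case, arguing as in the descent step of Lemma~\ref{lem:order_two_inner} (intersecting the essential invariant ideals produced for $s$, $t$, $st$ and using that $A \rtimes_{\alpha,\lambda} V \subseteq I(A) \rtimes_{\alpha,\lambda} V$ is essential by \cite[Theorem~3.4]{hamana85-injective_envelopes_equivariant}), I would obtain an essential $V$-invariant ideal $I \triangleleft A$ with self-adjoint unitaries $u_s,u_t \in M(I)$ implementing $\alpha_s,\alpha_t$ on $I$. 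These implement a (possibly projective) representation of $V$ with scalar cocycle, yielding an isomorphism $\Phi : I \rtimes_{\alpha,\lambda} V \isoto I \tensor C^*(V,\omega)$, where $C^*(V,\omega)$ is $\C^4$ in the commuting case and $M_2(\C)$ in the anticommuting case, and under which the $g$-th Fourier coefficient of $a\lambda_g$ is carried to the coordinate of $g$ in the natural unitary basis of $C^*(V,\omega)$.

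The counterexample is then $B = A + \Phi^{-1}(I \tensor S)$ for a suitably chosen C*-subalgebra $S \subseteq C^*(V,\omega)$. In the commuting case I take $S = \C p_\chi$ for a minimal projection $p_\chi = \tfrac14 \sum_g \overline{\chi(g)}\, v_g$ of $\C^4 = C^*(V)$; this is an ideal, so $\Phi^{-1}(I \tensor S)$ is an ideal of $A \rtimes_{\alpha,\lambda} V$ and $B$ is automatically a C*-algebra, exactly as in Proposition~\ref{prop:counterexample_order_three}. In the anticommuting case $M_2(\C)$ is simple, so no ideal of $I \rtimes_{\alpha,\lambda} V$ can work; instead I take the two-dimensional subalgebra $S = \C p \oplus \C(1-p)$ attached to a projection $p = \tfrac12(1 + n_1\sigma_1 + n_2\sigma_2 + n_3\sigma_3)$ all of whose Pauli coordinates $n_i$ are nonzero. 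A routine extension argument — using that $\Phi^{-1}(I \tensor S)$ is a closed $A$-subbimodule and subalgebra of the ideal $I \rtimes_{\alpha,\lambda} V$ and that $A \cap \Phi^{-1}(I \tensor S) = I$ — shows that $B$ is a closed intermediate C*-algebra, and $B \supsetneq A$ in both cases.

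Finally, I would show $B$ is not the reduced crossed product of a partial subaction by verifying that $B$ contains no nonzero element of the form $a_g \lambda_g$ with $g \neq e$: after subtracting its $A$-part such an element would lie in $\Phi^{-1}(I \tensor S)$, which forces the basis vector indexed by $g$ to belong to $S$. But $S$ was chosen to contain no nonzero scalar multiple of any nontrivial group-element unitary (the minimal projection $p_\chi$ has full support, and no Pauli matrix $\sigma_i$ lies in $\C p \oplus \C(1-p)$ once every $n_i \neq 0$). Since every reduced crossed product of a nontrivial partial subaction contains such a single-coefficient element while $B \supsetneq A$, the algebra $B$ cannot be of this form. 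The main obstacle is the anticommuting case: recognizing that the fiber is the \emph{simple} algebra $M_2(\C)$, so that the ideal-based construction of Proposition~\ref{prop:counterexample_order_three} necessarily fails, and that one must instead use a proper subalgebra in ``general position'' with respect to the three nontrivial group-element unitaries, together with the accompanying descent to an essential ideal carrying genuinely anticommuting self-adjoint multiplier unitaries.
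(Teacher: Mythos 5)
Your proposal is correct and follows essentially the same route as the paper: reduce to a Klein four-group acting quasi-innerly, invoke Lemma~\ref{lem:order_two_inner} to get order-two unitaries in the multiplier algebra of an essential invariant ideal, split into commuting and anticommuting cases via the central element $(u_su_t)^2$, and in the anticommuting case exploit the identification of $I \rtimes_{\alpha,\lambda} (\Z/2\Z \times \Z/2\Z)$ with $I \otimes M_2(\C)$ together with a projection in ``general position.'' The only cosmetic differences are that the paper performs the commuting/anticommuting splitting with a central projection in $M(J)$ rather than by passing to central quotients of $I(A)$, and in the anticommuting case it uses the single projection $\frac{1}{4}\left[\begin{smallmatrix}1 & \sqrt{3}\\ \sqrt{3} & 3\end{smallmatrix}\right]$ (whose $st$-Fourier coefficient happens to vanish) in place of your unital two-dimensional subalgebra, with the same effect.
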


    \begin{proof}
        It is not hard to prove that a group with the property that every non-trivial element has order $2$ is abelian. Hence $\Gamma_K$ is abelian. Let $s,t \in \Gamma_K$ be distinct nontrivial elements, and observe that they generate a subgroup isomorphic to $\Z/2\Z \times \Z/2\Z$. There, by appealing to Lemma~\ref{lem:intermediate_subalgebra_reductions} again, we may assume without loss of generality that $\Gamma = \Z/2\Z \times \Z/2\Z$, and that the action $\alpha$ is pointwise quasi-inner, i.e.\ $\alpha_g$ is quasi-inner for every $g \in \Gamma$.

        Applying Lemma~\ref{lem:order_two_inner}, we obtain essential ideals $I_s, I_t \triangleleft A$ with the property that for each $g \in \set{s,t}$, $I_g$ is $\alpha_g$-invariant, and $\alpha_g|_{I_g} = \Ad_{u_g}$ for some unitary $u_g \in M(I_g)$ satisfying $u_g^2 = 1$. Consider the intersection
        \[ J = (I_s \cap I_t) \cap \alpha_s(I_s \cap I_t) \cap \alpha_t(I_s \cap I_t) \cap \alpha_{st}(I_s \cap I_t). \]
        This is an intersection of finitely many essential ideals of $A$, and thus is an $\alpha$-invariant essential ideal. Moreover, for $g \in \set{s,t}$, since $J \subseteq I_g$ is an inclusion of essential ideals of $A$, there is an inclusion $M(I_g) \subseteq M(J)$. In particular, for $g \in \set{s,t}$, $u_g \in M(J)$ and $\alpha_g|_J = \Ad_{u_g}$. It follows from this that $\alpha_{st}|_J = \Ad_{u_s u_t}$. Note also that $J$ is $\alpha$-invariant.
        
        Since $\alpha_{(st)^2} = 1$, $(u_s u_t)^2 = z$ for some $z \in Z(M(J))$. We claim that $z^2 = 1$. To see this, first observe that $z = (u_s u_t)^2 = \alpha_s(u_t) u_t$, so we can write $\alpha_s(u_t) = z u_t$. Hence
        \[ u_t = u_s \alpha_s(u_t) u_s = u_s (z u_t) u_s = z \alpha_s(u_t) = z^2 u_t, \]
        giving $z^2 = 1$. Therefore, there is a projection $p \in Z(M(J))$ such that $zp = p$ and $z(1-p) = -(1-p)$. 

        The projections $p$ and $1-p$ are $\alpha$-invariant. Furthermore, $Jp$ is an essential ideal of $M(J)p$. Since $u_s p, u_t p \in M(J)p$, it follows that $u_s p, u_t p \in M(Jp)$. Similarly, $J(1-p)$ is an essential ideal of $M(J)(1-p)$ with $u_s (1-p), u_t (1-p) \in M(J(1-p))$.
        
        Suppose first that $p \ne 0$. Then letting $v_e = p$, $v_s = u_s p$, $v_t = u_t p$ and $v_{st} = u_s u_t p$, we obtain a unitary representation $v : \Z/2\Z \times \Z/2\Z \to U(M(Jp))$. From here, we can argue as in the proof of Proposition \ref{prop:counterexample_order_three} to obtain an intermediate C*-algebra for the inclusion $A \subseteq A \rtimes_{\alpha,\lambda} \Gamma$ that is not the reduced crossed product of a partial subaction of $\alpha$.

        Otherwise, if $p = 0$, then $1-p = 1$, and the unitaries $u_s$ and $u_t$ anticommute, i.e. $u_s u_t = - u_t u_s$. It is straightforward to verify that there is a *-isomorphism $\sigma : M(J) \rtimes_{\alpha|_{M(J)},\lambda} \Z/2\Z \times \Z/2\Z \to M_2(M(J))$ satisfying
        \[ \sigma(a) = \begin{bmatrix} a & 0 \\ 0 & a \end{bmatrix} \quad \text{for} \quad a \in M(J) \]
        and
        \[ \sigma(\lambda_s) = \begin{bmatrix} u_s & 0 \\ 0 & -u_s \end{bmatrix}, \quad \sigma(\lambda_t) = \begin{bmatrix} 0 & u_t \\ u_t & 0 \end{bmatrix}. \]
        Let
        \[ q = \frac{1}{4} \begin{bmatrix} 1 & \sqrt{3} \\ \sqrt{3} & 3 \end{bmatrix} \in M_2(\C) \subseteq M_2(M(J)). \]
        Then $q$ is a projection that commutes with $\sigma(M(J))$. Hence
        \[ r = \sigma^{-1}(q) \in M(J) \rtimes_{\alpha|_{M(J)},\lambda} \Gamma \subseteq A \rtimes_{\alpha,\lambda} \Gamma \]is a projection that commutes with $M(J)$. In fact,
        \[ r = \frac{1}{4} (2 \lambda_e - u_s^* \lambda_s + \sqrt{3} u_t^* \lambda_t). \] 
        Let $B$ denote the norm closure of $A + Jr$. It is easy to verify that $B$ is a C*-algebra, and hence an intermediate C*-algebra for the inclusion $A \rtimes_{\alpha,\lambda} \Gamma$. For $b \in A + Jr$ and $g \in \Z/2\Z \times \Z/2\Z$,
        \[ \sqrt{3} E(b \lambda_s^*)u + E(b \lambda_t^*) v = 0, \]
        and so this is also true for $b \in B$. This implies that the $s$-th Fourier coefficient of $b$ is zero if and only if the $t$-th Fourier coefficient of $b$ is zero. It is clear that there are elements in $B$ with nonzero $s$-th Fourier coefficient. Hence $B$ is not the reduced crossed product of a partial subaction of $\alpha$.
    \end{proof}

    We have now collected the results we need to prove Theorem~\ref{thmintro:converse}, which claims that if there is a proper ideal $K \triangleleft A$ such that the corresponding rigid stabilizer $\Gamma_K$ satisfies $\card{\Gamma_K} \geq 3$, then there is an intermediate C*-algebra for the inclusion $A \rtimes_{\alpha,\lambda} \Gamma$ that is not the reduced crossed product of a partial subaction of $\alpha$. 

    \begin{proof}[Proof of Theorem~\ref{thmintro:converse}]
        If there is at least one element $t \in \Gamma_K$ of order at least $3$, then Proposition~\ref{prop:counterexample_order_three} yields the desired intermediate C*-algebra. Otherwise, if there is no element of order at least $3$, meaning that every nontrivial element of $\Gamma_K$ has order $2$, then the desired intermediate C*-algebra is provided by Proposition~\ref{prop:counterexample_klein_four}.
    \end{proof}

    \subsection{Noncommutative counterexamples}
    \label{sec:noncommutative_counterexamples}
    
    Theorem \ref{thmintro:converse} provides a (nearly) complete converse to Theorem \ref{thmintro:main}. The only case missing is when every non-trivial rigid stabilizer for the action is of order $2$. In this section, we will demonstrate that Theorem \ref{thmintro:converse} fails in this case.

    \begin{proposition}
    \label{prop:Z4_no_counterexample_subalgebras}
        Let $\Gamma = \Z/4\Z = \{0,1,2,3\}$ and let $\alpha : \Gamma \curvearrowright A$ be an action by $\Gamma$ on a simple unital C*-algebra $A$ by *-automorphisms such that $\alpha_1$ is properly outer and $\alpha_2$ is inner, say $\alpha_2 = \Ad_u$ for a unitary $u \in A$ satisfying $\alpha_1(u) = -u$. Then every intermediate C*-algebra for the inclusion $A \subseteq A \rtimes_{\alpha,\lambda} \Gamma$ is of the form $A \rtimes_{\alpha|_\Lambda, \lambda} \Lambda$ for a subgroup $\Lambda \subseteq \Gamma$.
    \end{proposition}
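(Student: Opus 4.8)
The plan is to use the inner symmetry $u$ to realize $A\rtimes_{\alpha,\lambda}\Gamma$ as a $2\times 2$ matrix algebra over $A$, and then to classify the intermediate algebras by hand, isolating the single genuinely analytic case at the end. After rescaling $u$ by a scalar (legitimate since $u^2$ is central, hence scalar by simplicity, and rescaling preserves $\alpha_1(u)=-u$) I may assume $u^2=1$. Set $c=u\lambda_2$. A direct computation gives $c=c^*$, $c^2=1$, $c$ commutes with $A$, and $\lambda_1 c=-c\lambda_1$; moreover $\lambda_2=uc$, so $D:=\mathrm{C}^*(A,\lambda_2)=\mathrm{C}^*(A,c)\cong A\oplus A$, with $A$ sitting diagonally and with central projections $e_\pm=\tfrac12(1\pm c)$. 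Since $\lambda_1 e_\pm=e_\mp\lambda_1$, the partial isometry $w=e_-\lambda_1 e_+$ implements $e_+\sim e_-$, while $e_+(A\rtimes_{\alpha,\lambda}\Gamma)e_+\cong A$; hence $A\rtimes_{\alpha,\lambda}\Gamma\cong M_2(A)$, with $A$ embedded as the twisted diagonal $a\mapsto\mathrm{diag}(a,\alpha_1^{-1}(a))$ and with $\beta:=\alpha_1^{-1}$ properly outer. Under this identification $D$ becomes the full diagonal $A\oplus A$, the element $c$ becomes $\mathrm{diag}(1,-1)$, and the three candidate subgroup crossed products $A$, $A\rtimes_{\alpha|_{\{0,2\}},\lambda}\{0,2\}$, $A\rtimes_{\alpha,\lambda}\Gamma$ correspond to the twisted diagonal, the diagonal $A\oplus A$, and all of $M_2(A)$.

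Now fix an intermediate algebra $B$. Since $B\cap D$ is intermediate for $A\subseteq D\cong A\oplus A$, an elementary argument using simplicity (any element of $B\cap D$ off the diagonal generates, as an ideal, a full summand) shows $B\cap D$ is either $A$ or $D$. If $D\subseteq B$, then the corner projections $e_{11},e_{22}$ lie in $B$, so $B$ inherits its Peirce decomposition $B=\bigoplus_{i,j}e_{ii}Be_{jj}$. Each corner is a closed $A$-sub-bimodule of a copy of ${}_AA_A$, and since ${}_AA_A$ has no closed sub-bimodules other than $0$ and $A$ (simplicity), every corner is $0$ or full; the diagonal corners are always full, and compatibility with the $*$-operation and multiplication then forces the two off-diagonal corners to be simultaneously $0$ or simultaneously full. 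Thus $B=D$ or $B=M_2(A)$, i.e. $B=A\rtimes_{\alpha|_{\{0,2\}},\lambda}\{0,2\}$ or $B=A\rtimes_{\alpha,\lambda}\Gamma$.

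It remains to treat the case $B\cap D=A$, where I must show $B=A$. Suppose instead that some $b\in B$ has nonzero odd (off-diagonal) part. The idea is to run the averaging machinery of Section~\ref{sec:free_action_correspondence}, but only against the properly outer automorphisms $\alpha_1,\alpha_3$: since every ratio $\alpha_g\alpha_h^{-1}$ with $g$ even and $h$ odd equals $\alpha_1$ or $\alpha_3$, the even Fourier coefficients can be separated from the odd ones, so an iteration of (the translated form of) Lemma~\ref{lem:average_one_element_to_expectation} should produce, inside the norm-closed absolutely $A$-convex hull of $b$ — hence inside $B$ — a nonzero purely odd element $x=x_1\lambda_1+x_3\lambda_3$. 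As $x^*x$ and $xx^*$ are even they lie in $B\cap D=A$, and the vanishing of their $\lambda_2$-coefficients, together with the relations coming from $xAx^*\subseteq A$ and the identity $\alpha_1(u)=-u$, should force $x_1=0$ or $x_3=0$ by simplicity. A single pure term $a\lambda_1\in B$ then generates $\lambda_1$, whence $\lambda_2=\lambda_1^2\in B$ and $D\subseteq B$, contradicting $B\cap D=A$.

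The crux, and the step I expect to be the main obstacle, is precisely this last case. The fundamental difficulty is that $\lambda_1$ and $\lambda_3$ cannot be disentangled: their ratio $\alpha_1\alpha_3^{-1}=\alpha_2$ is \emph{inner}, so no $A$-bimodule averaging can isolate a single odd Fourier coefficient and the clean coefficient-extraction of Corollary~\ref{cor:average_all_to_any_coefficient} is unavailable. One can only strip away the even part, and is left with a genuinely entangled purely odd element; extracting an exact (rather than merely approximate) such element from the closed convex hull, and then invoking the C*-identities together with simplicity to exclude the entangled case, is where the real work lies. This is exactly the mechanism by which the order-$2$ phenomenon evades the counterexample constructions of Propositions~\ref{prop:counterexample_order_three} and~\ref{prop:counterexample_klein_four}.
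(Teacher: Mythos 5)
Your structural reduction is attractive and genuinely different from the paper's argument, which works directly with Fourier coefficients and never introduces the matrix picture: the normalization $u^2=1$ is legitimate, the identification $A\rtimes_{\alpha,\lambda}\Gamma\cong M_2(A)$ via $c=u\lambda_2$ is correct, and your treatment of the case $D\subseteq B$ by Peirce decomposition plus simplicity, as well as the dichotomy $B\cap D\in\{A,D\}$, is complete and clean. The averaging reduction to a nonzero purely odd element $x=x_1\lambda_1+x_3\lambda_3\in B$ is also sound (your worry about exact versus approximate elements is unfounded, since $B$ is norm-closed and the relevant ratios $\alpha_{gh^{-1}}$ with $g$ even, $h$ odd are properly outer; this is exactly how the paper extracts $b_2\lambda_2\in B$). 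The problem is the endgame, which you correctly identify as the crux but for which the mechanism you propose is false, not merely unfinished.

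Concretely, the conditions $x^*x,\,xx^*,\,xax^*\in A$ for all $a\in A$ do \emph{not} force $x_1=0$ or $x_3=0$. Take $x=\lambda_1+iu\lambda_3$ with $u^2=1$. The $\lambda_2$-coefficient of $xax^*$ is $x_1\alpha_1(a)\alpha_2(x_3^*)+x_3\alpha_3(a)\alpha_2(x_1^*)=-i\alpha_1(a)u+iu\alpha_3(a)=-i\alpha_1(a)u+i\alpha_1(a)u=0$ for every $a$, and similarly the $\lambda_2$-coefficients of $x^*x$ and $xx^*$ vanish; yet both Fourier coefficients of $x$ are nonzero. No $A$-bimodule or C*-identity argument of this kind can separate $\lambda_1$ from $\lambda_3$, precisely because $\alpha_1\alpha_3^{-1}=\alpha_2$ is inner. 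What is actually true, and what the paper proves, is that these identities together with simplicity force $x$ (after normalizing $x_1=1$) into the rigid form $\lambda_1+\beta u\lambda_3$ with $\beta\in\C$; one then isolates $\lambda_1$ not by a bimodule operation but by the adjoint trick, which is where $\alpha_1(u)=-u$ genuinely enters: $x^*=\lambda_3-\overline{\beta}u^*\lambda_1$, so $x-\beta ux^*=(1+\abs{\beta}^2)\lambda_1\in B$, forcing $\lambda_1\in B$ and hence $D\subseteq B$, the desired contradiction. (Your example is consistent with this: $x-iux^*=2\lambda_1$.) Without this step your argument does not close, so the proof as proposed has a genuine gap in its hardest case.
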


    \begin{proof}
        Let $B$ be an intermediate C*-algebra for the inclusion 
        $A \subseteq A \rtimes_{\alpha,\lambda} \Gamma$, and suppose for the sake of contradiction that $B$ is not of the form $B = A \rtimes_{\alpha|_\Lambda, \lambda} \Lambda$ for a subgroup $\Lambda \subseteq \Gamma$.
        
        First, we claim that there is an element of the form $\sum_{k=0}^3 a_k \lambda_k \in B$ for $a_k \in A$ such that either $a_1 \ne 0$ or $a_3 \ne 0$. Assume otherwise, so that $B \subseteq A + A \lambda_2$. Then because $B \ne A$, there is an element of the form $a_0 + a_2 \lambda_2 \in B$ with $a_2 \ne 0$. Then subtracting off $a_0$ gives $a_2 \lambda_2 \in B$. From here, because $A$ is simple, it is easy to verify that $\lambda_2 \in B$, and it follows that $B = A + A\lambda_2 \isoto A \rtimes_{\alpha|_\Lambda,\lambda} \Lambda$, where $\Lambda = \langle 2 \rangle$, contradicting the assumption on $B$.

        Thus, we can choose $b = \sum_{k=0}^3 b_k \lambda_k \in B$ be an element with either $b_1 \neq 0$ or $b_3 \neq 0$. By subtracting off $b_0$, we can assume that $b_0 = 0$. We claim that $b_2 \lambda_2 \in B$. Indeed, a close examination of the proofs of 
        Proposition~\ref{prop:average_all_to_expectation} and Corollary~\ref{cor:average_all_to_any_coefficient} shows that if $\alpha_1$ (and hence $\alpha_1^{-1}$) are properly outer, then the facts that $\alpha_1^{-1} = \alpha_2 \alpha_3^{-1}$ and $\alpha_1 = \alpha_3 \alpha_2^{-1}$ yields $b_2 \lambda_2 \in B$. Hence we can further assume that $b_2 = 0$.

        If $b_1 = 0$, then $b = b_3 \lambda_3$, and in this case, since $A$ is simple and $3$ generates $\Z/4\Z$, it would follow that $B = A \rtimes_{\alpha,\lambda} \Gamma$, which would be a contradiction. Hence $b_1 \ne 0$. Similarly, $b_3 \ne 0$. Applying the simplicity of $A$ once again, we can further assume that $b_1 = 1$.

        For a unitary $w \in A$, consider the conjugation
        \[ w b \alpha_1^{-1}(w^*) = w \lambda_1 \alpha_1^{-1}(w^*) + w b_3 \lambda_3 \alpha_1^{-1}(w^*) = \lambda_1 + w b_3 \alpha_2(w^*) \lambda_3 \in B. \]
        If $w b_3 \alpha_2(w^*) \ne b_3$, then by subtracting $b$, we would obtain an element of the form $a \lambda_3 \in B$ for $0 \ne a \in A$, and we have already ruled out this possibility above. Therefore, for every unitary $w \in A$, $b_3 \alpha_2(w^*) = w^* b_3$, which is equivalent to $b_3 w = \alpha_2^{-1}(w) b_3$. Since $A$ is spanned by unitaries, this implies that for $a \in A$
        \[ b_3 a = \alpha_2^{-1}(a) b_3,\]
        and similarly that
        \[ b_3^* a = \alpha_2(a) b_3^*. \]
        It follows from this that $b_3^*b_3$ and $b_3 b_3^*$ belong to the center $Z(A)$ of $A$. However, $Z(A) = \C$ since $A$ is simple, so it follows that $b_3$ is a nonzero scalar multiple of a unitary $v \in A$ satisfying $\alpha_2^{-1} = \Ad_v$ and $\alpha_2 = \Ad_{v^*}$.

        By assumption, $\alpha_2 = \Ad_u$. Hence for $a \in A$, $a = \alpha_2^2(a) = v^*uau^*v$, implying $v^*ua = av^*u$. This implies $v^*u \in Z(A) = \C$, so $v$ is actually a nonzero scalar multiple of $u$. Hence $b = \lambda_1 + \beta u \lambda_3$ for $0 \ne \beta \in \C$. Taking the adjoint and using the fact that $\alpha(u) = -u$ gives
        \[ b^* = \lambda_3 + \overline{\beta} \alpha(u)^* \lambda_1 = \lambda_3 - \overline{\beta} u^* \lambda_1.\]
        Thus
        \[ b - \beta u b^* = (1 + |\beta|^2) \lambda_1 \in B, \]
        implying $\lambda_1 \in B$. Subtracting $\lambda_1$ from $b$ yields $b_3 \lambda_3 \in B$. But as we have already argued above, this is not possible, so we obtain the desired contradiction.
    \end{proof}

    \begin{example}
    \label{ex:Z4_no_counterexample_nontrivial}
        The above class of examples in Proposition~\ref{prop:Z4_no_counterexample_subalgebras} is not empty. Indeed, it follows from \cite[Proposition~1.6]{connes_hyperfinite_II1_automorphisms} that the hyperfinite $II_1$ factor $R$ admits an automorphism $\alpha$ with the required properties. Moreover, since $R$ is a $II_1$ factor, it follows from Dixmier's averaging property that $R$ is simple as a C*-algebra. We can even find a separable example by replacing $R$ with a separable C*-subalgebra using Blackadar's notion of \emph{separably inheritable} properties of C*-algebras. Specifically, since simplicity and $\alpha$-invariance are separably inheritable properties, there is a norm-separable C*-subalgebra $A \subseteq R$ that is simple, unital, $\alpha$-invariant, contains $u$ and is weak*-dense in $R$. See \cite[II.8.5.1-II.8.5.6]{blackadar_operator_algebras} for the relevant notions and results. Then note that $\alpha|_A$ is still properly outer.
    \end{example}

    \subsection{The commutative case}
    \label{sec:counterexample_constructions:commutative}

    In this section we will prove Theorem~\ref{thmintro:commutative}, which asserts that the complete converse of Theorem \ref{thmintro:main} holds for commutative C*-algebras. 

    Fix an action $\theta : \Gamma \curvearrowright X$ of a discrete group $\Gamma$ on a compact Hausdorff space $X$ and let $\alpha : \Gamma \curvearrowright \rC(X)$ denote the corresponding action of $\Gamma$ on the unital commutative C*-algebra $\rC(X)$ by *-automorphisms. Recall that the (pointwise) freeness of $\theta$ is equivalent to the pointwise residual proper outerness of $\alpha$.
    
    Suppose that $|\Gamma| \geq 3$ and that $\theta$ is not free, so that $\alpha$ is not pointwise residually properly outer. If there is a proper ideal $K \triangleleft \rC(X)$ such that the corresponding rigid stabilizer $\Gamma_K$ satisfies $\card{\Gamma_K} \geq 3$, then the results in Section \ref{sec:counterexample_constructions:noncommutative} yields an intermediate C*-algebra for the inclusion $\rC(X) \subseteq \rC(X) \rtimes_{\alpha,\lambda} \Gamma$ that is not the reduced crossed product of a partial subaction of $\alpha$. 

    Otherwise, it must be the case that for every proper ideal $K \triangleleft \rC(X)$ such that the corresponding rigid stabilizer $\Gamma_K$ is non-trivial, $\card{\Gamma_K} = 2$. Then in particular, for every point $x \in X$ such that the stabilizer $\Gamma_x = \{g \in \Gamma : \theta_g(x) = x\}$ is non-trivial, $\card{G_x} = 2$. The following result asserts that, in this situation, there is an intermediate C*-algebra for the inclusion $\rC(X) \subseteq \rC(X) \rtimes_{\alpha,\lambda} \Gamma$ that is of the desired form.
    
    \begin{proposition}
    \label{prop:converse_two_stabilizers}
        If there is $x \in X$ such that the corresponding stabilizer $\Gamma_x$ satisfies $\card{\Gamma_x} = 2$, then there is an intermediate C*-algebra for the inclusion $\rC(X) \subseteq \rC(X) \rtimes_{\alpha,\lambda} \Gamma$ that is not the reduced crossed product of a partial subaction of $\alpha$.
    \end{proposition}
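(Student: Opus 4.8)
The central object will be the projection coming from the order\=/two stabilizer. Since $\card{\Gamma_x}=2$, write $\Gamma_x=\{e,s\}$ with $s^2=e$, so that $\lambda_s$ is a self\-/adjoint unitary and $p:=\tfrac12(\lambda_e+\lambda_s)$ is a genuine projection in $\rC(X)\rtimes_{\alpha,\lambda}\Gamma$. Fix any $t\in\Gamma\setminus\{e,s\}$ (this set is nonempty since $\card\Gamma\ge 3$ in the ambient hypothesis). The plan is to build the intermediate C*-algebra directly from $p$, in the spirit of the projection $r$ used in Proposition~\ref{prop:counterexample_klein_four}, but now exploiting that the stabilizer acts on the one\-/dimensional fibre over $x$ merely by the scalar character, which is what makes the commutative order\-/two case behave. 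Concretely, I would set
\[
    M:=\overline{\rC(X)\,p\,(\rC(X)\rtimes_{\alpha,\lambda}\Gamma)\,p\,\rC(X)},\qquad B:=\overline{\rC(X)+M}.
\]
The appeal of this construction is that it requires neither the approximation property nor any reduction via Lemma~\ref{lem:intermediate_subalgebra_reductions}, and it will work uniformly whether or not $\rC(X)\rtimes_{\alpha,\lambda}\Gamma$ is simple.

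First I would check that $B$ is an intermediate C*-algebra. Here $M$ is by construction a norm\-/closed self\-/adjoint $\rC(X)$\-/bimodule, and it is closed under multiplication: a product of two generators $f_1py_1pf_1'\cdot f_2py_2pf_2'$ collapses using the identity $p\,g\,p=\tfrac12\bigl(g+\alpha_s(g)\bigr)p$ for $g\in\rC(X)$, which absorbs the inner $p(f_1'f_2)p$ back into $\rC(X)p$ and leaves an element of the form $f_1p\,(\,\cdot\,)\,pf_2'\in M$. Consequently $B=\rC(X)+M$ is a C*-algebra with $\rC(X)\subseteq B\subseteq \rC(X)\rtimes_{\alpha,\lambda}\Gamma$. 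For the witness element I would use $p\lambda_t p\in M\subseteq B$; expanding gives
\[
    p\lambda_t p=\tfrac14\bigl(\lambda_t+\lambda_{st}+\lambda_{ts}+\lambda_{sts}\bigr),
\]
whose $t$\-/th Fourier coefficient is a nonzero constant (the elements $st,ts$ are never equal to $t$, and the only possible further contribution is $\lambda_{sts}$ when $s,t$ commute). Thus $E\bigl((p\lambda_tp)\lambda_t^*\bigr)$ is a nonzero scalar multiple of the unit, and the corresponding Fourier piece is a nonzero multiple of $\lambda_t$.

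The crux — and the step I expect to be the main obstacle — is showing that this Fourier piece escapes $B$, i.e. that $\lambda_t\notin B$; this is exactly where the fixed point $x$ enters. I would use the covariant representation associated to $x$: on $\ell^2(\Gamma)$ put $(\pi_x(f)\xi)(g)=f(\theta_g x)\xi(g)$ and $\lambda_h\mapsto V_h$, left translation. A direct check shows $(\pi_x,V)$ is covariant (this uses $\theta_{s}x=x$ only implicitly through $\theta_{h^{-1}g}x$), and the resulting representation $\Phi=\pi_x\times V$ factors through $\rC(X)\rtimes_{\alpha,\lambda}\Gamma$, being the GNS representation of the state $\mathrm{ev}_x\circ E$. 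The key computation is that the vector $\xi_0:=\delta_e-\delta_s$ satisfies $\Phi(\lambda_s)\xi_0=V_s\xi_0=-\xi_0$ and $\Phi(f)\xi_0=f(x)\xi_0$ — precisely because $\theta_s x=x$ — and therefore $\Phi(p)\xi_0=0$. Setting $\xi_1:=\delta_t-\delta_{ts}$ (nonzero, and orthogonal to $\xi_0$ since $\{t,ts\}\cap\{e,s\}=\varnothing$), I would consider the bounded functional $\rho(b):=\langle\xi_1,\Phi(b)\xi_0\rangle$. Since $\Phi(p)\xi_0=0$ and $\Phi(f')\xi_0=f'(x)\xi_0$, every generator of $M$ is annihilated: $\rho(fpypf')=f'(x)\,\langle\xi_1,\Phi(f)\Phi(p)\Phi(y)\Phi(p)\xi_0\rangle=0$, so $\rho(M)=0$; and $\rho(f)=f(x)\langle\xi_1,\xi_0\rangle=0$ for $f\in\rC(X)$. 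Hence $\rho\equiv 0$ on $B$, whereas $\Phi(\lambda_t)\xi_0=V_t\xi_0=\delta_t-\delta_{ts}=\xi_1$ gives $\rho(\lambda_t)=\norm{\xi_1}^2\neq 0$. Therefore $\lambda_t\notin B$.

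Combining these, $B$ is a proper intermediate C*-algebra that is \emph{not} closed under taking Fourier coefficients: it contains $p\lambda_t p$ but not the scalar multiple of $\lambda_t$ which is the latter's $t$\-/th Fourier term. Since every reduced crossed product of a partial subaction is closed under passing to individual Fourier terms $b_g\lambda_g$, it follows that $B$ is not of that form, which is the desired conclusion. I would emphasize in the write\-/up that the only genuinely delicate point is the separation $\lambda_t\notin B$; the verification that $B$ is an algebra is routine bookkeeping with $p$, and the argument deliberately avoids the injective\-/envelope/invariant\-/ideal machinery of Proposition~\ref{prop:counterexample_order_three} and Proposition~\ref{prop:counterexample_klein_four}, since no nonzero invariant ideal need be available (e.g.\ when $\theta$ is minimal and topologically free, so that $\rC(X)\rtimes_{\alpha,\lambda}\Gamma$ is simple) — the fixed point $x$ alone, through the annihilating vector $\xi_0$, supplies everything.
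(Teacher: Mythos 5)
Your proof is correct, but it takes a genuinely different route from the paper's. The paper works with two distinct elements $r,s$ lying in the same non-trivial coset of $\Gamma_x$, chooses a neighbourhood $U$ of $x$ with $rU\cap U=\emptyset=sU\cap U$, and builds $B$ by hand as a space of elements supported on the seven group elements $\{e,r,s,r^{-1},s^{-1},r^{-1}s,rs^{-1}\}$ whose coefficients satisfy support conditions and the matching conditions $f_5(x)=f_6(x)$, $f_2(r^{-1}x)=f_3(s^{-1}x)$; closure under multiplication is then a case-by-case computation, and the obstruction is that a nonzero $r$-coefficient forces a nonzero $s$-coefficient. You instead exploit the order-two stabilizer directly through the projection $p=\tfrac12(\lambda_e+\lambda_s)$ and the corner-type bimodule $M=\overline{\rC(X)p(\rC(X)\rtimes_{\alpha,\lambda}\Gamma)p\,\rC(X)}$; the identity $pgp=\tfrac12(g+\alpha_s(g))p$ (valid because the $\alpha_s$-odd part of $g$ is killed by $p(\cdot)p$) makes the algebra verification a one-line computation rather than a case analysis, and the separation $\lambda_t\notin B$ is handled cleanly by the vector functional $\rho(\cdot)=\langle\delta_t-\delta_{ts},\Phi(\cdot)(\delta_e-\delta_s)\rangle$ in the GNS representation of $\mathrm{ev}_x\circ E$, using $\Phi(p)(\delta_e-\delta_s)=0$. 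I checked the delicate points: $\Phi$ does factor through the reduced crossed product since it is the GNS representation of a state of the form $\phi\circ E$; the $t$-th Fourier coefficient of $p\lambda_tp$ is the nonzero scalar $\tfrac14$ or $\tfrac12$; and reduced crossed products of partial subactions are indeed closed under passing to Fourier pieces $b_g\lambda_g$ (by continuity of $E$ and closedness of the ideals $I_g$), so $\lambda_t\notin B$ does yield the conclusion. Your approach buys a shorter, more conceptual argument whose only input from the hypothesis is the single fixed point $x$ and the involutivity of $s$, and it produces a larger, arguably more natural intermediate algebra; the paper's construction, while more laborious, exhibits the obstruction very concretely (the intermediate algebra is finitely supported over $\Gamma$ and the linkage between the $r$- and $s$-coefficients is visible by inspection). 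One small presentational point: you should state explicitly at the outset that $\card{\Gamma}\geq 3$ is in force (as it is in the ambient discussion of Section 4.3), since the choice of $t\in\Gamma\setminus\{e,s\}$ requires it — the paper's proof needs the same hypothesis to choose a non-trivial coset.
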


    \begin{proof}
        For the proof, it will be convenient to omit $\theta$ and write $\theta_g(x)$ as $gx$ for $g \in \Gamma$ and $x \in X$.
    
        Choose $r,s \in \Gamma \setminus \set{e}$ with the property that $r \neq s$, and $sr^{-1}$ is the nontrivial element of $\Gamma_x$. Equivalently, such that $r$ and $s$ are distinct elements of the same non-trivial left coset of $\Gamma_x \backslash \Gamma$. Note that $r^{-1}x = s^{-1}x$, $sr^{-1} = rs^{-1}$ and $r^{-1} s = s^{-1} r$. Also, $sx \ne x$ and $rx \ne x$.

        The key observation underlying the proof is that if $b = f_r \lambda_r + f_s \lambda_s \in \rC(X) \rtimes_{\alpha,\lambda} \Gamma$ for $f_r,f_s \in \rC(X)$ with the property that $f_r(x)$ and $f_s(x)$ are nonzero and equal, then it is impossible to obtain an element with nonzero $r$-th Fourier coefficient but zero $s$-th Fourier coefficient (and vice-versa) from $b$ using $\rC(X)$-bimodule operations. The difficult part is constructing a C*-algebra based on this observation.
        
        Let $U$ be an open neighbourhood of $x$ such that $rU \cap U = \emptyset$ and $sU \cap U = \emptyset$. By replacing $U$ with $U \cap rs^{-1} U$, we can further assume that $r^{-1} U = s^{-1} U$.

        Define $B$ by
        \[ B = \{f_1 \lambda_{r^{-1}s} + (f_2 \lambda_{r^{-1}} + f_3 \lambda_{s^{-1}}) + f_4 + (f_5 \lambda_r + f_6 \lambda_s) + f_7 \lambda_{rs^{-1}}\}, \]
        for $f_1,f_2,f_3,f_4,f_6,f_6,f_7 \in \rC(X)$ such that
        \begin{enumerate}
            \item $f_1$ is supported on $r^{-1}U = s^{-1}U$,
            \item $f_2$ and $f_3$ are supported on $r^{-1}U = s^{-1}U$ and satisfy $f_2(r^{-1}x) = f_3(s^{-1}x)$,
            \item $f_4$ has no restrictions
            \item $f_5$ and $f_6$ are supported on $U$ and satisfy $f_5(x) = f_6(x)$, and
            \item $f_7$ is supported on $U$.
        \end{enumerate}
        It is clear that $B$ is a subspace of $\rC(X) \rtimes_{\alpha,\lambda} \Gamma$. We will show that $B$ is an intermediate C*-algebra for the inclusion $\rC(X) \subseteq \rC(X) \rtimes_{\alpha,\lambda} \Gamma$ that is not the reduced crossed product of a partial subaction of $\alpha$. 

        It is not difficult to verify that $B$ is an operator system, i.e. that it is closed under the adjoint. To see that it is a *-algebra, first observe that above conditions on the $f_i$'s are closed under left multiplication by elements in $\rC(X)$. Hence $B$ is closed under left multiplication by elements in $\rC(X)$, and by the self-adjointness of $B$, under right multiplication by elements in $\rC(X)$.
        
        Next, suppose that $h_1,h_2,h_3,h_4,h_5,h_6,h_7 \in \rC(X)$ also satisfy the above conditions. Then
        \begin{align*}
            (f_5 \lambda_r + f_6 \lambda_s)(h_5 \lambda_r + h_6 \lambda_s) &= 0, \\
            (f_5 \lambda_r + f_6 \lambda_s) (h_7 \lambda_{rs^{-1}}) &= 0, \\
            (f_1 \lambda_{r^{-1}s}) (h_5 \lambda_r + h_6 \lambda_s) &= 0, \\
            (f_1 \lambda_{r^{-1}s}) (h_7 \lambda_{rs^{-1}}) &= 0,
        \end{align*}
        while on the other hand,
        \begin{align*}
            (f_2 \lambda_{r^{-1}} + f_3 \lambda_{s^{-1}}) (h_5 \lambda_r +      h_6 \lambda_s) &= k_4 + k_1 \lambda_{r^{-1}s}, \\
            (f_5 \lambda_r + f_6 \lambda_s) (h_2 \lambda_{r^{-1}} + h_3         \lambda_{s^{-1}}) &= k_4 + k_7 \lambda_{rs^{-1}}, \\
            (f_7 \lambda_{rs^{-1}}) (h_5 \lambda_r + h_6 \lambda_s) &= k_5      \lambda_r + k_6 \lambda_s, \\
            (f_5 \lambda_r + f_6 \lambda_s) (h_1 \lambda_{r^{-1}s}) &= k_5      \lambda_r + k_6 \lambda_s, \\
            (f_1 \lambda_{r^{-1}s})(h_1 \lambda_{r^{-1}s}) &= k_4,\\
            (f_7 \lambda_{rs^{-1}})(h_7 \lambda_{rs^{-1}}) &= k_4,
        \end{align*}
        for $k_1,k_2,k_3,k_4,k_5,k_6,k_7 \in \rC(X)$ satisfying the above conditions. Note that we are not claiming that the same values of $k_i$ are obtained for distinct lines above. Applying the self-adjointness of $B$ again, we conclude that $B$ is a *-algebra.

        It is clear that $B$ is an intermediate *-algebra for the inclusion $\rC(X) \subseteq \rC(X) \rtimes_{\alpha,\lambda} \Gamma$. It remains to show that $B$ is closed, but not the reduced crossed product of a partial subaction of $\alpha$.
        
        Note that in the definition of $B$, we have not ruled out the possibility that distinct formal words in $\{r,r^{-1},s,s^{-1}, r^{-1}s, rs^{-1}\}$ evaluate to the same element in $\Gamma$. However, it is easy to check that, writing $b \in B$ as $b = \sum_{i=1}^7 f_i \lambda_{g_i}$, and canonically viewing $f_i \lambda_{g_i} \in C_c(X \times G)$, then the supports of each $f_i \lambda_{g_i}$ on $X \times G$ are disjoint.
        
        It follows from the previous paragraph that the convergence of a net $(b^{(k)})$ of elements in $B$ is equivalent to the convergence of the corresponding net of functions $(f_i^{(k)})$ from the definition of $B$ for $1 \leq i \leq 7$, and it follows easily from this that $B$ is closed. Hence $B$ is an intermediate C*-algebra. Further, it follows from the previous paragraph and the definition of $B$ that if $b \in B$ has a nonzero Fourier coefficient at $r$, then it also has a nonzero Fourier coefficient at $s$. Therefore, $B$ is not the reduced crossed product of a partial subaction of $\alpha$.
    \end{proof}

   We are now ready to prove Theorem~\ref{thmintro:commutative}, which establishes a necessary and sufficient condition for every intermediate C*-algebra for the inclusion $\rC(X) \subseteq \rC(X) \rtimes_{\alpha,\lambda} \Gamma$ to be the reduced crossed product of a partial subaction of $\alpha$.

    \begin{proof}[Proof of Theorem~\ref{thmintro:commutative}]
        If $\Gamma$ has the approximation property and either $\card{\Gamma} \geq 2$ or $\theta$ is free, then every intermediate C*-algebra for the inclusion $\rC(X) \subseteq \rC(X) \rtimes_{\alpha,\lambda} \Gamma$ is the reduced crossed product of a partial subaction of $\alpha$ by Theorem \ref{thmintro:main}.

        Conversely, suppose $\card{\Gamma} \geq 3$ and $\theta$ is not free. Then $\alpha$ is not pointwise residually properly outer. If there is $x \in X$ such that the stabilizer $\Gamma_x$ satisfies $\card{\Gamma_x} \geq 3$, then Theorem \ref{thmintro:converse} implies the existence of an intermediate C*-algebra for the inclusion $\rC(X) \subseteq \rC(X) \rtimes_{\alpha,\lambda} \Gamma$ that is not the reduced crossed product of a partial action of $\alpha$. Otherwise, since $\theta$ is not free, $\card{\Gamma_x} = 2$ for some $x \in X$, and an intermediate C*-algebra that is not the reduced crossed product of a partial subaction of $\alpha$ is provided by Proposition~\ref{prop:converse_two_stabilizers}.
    \end{proof}

	\bibliographystyle{amsalpha}
	\bibliography{intermediate_subalgebras}
\end{document}